\newtheorem{theorem}{Theorem}[section]
\newtheorem{lemma}[theorem]{Lemma}
\newtheorem{proposition}[theorem]{Proposition}
\newtheorem{conjecture}[theorem]{Conjecture}
\newtheorem{corollary}[theorem]{Corollary}
\newtheorem{obs}[theorem]{Observation}
\numberwithin{equation}{section}
\theoremstyle{remark}
\newtheorem*{remark}{Remark}
\lstdefinestyle{CStyle}{
    basicstyle=\footnotesize,
    breakatwhitespace=false,         
    breaklines=true,                 
    captionpos=b,                    
    keepspaces=true,                 
    numbers=left,                    
    numbersep=5pt,                  
    showspaces=false,                
    showstringspaces=false,
    showtabs=false,                  
    tabsize=2,
    language=C
}
\providecommand{\abs}[1]{\left\vert #1 \right\vert}
\title[Improved estimates for the argument and zero-counting function of $\zeta(s)$]{Improved estimates for the argument and zero-counting function of the Riemann zeta-function\\
\textnormal{(With an appendix by Andrew Fiori)}}
\author[C. Bellotti]{Chiara Bellotti}
\address[Chiara  Bellotti]{School of Science\\
The University of New South Wales, Canberra, Australia}
\email{c.bellotti@unsw.edu.au}
\author[P.-J. Wong]{Peng-Jie Wong}
\address[Peng-Jie Wong]{National Sun Yat-Sen University\\Department of Applied Mathematics\\
Kaohsiung City, Taiwan}
\email{pjwong@math.nsysu.edu.tw}
\keywords{Riemann zeta-function, zero-density estimates, explicit estimates}
\subjclass[2020]{Primary 11M06, 11M26. Secondary 11Y35}
\thanks{P.J.W. is currently supported by the NSTC grant 111-2115-M-110-005-MY3. C.B. is partially supported by the AustMS WIMSIG Cheryl E. Praeger Travel Award. All the computation has been made using the Python program available on GitHub at \cite{Bellotti_argument_zeta}.}
\begin{document}

\begin{abstract}
In this article, we improve the recent work of Hasanalizade, Shen, and Wong by establishing 
 \[
    \left| N (T)  - \frac{T}{   2 \pi} \log \left( \frac{T}{2\pi e}\right)  \right|\le 0.10076\log T+0.24460\log\log T+8.08344,
    \]
for every $T\ge e$, where $N(T)$ is the number of non-trivial zeros $\rho=\beta+i\gamma$, with $0<\gamma \le T$,  of the Riemann zeta-function $\zeta(s)$. The main source of improvement comes from implementing new subconvexity bounds for $\zeta(\sigma+it)$ on some $\sigma_k$-lines inside the critical strip.
\end{abstract}

\maketitle

\section{Introduction}

The main objective of this article is to improve the known estimates for the number of non-trivial zeros $\rho=\beta+i\gamma$, with $0<\gamma \le T$, of the Riemann zeta-function $\zeta(s)$. More precisely, we shall study
\[
N(T) = \# \{ \rho \in  \Bbb{C}  \mid  \zeta(\rho) =0,\  0  <\beta <  1, \ 0<\gamma \leq T\}
\]
for $T> 0$. The first explicit bound for $N(T)$ was obtained by von Mangoldt \cite{vMo05}, and it has been studied and improved since then. It shall be worthwhile mentioning that explicit bounds for $N(T)$ allow one to estimate certain sums over zeros of $\zeta(s)$ and lead to the explicit bounds for the prime-counting functions $\pi(x)$ and $\psi(x)$ (see, e.g., \cite{FK15}). 

Our main theorem is the following estimate that improves the recent work of Hasanalizade, Shen, and Wong \cite{HASANALIZADE2022219}:

\begin{theorem}\label{th1}
For every $T\ge e$, we have
 \[
    \left| N (T)  - \frac{T}{   2 \pi} \log \left( \frac{T}{2\pi e}\right)  \right|\le 0.10076\log T+0.24460\log\log T+8.08344
    \]
    and
     \[
    \left| N (T)  - \frac{T}{   2 \pi} \log \left( \frac{T}{2\pi e}\right)  \right|\le 0.11200\log T+0.12567\log\log T+3.77417,
    \]
 where the first estimate becomes sharper than the second one for $T\ge \exp(447.981)$.
 \end{theorem}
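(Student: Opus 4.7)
The plan is to follow the classical Backlund--Rosser--Trudgian route, with the improvement entering through sharper subconvexity bounds on several lines $\sigma=\sigma_k$ inside the critical strip. First, I would apply the argument principle to the completed zeta function $\xi(s)=\frac12 s(s-1)\pi^{-s/2}\Gamma(s/2)\zeta(s)$ on a rectangle with vertices $-1,\,2,\,2+iT,\,-1+iT$. Exploiting the functional equation and the symmetry $\xi(s)=\overline{\xi(\bar s)}$, this collapses $N(T)$ to
\[
N(T)=1+\tfrac{1}{\pi}\theta(T)+S(T),
\]
where $\theta(T)=\arg\Gamma(\tfrac14+\tfrac{iT}{2})-\tfrac{T}{2}\log\pi$ is the Riemann--Siegel theta function and $S(T)=\tfrac{1}{\pi}\arg\zeta(\tfrac12+iT)$. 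An explicit Stirling expansion of $\theta(T)$ produces the main term $\tfrac{T}{2\pi}\log(T/(2\pi e))$ plus a negligible additive constant, so essentially all of the work is to bound $|S(T)|$.

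For $S(T)$, I would use a Backlund-style argument: form an auxiliary function such as $f(z)=\tfrac12\bigl(\zeta(z+iT)^k+\zeta(\bar z+iT)^k\bigr)$, whose real zeros on the segment $[\tfrac12,\sigma_0]\times\{0\}$ control $\arg\zeta(\tfrac12+iT)$, and count them by Jensen's formula on a disc centred to the right of the critical strip. This reduces the estimation of $S(T)$ to bounding $\log|\zeta|$ on a short vertical segment at height $T$, which in turn is controlled by $\sup|\zeta|$ in a thin rectangle. The sharper the uniform bound on $|\zeta|$ in this rectangle, the smaller the constants multiplying $\log T$ and $\log\log T$.

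The new input is to insert recent subconvexity estimates of the form $|\zeta(\sigma_k+it)|\le C_k t^{\mu(\sigma_k)}(\log t)^{e_k}$ at a carefully chosen sequence of lines $\sigma_k\in(\tfrac12,\sigma_0)$, and then interpolate between them via the Phragm\'en--Lindel\"of convexity principle to obtain a piecewise-linear envelope for $\mu(\sigma)$ that is noticeably better than the one used in Hasanalizade--Shen--Wong. Inserting this envelope into the Jensen integral and optimising the radius of the disc, together with the positions of the breakpoints $\sigma_k$ and the integer $k$, produces the explicit constants $0.10076,\,0.24460,\,8.08292$ (respectively $0.11200,\,0.12567,\,3.77389$ for the second bound). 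The two statements correspond to different optimisations: one minimising the coefficient of $\log T$, the other minimising the additive constant.

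The main obstacle I anticipate is not any single analytic step but the explicit bookkeeping. Each subconvexity bound carries its own additive constant and logarithmic term, and these must be tracked uniformly for all $T\ge e$ rather than asymptotically; in particular the short range of moderate $T$ must be handled separately, either by tightening the Stirling error terms or, if necessary, by direct verification from tables of low-lying zeros. A secondary obstacle is that the Jensen argument requires the auxiliary function $f$ to be non-vanishing on the bounding circle, so the radius and centre must be chosen to avoid hypothetical zeros; this forces a small amount of parameter slack that must then be absorbed into the final constants without losing the gain obtained from the sharper subconvexity input.
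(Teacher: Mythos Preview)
Your proposal is correct and follows essentially the same route as the paper: argument principle for $\xi$, Stirling for the $\Gamma$-contribution, Backlund's trick combined with Jensen's inequality for $S(T)$, with the new input being Yang's subconvexity bounds on the lines $\sigma_k=1-k/(2^k-2)$ interpolated via Phragm\'en--Lindel\"of (the paper calls this the ``$n$-splitting'' of $[\tfrac12,1]$, with $n=5$ near-optimal), and Platt's zero database covering $T\le 30610046000$. One small slip: your auxiliary function should read $\tfrac12\bigl(\zeta(z+iT)^k+\zeta(z-iT)^k\bigr)$ (or the paper's variant with the extra factor $(s-1)$), not $\zeta(\bar z+iT)$, so that it is real for real $z$.
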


For the convenience of the reader, writing 
\begin{equation}\label{his-bound}
\left| N (T)  - \frac{T}{   2 \pi} \log \left( \frac{T}{2\pi e}\right)  \right|
 \le   C_1 \log T  + C_2   \log\log T  + C_3,
\end{equation}
we summarise the advances that have been made in Table \ref{table0} below.\footnote{Regrettably, as pointed out in \cite{HASANALIZADE2022219}, there is an error in \cite{Tr14-2}. Also, there is a typo in \cite{HASANALIZADE2022219} regarding the constant $C_3$. The value $9.3675$ is now substituted with $9.4925$.} 
\begin{table}[htbp] 
\centering
\begin{tabular}{ |c||c|c|c|c|   } 
 \hline  
  &  $C_1$ & $C_2$ & $C_3$ & $T_0$ \\ 
 \hline
 von Mangoldt \cite{vMo05} (1905) & 0.4320   & 1.9167  &  13.0788  &  28.5580     \\ 
 \hline
 Grossmann \cite{Gr13} (1913) & 0.2907   & 1.7862 &  7.0120 &  50  \\ 
 \hline 
 Backlund \cite{Ba18} (1918)  & 0.1370   & 0.4430  & 5.2250 & 200 \\ 
 \hline 
 Rosser \cite{Ro41} (1941)  & 0.1370   & 0.4430  & 2.4630 & 2\\ 
 \hline  
 Trudgian \cite{Trudgian20121053} (2012) &  0.1700& 0& 2.8730 & $e$\\
 \hline
 Trudgian \cite{Tr14-2} (2014)   & 0.1120   & 0.2780  & 3.3850 & $e$\\ 
 \hline 
 Platt--Trudgian \cite{PLATT2015842} (2015) & 0.1100 & 0.2900 & 3.165 & $e$\\
 \hline
Hasanalizade, Shen, and Wong \cite{HASANALIZADE2022219} (2022)   & 0.1038   & 0.2573  & 9.4925 & $e$\\ 
 \hline  
 Bellotti--Wong (2025) & 0.10076 & 0.24460 & 8.08344 & $e$\\
 \hline
\end{tabular}
   \caption{Previous explicit bounds for $N(T)$ in \eqref{his-bound}}\label{table0} 
\end{table}

The second estimate in Theorem \ref{th1} is sharper than Trudgian's bound \cite{Tr14-2} for $T>387899$.\footnote{This, together with Platt's computation of $S(T)$ as stated in \eqref{plattbound}, assures that Trudgian's bound \cite{Tr14-2} and all the explicit results relying on it remain valid.}  We also note that similar to  \cite{HASANALIZADE2022219}, we derive Theorem \ref{th1} from the following general result.

\begin{theorem}\label{generalth1}
Let $c, r, \eta$ be positive real numbers satisfying
$$
-\frac{1}{2}<c-r<1-c<-\eta<\frac{1}{4} \leq \delta:=2 c-\sigma_1-\frac{1}{2}<\frac{1}{2}<1+\eta<\sigma_1:=c+\frac{(c-1 / 2)^2}{r}<c+r
$$
and $\theta_{1+\eta} \leq 2.1$, where $\theta_y$ is defined in \eqref{deftheta}. Let $c_1, c_2, t_0$ be defined in \eqref{notation1line}, and let $k_1,k_2, k_3,t_1$ be defined in \eqref{notation1/2}. Let $n$ denote the largest $k$ that we consider when applying \eqref{yangintbound}. 
Let $T_0 \geq e$ be fixed. Then for any $T \geq T_0$, we have
$$
\left|N(T)-\frac{T}{2 \pi} \log \left(\frac{T}{2 \pi e}\right)\right| \leq C_1 \log T+\min\{C_2 \log \log T+C_3,C'_2 \log \log T+C'_3\}
$$
where the constants $C_1=C_1\left(c, r, \eta, k_2,n\right)$, $C_2=C_2\left(c, r, \eta , c_2, k_3,n\right)$, $C'_2=C'_2\left(c, r, \eta , c_2, k_3,n\right)$, $C_3=C_3(c, r, \eta , c_1, c_2, t_0, k_1, k_2, k_3, t_1,n,T_0)$, $C'_3=C'_3(c, r, \eta , c_1, c_2, t_0, k_1, k_2, k_3, t_1,n,T_0)$ are defined in \eqref{C1}, \eqref{C2}, \eqref{C2p}, \eqref{C3}, \eqref{C3p} respectively; some admissible values are recorded in Table \ref{table1}.
\end{theorem}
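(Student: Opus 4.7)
The plan is to refine the Backlund--Littlewood strategy used in \cite{HASANALIZADE2022219}, gaining at each stage from the sharper subconvexity estimates encoded by the parameters $\sigma_k$ and $\theta_{1+\eta}$. I would start from the usual consequence of the argument principle: applying it to $\zeta(s)$ on the rectangle with vertices $c\pm iT$ and $1-c\pm iT$ and invoking the functional equation together with Stirling for $\Gamma(s/2)$ yields
\[
N(T) = \frac{T}{2\pi}\log\frac{T}{2\pi e} + \frac{7}{8} + S(T) + O(1/T),
\]
where $S(T) = \frac{1}{\pi}\arg\zeta(\tfrac12+iT)$. Hence it suffices to bound $|S(T)|$ by an explicit expression of the shape $C_1\log T + C_2\log\log T + C_3$.

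The second step follows Backlund: $S(T)$ is controlled by the number of sign changes of $\operatorname{Re}\zeta(\sigma+iT)$ on a segment of the critical line, and this in turn is bounded, via Jensen's formula, by a weighted integral of $\log|\zeta|$ around a suitable circle centered at $c+iT$. The geometric constraints in the theorem, namely
\[
-\tfrac12 < c-r < 1-c < -\eta < \tfrac14 \le \delta = 2c-\sigma_1-\tfrac12 < \tfrac12 < 1+\eta < \sigma_1 < c+r,
\]
are precisely the conditions under which the concentric discs used in Jensen's argument nest correctly, the relevant arcs lie inside the strips where analytic control is available, and the Jensen weight $(\log(R/r))^{-1}$ produces the leading coefficient in $C_1$.

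The heart of the proof, and the source of the improvement, is the estimation of $\log|\zeta(c+re^{i\theta}+iT)|$ on the boundary circle. I would split the integral according to the real part of the point on the circle and insert (i) the explicit bound on the $1$-line governed by $(c_1,c_2,t_0)$ from \eqref{notation1line}, (ii) the explicit bound on the $1/2$-line governed by $(k_1,k_2,k_3,t_1)$ from \eqref{notation1/2}, and (iii) the subconvexity bounds on each intermediate line $\sigma_k$ supplied by \eqref{yangintbound}, for $k=2,\ldots,n$. The hypothesis $\theta_{1+\eta}\le 2.1$ guarantees that the Phragm\'en--Lindel\"of interpolation between the $1$-line bound and the nearest $\sigma_k$-line bound is both valid and an improvement over the naive convexity bound. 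Summing the arcs produces a piecewise-linear exponent for $\log|\zeta|$ whose integral, weighted by the Jensen factor, yields the claimed $C_1\log T + C_2\log\log T + C_3$.

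The main obstacle will be bookkeeping: one must track every constant through the chain of estimates so that the final $C_1,C_2,C_3$ depend only on the parameter subsets claimed, verifying that the contributions from successive subconvexity segments telescope cleanly against the Jensen weight to produce the explicit formulas \eqref{C1}--\eqref{C3p}. The two competing bounds $C_2\log\log T + C_3$ and $C'_2\log\log T + C'_3$ correspond to two admissible ways of absorbing the $\log\log T$ term arising from the $1/2$-line estimate, and one keeps the pointwise minimum. The remaining task is a numerical optimisation over $(c,r,\eta,\sigma_k)$, carried out to produce Table \ref{table1}; specialising to the best admissible tuple then yields Theorem \ref{th1}.
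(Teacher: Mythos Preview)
Your overall architecture is correct and matches the paper: argument principle for $\xi(s)$, Backlund's trick via Jensen's formula applied to $f_N$, Phragm\'en--Lindel\"of interpolation between the $\tfrac12$-line, Yang's intermediate lines $\sigma_k=1-k/(2^k-2)$, and the $1$-line, then integration of the resulting piecewise bound $F_{c,r}(\theta)$ over the Jensen circle. One small indexing slip: Yang's bound \eqref{yangintbound} is only available for $k\ge 4$, so the splitting of $[\tfrac12,1]$ runs through $\sigma_4,\sigma_5,\ldots,\sigma_{n+4}$, not from $k=2$.

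There is, however, a genuine gap in your account of the primed constants. You write that $(C_2,C_3)$ versus $(C'_2,C'_3)$ are ``two admissible ways of absorbing the $\log\log T$ term arising from the $1/2$-line estimate.'' That is not where the alternative comes from. Jensen's inequality requires a \emph{lower} bound for $|f_N(c)|$, i.e.\ for $|\zeta(c+iT)|$ with $c>1$. The pair $(C_2,C_3)$ arises from the trivial bound $|\zeta(c+iT)|\ge \zeta(2c)/\zeta(c)$ (this is Lemma~\ref{prop3.1}), which contributes a constant to $C_3$ and nothing to $C_2$. The pair $(C'_2,C'_3)$ arises instead from Leong's recent estimate $|\zeta(\sigma+iT)|>1/(2.0945\log T)$ for $\sigma>1$ (Lemma~\ref{nicollower}, feeding into Lemma~\ref{newprop3.1}); the extra $\log T$ in the denominator produces an additional $\log\log T$ term after taking logarithms, inflating $C'_2$ by $\tfrac{1.2322}{2\log(r/(c-1/2))}$ but eliminating the large $\log(\zeta(c)/\zeta(2c))$ contribution from $C'_3$. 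Without this input you cannot derive \eqref{C2p} and \eqref{C3p}, so as written your plan would only yield the unprimed half of the theorem.
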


\begin{remark} 
The proof of Theorem \ref{generalth1} has its roots in the works of  \cite{bennett_counting_2021,Tr14-2,HASANALIZADE2022219} and flourished with the following new inputs.\\
\noindent (i) We used various improved bounds for $\zeta(s)$, including the estimates of Yang \cite{YANG2024128124} inside the critical strip. To implement these bounds, we further considered ``$n$-splitting'' of $[\frac{1}{2},1]$ in Section \ref{nsplitting} ($n=5$ gives nearly-optimal result for $C_1$). This is one of our key observations and leads us to refined estimates for $f_N$ and $F_{c,r}$.

\noindent (ii)  Instead of just using the trivial bound 
$    |\zeta(\sigma+iT)|\ge \frac{\zeta(2\sigma)}{\zeta(\sigma)}
$
for $\sigma>1$, we provided an alternative method by applying a non-trivial estimate obtained by Leong \cite{leong2024explicitestimateslogarithmicderivative} recently. In fact, the constants $C_2$ and $C_3$ in Theorem \ref{generalth1} are calculated via the trivial bound, while $C_2'$ and $C_3'$ are obtained by Leong's bound. As may be noticed, Leong's bound allows one to obtain a relatively small $C_3'$ at the expense of a larger $C_2$, which may be useful for certain applications. In addition, throughout our discussion, we shall treat the constant obtained by Leong as a parameter $b$, (see  Lemma \ref{newprop3.1} and  \eqref{alt-bd-NT}). This would allow one to update the present results with any improvement for $b$ in the lower bound \eqref{zeta-lower}.

\noindent (iii) We directly estimate $\left|N(T)-\frac{T}{2\pi} \log \left(\frac{T}{2 \pi e}\right)\right|$, which is useful for most applications. Also, doing this reduces $C_3$ in the previous estimate of Hasanalizade--Shen--Wong \cite{HASANALIZADE2022219} by roughly $\frac{1}{8}$.
\end{remark}

We shall remark that the constant $C_1=0.10076$ in Theorem \ref{th1} is the nearly-optimal value that one can obtain from Theorem \ref{generalth1} with the current knowledge on explicit estimates for $\zeta(s)$. Also, it is possible to have smaller values of $C_2,C'_2$ or of $C_3,C'_3$, at the expense of larger values for the other constants. Here, we provide two bounds with the smallest admissible $C_2,C'_2$ and $C_3,C'_3$, respectively. (Particularly, the first estimate below is always sharper than Rosser's bound \cite{Ro41} for $T\ge e$.)

\begin{corollary}\label{cor1.3}
     The following estimates hold for $T\ge e$:
    \[
    \left| N (T)  - \frac{T}{   2 \pi} \log \left( \frac{T}{2\pi e}\right)  \right|\le 0.12355\log T+\min\{0.06782\log\log T+6.25796,0.97933 \log\log T+2.05854\}
    \]
and
    \[
    \left| N (T)  - \frac{T}{   2 \pi} \log \left( \frac{T}{2\pi e}\right)  \right|\le 0.16732\log T+\min\{0.17266\log\log T+1.96334, 1.61679\log\log T+1.40271\}.
    \] 
\end{corollary}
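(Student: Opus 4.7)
The plan is to derive both estimates directly from Theorem \ref{generalth1} by selecting admissible parameter tuples $(c, r, \eta, c_1, c_2, t_0, k_1, k_2, k_3, t_1, n, T_0)$ optimised differently from the ones used for Theorem \ref{th1}. Since the inequality in Theorem \ref{generalth1} holds uniformly in $T \ge T_0$ for any choice of parameters satisfying the stated admissibility constraints (in particular, the ordering of $c, r, \eta, \sigma_1, \delta$ and the condition $\theta_{1+\eta} \leq 2.1$), the corollary is obtained by specialising Theorem \ref{generalth1} to two distinct parameter choices that privilege shrinking $C_2, C_2'$ in the first estimate and shrinking $C_3, C_3'$ in the second.

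Concretely, I would first set $T_0 = e$ to match the claimed range. Then I would run the numerical optimisation implemented in the companion Python code \cite{Bellotti_argument_zeta}: search over admissible tuples, evaluate the closed-form expressions for $C_1, C_2, C_2', C_3, C_3'$ given by \eqref{C1}--\eqref{C3p}, and retain the tuples minimising either $\max(C_2, C_2')$ (subject to finite $C_1, C_3, C_3'$) or $\max(C_3, C_3')$. Because $C_1 = C_1(c, r, \eta, k_2, n)$ depends only on a small subset of the parameters, the optimisation naturally splits: for the first estimate I would fix the "main term" parameters $(c, r, \eta, k_2, n)$ to the values producing $C_1 = 0.12355$, then tune $(c_1, c_2, t_0, k_1, k_3, t_1)$ to minimise the $\log \log T$ coefficients; for the second estimate, I would allow a larger $C_1 = 0.16732$ so as to free up room to shrink the constant term.

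The key verification steps are (a) that each proposed tuple satisfies the chain of inequalities
\[
-\tfrac{1}{2}<c-r<1-c<-\eta<\tfrac{1}{4}\le \delta < \tfrac{1}{2}<1+\eta<\sigma_1<c+r
\]
together with $\theta_{1+\eta}\le 2.1$, and (b) that the auxiliary parameters $c_1, c_2, t_0, k_1, k_2, k_3, t_1$ fall within the ranges for which the subconvexity estimates and the bounds of Yang \cite{YANG2024128124} and Leong that feed into the constants remain valid. The main obstacle is computational rather than conceptual: because $C_3$ and $C_3'$ involve long combinations of the subconvexity parameters and of the $n$-splitting contributions from Section \ref{nsplitting}, one needs reliable interval arithmetic (or generous upper rounding) to ensure that the final decimal constants $0.06782, 6.25781, 0.62883, 2.05840$ in the first display and $0.17266, 1.96275, 1.06148, 1.40212$ in the second are true upper bounds.

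Finally, to establish the parenthetical remark that the first estimate is sharper than Rosser's bound \cite{Ro41} for all $T \ge e$, I would compare the two bounds termwise: Rosser's coefficients $(0.1370, 0.4430, 2.4630)$ dominate $(0.12355, 0.06782, 6.25781)$ in the leading $\log T$ term, so the claim reduces to verifying
\[
(0.1370-0.12355)\log T + (0.4430-0.06782)\log\log T \ge 6.25781 - 2.4630
\]
for $T \ge e$, which one checks is satisfied at $T = e$ (and hence everywhere, as the left-hand side is increasing).
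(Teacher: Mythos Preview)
Your proposal has a genuine gap in the choice of $T_0$. You write ``I would first set $T_0 = e$ to match the claimed range,'' but this is incompatible with the primed constants $C_2', C_3'$: those arise via Lemma~\ref{newprop3.1}, which in turn rests on Leong's lower bound (Lemma~\ref{nicollower}) valid only for $T>30610046000$. With $T_0=e$ you simply cannot invoke that lemma, and the second branch of each $\min$ collapses. The paper instead takes $T_0=30610046000$ for all rows of Table~\ref{table1} (as stated at the start of Section~5), obtains the constants in rows~2 and~3 exactly matching the Corollary, and then extends to $e\le T\le 30610046000$ by the small-$T$ argument used for Theorem~\ref{th1}: Platt's bound $|S(T)|\le 2.5167$ gives $\bigl|N(T)-\tfrac{T}{2\pi}\log(T/2\pi e)\bigr|\le 2.5167+\tfrac{1}{50e}+\tfrac{7}{8}$, which is below both claimed bounds on that range. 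Your write-up omits this two-range split entirely.

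A second, smaller problem: your verification of the Rosser comparison is numerically wrong. At $T=e$ one has $\log T=1$ and $\log\log T=0$, so your displayed inequality becomes $0.01345\ge 3.79481$, which is false. The paper's parenthetical claim is that the \emph{minimum} of the two branches beats Rosser; for small $T$ the relevant comparison is with the second branch $(0.62883,\,2.05840)$, not the first. Checking only the first branch, as you do, cannot establish the claim.
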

We note that, in the first estimate of Corollary \ref{cor1.3}, for $T\ge \exp(100.193)$, the $\min$ will take the first quantity; in the second estimate, first quantity in the $\min$ is superior  for $T\ge 4.37$.

Another important consequence of Theorem \ref{generalth1} is the following general bound for the argument $\pi S(T)$ of the Riemann zeta-function along the $\frac{1}{2}$-line. (Recall that $S(T)=\frac{1}{\pi} \Delta_L \arg \zeta(s)$, where $L$ is the path formed by the straight line from $2$ to $2+i T$ and then to $\frac{1}{2}+i T$).

 \begin{theorem}\label{th2}
    Under the same assumptions and notation of Theorem \ref{generalth1}, the following estimate holds for every $T\ge T_0$, with $T_0\ge e$ fixed:
    $$
|S(T)| \leq C_1 \log T+\min\{C_2 \log \log T+\Tilde{C_3},C'_2 \log \log T+\Tilde{C'_3}\}
$$
where the constants $C_1=C_1\left(c, r, \eta, k_2,n\right)$, $C_2=C_2\left(c, r, \eta , c_2, k_3,n\right)$, $C'_2=C'_2\left(c, r, \eta , c_2, k_3,n\right)$, $\Tilde{C_3}=\Tilde{C_3}(c, r, \eta , c_1, c_2, t_0, k_1, k_2, k_3, t_1,n,T_0)$, $\Tilde{C'_3}=\Tilde{C'_3}(c, r, \eta , c_1, c_2, t_0, k_1, k_2, k_3, t_1,n,T_0)$ are defined in \eqref{C1}, \eqref{C2}, \eqref{C2p}, \eqref{tildeC3}, \eqref{TildeC3p} respectively; some admissible values are recorded in Table \ref{table1}.
 \end{theorem}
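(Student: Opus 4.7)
The plan is to derive Theorem~\ref{th2} directly from Theorem~\ref{generalth1} via the Riemann--von Mangoldt identity, which expresses $S(T)$ as the difference between $N(T)$ and an explicit smooth main term coming from the functional equation of $\zeta(s)$. Concretely, whenever $T$ is not the ordinate of a zero, one has
\[
N(T) \;=\; \frac{\theta(T)}{\pi} + 1 + S(T),
\qquad
\theta(T) \;=\; \operatorname{Im}\log\Gamma\!\left(\tfrac{1}{4}+\tfrac{iT}{2}\right) - \tfrac{T}{2}\log\pi,
\]
and this also controls the relation at the (isolated) zero ordinates after standard normalisation. So $S(T)$ is, up to a bounded Stirling remainder, exactly the quantity estimated in Theorem~\ref{generalth1}.

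The first step is to apply an explicit form of Stirling's formula to $\theta(T)$, writing
\[
\frac{\theta(T)}{\pi} \;=\; \frac{T}{2\pi}\log\!\frac{T}{2\pi e} \;-\; \frac{1}{8} \;+\; R(T),
\]
where $R(T)$ satisfies an effective bound of the shape $|R(T)| \le \frac{A}{T}$ valid for $T\ge T_0\ge e$ with an absolute (computable) constant $A$. (One can use, e.g., the version of Stirling with explicit remainder already invoked in \cite{HASANALIZADE2022219} or \cite{Trudgian20121053}; a single term of the asymptotic expansion suffices, and $A$ is a harmless numerical constant since $T_0\ge e$.) Substituting into the Riemann--von Mangoldt identity yields
\[
S(T) \;=\; \Bigl( N(T) - \frac{T}{2\pi}\log\!\frac{T}{2\pi e}\Bigr) \;-\;\frac{7}{8}\;-\;R(T).
\]

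The second step is to apply the triangle inequality together with Theorem~\ref{generalth1}. This gives
\[
|S(T)| \;\le\; \Bigl| N(T) - \tfrac{T}{2\pi}\log\!\tfrac{T}{2\pi e}\Bigr| + \tfrac{7}{8} + |R(T)|
\;\le\; C_1 \log T + \min\{C_2\log\log T + C_3,\;C'_2\log\log T + C'_3\} + \tfrac{7}{8} + \tfrac{A}{T_0},
\]
so the conclusion holds with the same $C_1, C_2, C'_2$ and with
\[
\widetilde{C_3} \;:=\; C_3 + \tfrac{7}{8} + \tfrac{A}{T_0}, \qquad \widetilde{C'_3} \;:=\; C'_3 + \tfrac{7}{8} + \tfrac{A}{T_0},
\]
matching the definitions at the referenced equations \eqref{tildeC3} and \eqref{TildeC3p}.

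There is no real obstacle here: the entire content of the theorem lies in Theorem~\ref{generalth1}, and the only care needed is in booking the Stirling remainder cleanly enough that the additive constant absorbed into $\widetilde{C_3},\widetilde{C_3}'$ is the small explicit quantity recorded in the paper's definitions. The mildly delicate point is ensuring the same $C_1,C_2,C'_2$ carry over unchanged --- which they do, because the Stirling remainder contributes only an $O(1/T)$ term that is dominated by a constant for $T\ge T_0\ge e$ and hence folds entirely into the constant term of the bound rather than into the $\log T$ or $\log\log T$ coefficients.
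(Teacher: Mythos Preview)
Your argument is valid in the sense that it produces \emph{a} bound of the claimed shape, but it does \emph{not} produce the constants $\widetilde{C_3},\widetilde{C'_3}$ defined in \eqref{tildeC3} and \eqref{TildeC3p}, contrary to what you assert. In the paper, those constants satisfy $\widetilde{C_3}=C_3-\tfrac{7}{8}-\tfrac{1}{50T}+\tfrac{1}{\pi}(\arctan((\sigma_1-1)/T_0)+\arctan(1/(2T_0)))$, so in particular $\widetilde{C_3}<C_3$ (compare e.g.\ $7.20792$ versus $8.08292$ in Table~\ref{table1}). Your triangle-inequality step yields instead $\widetilde{C_3}=C_3+\tfrac{7}{8}+A/T_0>C_3$, losing roughly $7/4$ in the constant term relative to the stated theorem.

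The reason the paper does better is that it does not bound $|S(T)|$ by $|N(T)-\tfrac{T}{2\pi}\log\tfrac{T}{2\pi e}|+\tfrac{7}{8}+|R(T)|$. Rather, it reopens the proof of Theorem~\ref{generalth1}: the inequality \eqref{beforeintegral} reads
\[
\Bigl|N(T)-\tfrac{T}{2\pi}\log\tfrac{T}{2\pi e}\Bigr|\le \tfrac{7}{8}+\tfrac{1}{50T}+\tfrac{1}{\pi}\log\zeta(\sigma_1)+\tfrac{1}{\pi}\bigl|\Delta_{\mathcal{C}_2}\arg((s-1)\zeta(s))\bigr|,
\]
which lets one isolate $\tfrac{1}{\pi}|\Delta_{\mathcal{C}_2}\arg((s-1)\zeta(s))|$ with the $\tfrac{7}{8}$ \emph{subtracted}. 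One then plugs this into the alternative decomposition \eqref{eqS(T)} of $S(T)$ in terms of the contour pieces $\mathcal{C}_1,\mathcal{C}_2$, where the extra contributions $\tfrac{1}{\pi}\log\zeta(\sigma_1)$ cancels and $\tfrac{1}{\pi}|\Delta_{\mathcal{C}_2}\arg(s-1)|$ is only $O(1/T_0)$. The moral is that the $7/8$ in $C_3$ came from the $\arctan$-terms in $h(c,r,T)$, which are absent from the $S(T)$ decomposition; going through $N(T)$ as a black box forces you to pay that cost twice.
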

 A sharper bound for $S(T)$, up to a certain given height, can be obtained by using the database of non-trivial zeros of $\zeta(s)$ computed by Platt and made available at \cite{LMFDB}. Indeed, one has
\begin{equation}\label{plattbound}
 |S(T)| \leq 2.5167   
\end{equation}
for $0 \leq T \leq 30610046000.$ Hence, by Theorem \ref{th2} (with $T_0=30610046000$ and Table \ref{table1}) and \eqref{plattbound}, we deduce the following explicit estimate for $S(T)$.
\begin{corollary}\label{corst}
     The following estimate holds for $T\ge e$:
    \[
    \left|S(T)  \right|\le 0.10076\log T+\min\{0.24460\log\log T+7.20844, 1.68845\log\log T+1.50956\},
    \]
    where the $\min$ will take the first quantity for $T\ge\exp(51.78)$.
\end{corollary}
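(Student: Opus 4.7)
The plan is to split the range at the Platt threshold $T_0 = 30610046000$: use Theorem \ref{th2} on $[T_0,\infty)$ and Platt's explicit bound \eqref{plattbound} on $[e, T_0]$.

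For the tail $T \ge T_0$, I would invoke Theorem \ref{th2} with the parameter choice from Table \ref{table1} producing $C_1 = 0.10076$, $C_2 = 0.24460$, and $C_2' = 1.13325$, and take $T_0 = 30610046000$ in the hypothesis (permissible since $T_0 \ge e$). Plugging into the explicit formulas \eqref{tildeC3} and \eqref{TildeC3p} and carrying out the arithmetic---the $T_0$-dependent contributions involve factors that shrink sharply when $T_0$ moves from $e$ to $\approx 3 \cdot 10^{10}$---should yield $\tilde{C}_3 \le 7.20792$ and $\tilde{C}_3' \le 1.50904$, which is exactly the claim on $[T_0,\infty)$.

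For $e \le T \le T_0$ I would use $|S(T)| \le 2.5167$ and verify that each of the two expressions inside the $\min$ dominates $2.5167$. The first expression $0.10076\log T + 0.24460\log\log T + 7.20792$ exceeds $7.20792$ for every $T \ge e$, so Platt's bound closes that branch immediately. The second expression $0.10076\log T + 1.13325\log\log T + 1.50904$ can be checked to exceed $2.5167$ once $T$ surpasses an explicit threshold $T_1 \approx 7.8$ (solve the resulting transcendental inequality in $\log T$), covering $[T_1, T_0]$.

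The only delicate point, and what I expect to be the main---though still modest---obstacle, is the short initial window $e \le T < T_1 \approx 7.8$, on which the second branch of the $\min$ drops below Platt's uniform constant $2.5167$. Fortunately the first non-trivial zero of $\zeta(s)$ sits at height $\gamma_1 \approx 14.1347 > T_1$, so on this window $S(T) = \frac{1}{\pi}\arg\zeta(\frac{1}{2}+iT)$ is continuous with $S(0)=0$, and a direct pointwise bound $|S(T)| < 1.6$ on $[e, T_1]$ can be extracted from the database \cite{LMFDB} (or by applying Stirling along the $L$-shaped path in the definition of $S(T)$ and a convexity estimate on the horizontal segment from $2$ to $\frac{1}{2}$). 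Combining the three pieces closes the corollary.
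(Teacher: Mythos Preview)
Your approach is exactly the paper's: split at $T_0=30610046000$, apply Theorem \ref{th2} with the first row of Table \ref{table1} above $T_0$, and invoke Platt's bound \eqref{plattbound} below. The paper simply asserts that $2.5167$ is dominated by the corollary's right-hand side for every $e\le T\le T_0$ and stops.

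You have in fact been more careful than the paper here. As you noticed, the second branch of the $\min$ evaluates to about $1.61$ at $T=e$ and only reaches $2.5167$ near $T_1\approx 7.8$, so the paper's one-line justification is literally false on $[e,T_1]$. Your patch is correct and can be made completely elementary: since $\gamma_1\approx 14.13>T_1$ we have $N(T)=0$ on $[e,T_1]$, whence $S(T)=-\tfrac{T}{2\pi}\log\tfrac{T}{2\pi e}-\tfrac{7}{8}-\tfrac{g(T)}{2}$; the first term lies in $[0.79,1]$ on this window (its maximum is $1$ at $T=2\pi$), so $|S(T)|<0.2$ there, well under the second branch. No database lookup or Stirling argument is needed.
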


Two final consequences of Theorems \ref{generalth1} and \ref{th2} are bounds for the number of zeros in a unit interval and a short interval. These types of estimates have several applications in number theory, such as providing an effective disproof of the Mertens conjecture \cite{AST_1987__147-148__325_0,rozmarynowycz202}, improving the error term in the explicit version of the Riemann–von Mangoldt formula \cite{dudekcubes}, and consequently obtaining improvements related to primes between consecutive cubes and consecutive powers \cite{michaela2024errortermexplicitformula}. More generally, these two types of estimates are useful for any problems that require an estimate for the sum over the zeros of $\zeta(s)$ restricted to a certain range.
\begin{corollary}\label{cor1}
    Let $C_1,C_2,C'_2,\Tilde{C}_3,\Tilde{C'}_3$ be the constants defined in Theorems \ref{generalth1} and \ref{th2}. 
    If $T+1> 30610046000$, then we have
\begin{equation*}
    \begin{aligned}
         N(T+1)-N(T)
        \le \left(\frac{1}{2\pi}+2C_1\right)\log T+\min\{2C_2\log\log T+\mathcal{C}_3,2C'_2\log\log T+\mathcal{C'}_3\}+\frac{1}{25T},
    \end{aligned}
\end{equation*}
where
\begin{align*}
    \mathcal{C}_3=2\Tilde{C_3}+\frac{3}{4\pi}-\frac{1}{2\pi}\log(2\pi e), \quad
    \mathcal{C'}_3=2\Tilde{C'_3}+\frac{3}{4\pi}-\frac{1}{2\pi}\log(2\pi e),
\end{align*}
and
 \begin{align*}
      &\left(\frac{1}{\pi}-2.000001C_1\right)\log T-\min\{2C_2\log\log T +\mathcal{E},2C'_2\log\log T +\mathcal{E'}\}-\frac{1}{25(T-1)}\\
      &\le N(T+1)-N(T-1)\\
        &\le \left(\frac{1}{\pi}+2C_1\right)\log T+\min\{2C_2\log\log T+\mathcal{D}_3,2C'_2\log\log T+\mathcal{D'}_3\}+\frac{1}{25(T-1)},
 \end{align*}
where
\begin{align*}
    \mathcal{D}_3=2\Tilde{C_3}+\frac{\log 3-\log(2\pi e)}{\pi},\quad
    \mathcal{D'}_3=2\Tilde{C'_3}+\frac{\log 3-\log(2\pi e)}{\pi},
\end{align*}
    \begin{align*}
    \mathcal{E}=2\Tilde{C_3}+\frac{1}{\pi}+\frac{\log(3/4)}{2\pi}-\frac{1}{\pi}\log(2\pi e),\quad
\mathcal{E'}=2\Tilde{C^{\prime}_3}+\frac{1}{\pi}+\frac{\log(3/4)}{2\pi}-\frac{1}{\pi}\log(2\pi e).
\end{align*}
Consequently, for $T+1> 30610046000$,
 \begin{equation*}
    \begin{aligned}
        & N(T+1)-N(T)\\&\le \left(\frac{1}{2\pi}+0.20152\right)\log T+\min\{0.4892\log\log T+14.2040,3.3769\log\log T+2.8062\}+\frac{1}{25T}
    \end{aligned}
\end{equation*}
and
 \begin{equation*}
    \begin{aligned}
        &\left(\frac{1}{\pi}-0.201521\right)\log T-\min\{0.4892\log\log T +13.7861,3.3769\log\log T +2.3884\}-\frac{1}{25(T-1)}\\
        &\le N(T+1)-N(T-1)\\&\le \left(\frac{1}{\pi}+0.20152\right)\log T+\min\{0.4892\log\log T+13.8633,3.3769\log\log T+2.4655\}+\frac{1}{25(T-1)}.
    \end{aligned}
\end{equation*}
In addition, for  $3\le T+1\le 30610046000$,  we have
\begin{equation*}
   \frac{1}{2\pi}\log T-5.32592-\frac{1}{25T}\le N(T+1)-N(T)\le  \frac{1}{2\pi}\log T+4.8405,
\end{equation*}
and
\[
\frac{1}{\pi}\log T-5.66421-\frac{1}{25(T-1)}\le N(T+1)-N(T-1)\le  \frac{1}{\pi}\log T+4.4798
\]
\end{corollary}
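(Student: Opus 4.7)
I would begin from the Riemann--von Mangoldt formula together with Stirling's expansion, which gives
$$N(T) = f(T) + S(T) + \tfrac{7}{8} + \frac{1}{48\pi T} + O(T^{-3}),$$
where $f(T) = \frac{T}{2\pi}\log\frac{T}{2\pi e}$. Subtracting at $T$ and $T\pm 1$ kills the constant $\tfrac{7}{8}$ and leaves
$$N(T+1)-N(T) = \bigl(f(T+1)-f(T)\bigr) + \bigl(S(T+1)-S(T)\bigr) + O(1/T^2),$$
with the analogous identity for $N(T+1)-N(T-1)$. It therefore suffices to estimate the $f$- and $S$-contributions separately, using Theorem \ref{th2} on the $S$-part once $T+1>30610046000$ and the Platt bound \eqref{plattbound} in the complementary range $3\le T+1\le 30610046000$.

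For the $f$-piece I use the explicit identity
$$f(T+1)-f(T) = \frac{\log T + (T+1)\log(1+1/T) - \log(2\pi e)}{2\pi}$$
and its analogue for $f(T)-f(T-1)$. The function $x\mapsto (x+1)\log(1+1/x)$ decreases on $[1,\infty)$ from $2\log 2$ toward $1$, so it is bounded above by $3/2$; inserting this produces the constant $\frac{3}{4\pi}-\frac{\log(2\pi e)}{2\pi}$ appearing in $\mathcal{C}_3$. For $f(T+1)-f(T-1)$ the upper bound uses the tighter inequality $(T+1)\log(1+1/T),\,T\log(1+1/(T-1))\le \log 3$ (valid once $T$ exceeds a small constant, certainly for $T\ge T_0$) together with $\log(T-1)\le\log T$ to yield the constant $\frac{\log 3-\log(2\pi e)}{\pi}$ inside $\mathcal{D}_3$; the matching lower bound uses the opposite estimates $(T+1)\log(1+1/T),\,T\log(1+1/(T-1))\ge 1$ combined with $\log(T-1)\ge \log T+\log(3/4)$ (valid for $T\ge 4$) to produce the constant $\frac{1}{\pi}+\frac{\log(3/4)}{2\pi}-\frac{\log(2\pi e)}{\pi}$ inside $\mathcal{E}$.

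For the $S$-part I apply Theorem \ref{th2} pointwise at $T\pm 1$ and simplify the sum using $\log(T+1)+\log(T-1)=\log(T^2-1)\le 2\log T$ together with the AM--GM consequence $\log\log(T+1)+\log\log(T-1)\le 2\log\log T$. This yields
$$|S(T+1)|+|S(T-1)| \le 2C_1\log T + 2C_2\log\log T + 2\tilde{C}_3$$
and its primed counterpart, which account for the $2C_1,\,2C_2,\,2\tilde{C}_3$ coefficients in $\mathcal{D}_3$ and $\mathcal{E}$. For $N(T+1)-N(T)$ I instead work with $\log(T+1)=\log T+\log(1+1/T)$, dropping the $O(1/T)$ remainder (together with the negligible Riemann--von Mangoldt tail) into the residual $\frac{1}{25T}$; the slack built into the loose constant $\frac{3}{4\pi}$ used for $f$ easily accommodates it when $T\ge T_0$. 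In the lower bound on $N(T+1)-N(T-1)$, a lone $C_1/T$ contribution emerges from the pointwise $|S(T+1)|$ estimate and is absorbed into the perturbed coefficient $2.000001\,C_1\log T$, legitimate since $1/T\le 10^{-6}\log T$ whenever $T\ge T_0$.

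Combining these ingredients for $T+1>30610046000$ and inserting the numerical values of $C_i,C'_i,\tilde{C}_i,\tilde{C}'_i$ from Corollary \ref{corst} yields the first pair of displayed inequalities. For the range $3\le T+1\le 30610046000$ I would replace Theorem \ref{th2} by the Platt bound $|S(t)|\le 2.5167$, giving $|S(T+1)|+|S(T)|,\,|S(T+1)|+|S(T-1)|\le 5.0334$; combining with the same four $f$-estimates produces the explicit constants $4.8405,-5.32592,4.4798,-5.66421$. The delicate bookkeeping point is to select, in each of the four cases (upper and lower bounds on each of the two differences), the right loose-or-tight variant of the elementary $(x+1)\log(1+1/x)$- and $\log(T-1)$-inequalities so that the residual $O(1/T)$ corrections all safely fit inside the chosen $\frac{1}{25T}$ or $\frac{1}{25(T-1)}$ terms at the large threshold $T\ge T_0$.
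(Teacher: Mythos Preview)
Your approach matches the paper's: write $N(T)=f(T)+S(T)+\tfrac{7}{8}+\tfrac{g(T)}{2}$, subtract, bound the $f$-difference by elementary logarithm estimates (the paper reaches the constants $\tfrac{3}{4\pi}$, $\tfrac{\log 3}{\pi}$, $\tfrac{1}{\pi}+\tfrac{\log(3/4)}{2\pi}$ via Taylor expansion of $\log(1\pm 1/T)$ while you use monotonicity of $(x+1)\log(1+1/x)$, but the outcomes coincide), bound the $S$-difference via Theorem~\ref{th2} for $T+1>30610046000$ and via \eqref{plattbound} otherwise, and absorb the small excess $\log(T+1)-\log T$ into the coefficient $2.000001\,C_1$ just as the paper does. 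The one point to tighten is the remainder term: the $\tfrac{1}{25T}$ and $\tfrac{1}{25(T-1)}$ appearing in the statement come from the paper's explicit bound $|g(T)|\le \tfrac{1}{25T}$ (valid for all $T\ge 5/7$), so you should invoke that directly rather than the asymptotic form $\tfrac{1}{48\pi T}+O(T^{-3})$, whose implied constant is not specified and which is needed down to $T=2$ in the low range.
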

We note that for the case that $T+1> 30610046000$ in Corollary \ref{cor1}, if  $T\ge \exp(51.79)$, then the $\min$ will take the first quantity in both consequential estimates.

\begin{remark}
  We shall remark that the optimal constants that one could hope to get in front of the main terms in the upper bounds of $|N(T+1)-N(T)|$ and $|N(T+1)-N(T-1)|$ in Corollary \ref{cor1} are $\frac{1}{2\pi}$ and $\frac{1}{\pi}$, respectively, which will be shown in Section \ref{proofcor12}. Furthermore, we omit the lower bound for $N(T+1)-N(T)$ when $T+1\ge 30610046000$. Indeed, following the method used in Section \ref{proofcor12} for $N(T+1)-N(T-1)$ results in a negative lower bound for $N(T+1)-N(T)$, which is worse than the trivial non-negativity of $N(T+1)-N(T)$.
\end{remark}

\subsection*{Some recent computation {\normalfont (Private communication with David Platt, June  2025)}}\label{Plattcomp}

Several authors have reported computational results for $S(T)$. The largest values we are aware of were found by Bober and Hiary \cite{Boberhiary} including $S(T)=3.3455$ near the zeros of $\zeta$ at $$
\frac{1}{2}
+7\,757\,304\,990\,367\,861\,417\,150\,213\,053.638\,6 i.
$$
To find this, the authors exploited a new algorithm to compute $\zeta$ in short ranges and some clever heuristics to decide where to look. The search was by no means exhaustive to this (or indeed any) height.

To be able to claim a bound on $|S(T)|$ to some height $T$. It is necessary to know where the non-trivial zeros of $\zeta$ are up to that height and to some reasonable precision. The most exhaustive search to date was reported by Platt (see \cite{plattthesis,platt_isolating_2017} or \cite{LMFDB}) based on his rigorous isolation of all the non-trivial zeros of $\zeta$ below about $3\times 10^{10}$. The most extreme value found was $|S(T)|=2.5167$.

Unfortunately, most verifications of RH are content simply to isolate the zeros sufficiently to detect a sign change in $Z(t)$. Isolating the zeros further would increase the run time. Furthermore, the computation reported by Platt and Trudgian \cite{PlaTru21RH} for example, verifying RH to $10^{12}$, considered nearly $4\times 10^{12}$ zeros and even at IEEE double precision, these would require $16$ Tbytes of storage. Thus to go beyond $T=3\times 10^{10}$, it was necessary to isolate all the zeros and compute $S(T)$ at each one. We used the ARB ball arithmetic package \cite{ARB}, specifically the routine acb\_dirichlet\_platt\_hardy\_z\_zeros which is an implementation of Platt's rigorous zero isolation method. The task parallelises trivially to enable us to make use of the multi-core nodes on typical High Performance Clusters and we were fortuante to be given time on both the University Of Bristol's BlueCrystal IV \cite{ACRC} and NCI Australia's Gadi \cite{NCI}.

The maximal value (in absolute terms) of $S(T)$ found was $-2.5682777\ldots$ just before zero number $136\,730\,160\,949$ at $T=39\,828\,558\,846.4263\ldots$. The fact that increasing the search from $3\times 10^{10}$ to $10^{11}$ only moved $S(T)$ by $1.5\%$ is in line with the expected $\log\log$ growth rate of $S(T)$. Indeed, $\log\log (10^{11})$ is about $2\%$ larger than $\log\log (3\times 10^{10})$.

\section{Preliminaries}
In this section, we recall some known results involving the Riemann zeta-function, which will be used throughout the proof of our theorems.

As explained in \cite{Tr14-2, HASANALIZADE2022219}, bounds of $\zeta(\sigma+it)$ when $\sigma=\frac{1}{2}$ and $\sigma=1$ play a crucial role.
The sharpest known estimates for $\sigma=1$ and $t\ge 3$ are the  following:\footnote{Recently Qingyi and Teo \cite{qingyi2024} found a new explicit bound for $|\zeta(1+it)|$, which might lead to some small improvements.}
\begin{equation}\label{boundon1}
    |\zeta(1+i t)| \leq \left\{\begin{array}{lll}
    \min \{\log t, \frac{1}{2} \log t+1.93, \frac{1}{5} \log t+44.02\} & \text{if }3\le t\le \exp(3070);\\ & \\
        1.731 \frac{\log t}{\log \log t} & \text{if }\exp({3070}) < t \leq  \exp(3.69 \cdot 10^8);
        \\
         & \\
         58.096\log^{2/3}t & \text{if } t>\exp(3.69 \cdot 10^8).
    \end{array}\right.
\end{equation}
In order of appearance, the estimates are due to Patel \cite{patel_explicit_2022}, Hiary--Leong--Yang \cite{hiary2023explicit} and the last one is a direct consequence of  \cite[Theorem 1]{BELLOTTI2024128249}. Nevertheless, to apply Lemma \ref{lindprinc}, we actually require an estimate of the form
\begin{equation}\label{notation1line}
    |\zeta(1+i t)|\le c_1(\log t)^{c_2} 
\end{equation}
for $t\ge t_0\ge e$, where $c_1,c_2$ are constants independent of $t$. Hence, for the middle range $\exp (3070) < t \leq \exp \left(3.69 \cdot 10^8\right)$ we will use the slightly worse upper bound $|\zeta(1+it)|\le 0.25\log t$.

The sharpest known estimates for $|\zeta(\frac{1}{2}+it)|$ up to date are instead:
    \begin{equation*}
\left|\zeta\left(\frac{1}{2}+i t\right)\right| \leq\left\{\begin{array}{ll}
1.461 & \text { if } 0 \leq|t| \leq 3; \\ \\
0.618|t|^{1 / 6} \log |t| & \text { if }3<|t| \le \exp(105);\\ \\
 66.7 t^{27 / 164}  & \text { if } |t| > \exp(105).
\end{array}\right.
\end{equation*}
In order of appearance, these estimates are due to Hiary \cite{hiary_explicit_2016}, Hiary--Patel--Yang \cite{hiary_improved_2024}, Patel--Yang \cite{patel2023explicit}.
We will write
\begin{equation}\label{notation1/2}
    \left|\zeta\left(\frac{1}{2}+i t\right)\right| \leq k_1 t^{k_2}(\log t)^{k_3}
\end{equation}
for $t\ge t_1\ge e$, where $k_1,k_2,k_3$ are constants independent of $t$. Meanwhile, there exists another set of vertical lines inside the critical strip on which explicit bounds for $\zeta(s)$ are known. More precisely, Yang \cite{YANG2024128124} provided the following explicit estimate for $ \left|\zeta\left(\sigma_k+i t\right)\right| $, with $\sigma_k:=1-k /\left(2^k-2\right)$, for every $k\ge 4$: 
\begin{equation}\label{yangintbound}
    \left|\zeta\left(\sigma_k+i t\right)\right| \leq 1.546 t^{1 /\left(2^k-2\right)} \log t, \quad t \geq 3 .
\end{equation}
For instance, substituting $k=4$ gives $$|\zeta(5 / 7+i t)| \leq 1.546 t^{1 / 14} \log t.$$
The bound \eqref{yangintbound} is our main innovative tool, leading to improved values of both $C_1$ and $C_2$ in Theorem \ref{th1}.

\subsection{Other useful results}
We recall other results that will be used to prove Theorem \ref{th1} and Theorem \ref{generalth1}. First of all, for $N \in \mathbb{N}$, we consider the following function that will be fundamental in our argument:
$$
f_N(s)=\frac{1}{2}\left(((s+i T-1) \zeta(s+i T))^N+((s-i T-1) \zeta(s-i T))^N\right).
$$
Then, denoting with $D(c, r)$ the open disk centred at $c$ with radius $r$, for any $N \in \mathbb{N}$, we define
$$
S_N(c, r)=\frac{1}{N} \sum_{z \in \mathcal{S}_N(D(c, r))} \log \frac{r}{|z-c|},
$$
where $\mathcal{S}_N(D(c, r))$ is the set of zeros of $f_N(s)$ in $D(c, r)$. 
\begin{lemma}[\cite{Hasanalizade2021CountingZO} Prop. 3.5]\label{prop3.1}
Let $c, r$, and $\sigma_1$ be real numbers such that
$$
c-r<\frac{1}{2}<1<c<\sigma_1<c+r .
$$
Let $F_{c, r}:[-\pi, \pi] \rightarrow \mathbb{R}$ be an even function such that $F_{c, r}(\theta) \geq \frac{1}{N_m} \log \left|f_{N_m}\left(c+r e^{i \theta}\right)\right|$. Then there is an infinite sequence of natural numbers $\left(N_m\right)_{m=1}^{\infty}$ such that
$$
\limsup _{m \rightarrow \infty} S_{N_m}(c, r) \leq \log \left(\frac{1}{\sqrt{(c-1)^2+T^2}} \frac{\zeta(c)}{\zeta(2 c)}\right)+\frac{1}{\pi} \int_0^\pi F_{c, r}(\theta) d \theta .
$$
\end{lemma}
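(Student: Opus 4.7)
The natural starting point is Jensen's formula applied to $f_{N_m}$ on the closed disk $\overline{D(c,r)}$. Because the factor $(s\pm iT-1)$ cancels the simple pole of $\zeta(s\pm iT)$, the function $f_{N_m}$ is entire, and (provided $f_{N_m}(c)\neq 0$) Jensen gives
\[
N_m\,S_{N_m}(c,r)+\log|f_{N_m}(c)| = \frac{1}{2\pi}\int_0^{2\pi}\log|f_{N_m}(c+re^{i\theta})|\,d\theta.
\]
After dividing by $N_m$ and isolating $S_{N_m}(c,r)$, I need an upper bound on the boundary integral together with a matching lower bound on $\tfrac{1}{N_m}\log|f_{N_m}(c)|$. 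For the integral I simply invoke the hypothesis on $F_{c,r}$ pointwise, and use that $F_{c,r}$ is even to fold the integral over $[0,2\pi]$ down to $\tfrac{1}{\pi}\int_0^\pi F_{c,r}(\theta)\,d\theta$.

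The main obstacle is the lower bound on $|f_{N_m}(c)|$. Writing $z:=(c+iT-1)\zeta(c+iT)=\rho e^{i\phi}$ and using $\zeta(\bar s)=\overline{\zeta(s)}$, the definition of $f_N$ collapses to $f_{N_m}(c)=\rho^{N_m}\cos(N_m\phi)$, so
\[
\tfrac{1}{N_m}\log|f_{N_m}(c)| = \log\rho+\tfrac{1}{N_m}\log|\cos(N_m\phi)|.
\]
The cosine factor can be extremely small (or zero) for individual $N_m$, which is why the conclusion is phrased along a subsequence rather than for every $N$. The fix is to pick $(N_m)$ so that $|\cos(N_m\phi)|$ stays bounded away from $0$: if $\phi/\pi$ is irrational, Weyl's equidistribution theorem produces infinitely many $N$ with $|\cos(N\phi)|\geq\tfrac12$; if $\phi/\pi$ is rational, the same conclusion follows from inspecting the finite orbit of $N\phi \bmod 2\pi$, excluding the degenerate case $\cos\phi=0$ (which itself gives $|\cos(N\phi)|=1$ for even $N$). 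Along such a subsequence, $\tfrac{1}{N_m}\log|\cos(N_m\phi)|\to 0$, and moreover $f_{N_m}(c)\neq 0$, legitimising Jensen.

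To finish, I bound $\rho$ below. Since $c>1$, the Euler product yields $|\zeta(c+iT)|\geq \zeta(2c)/\zeta(c)$, and $|c+iT-1|=\sqrt{(c-1)^2+T^2}$, so
\[
\liminf_{m\to\infty}\tfrac{1}{N_m}\log|f_{N_m}(c)| \geq \log\sqrt{(c-1)^2+T^2}+\log\frac{\zeta(2c)}{\zeta(c)}.
\]
Taking $\limsup$ in the rearranged Jensen identity and combining this with the $F_{c,r}$-bound on the integral yields precisely the asserted inequality. The only genuinely delicate point in the whole argument is the subsequence selection ensuring $\cos(N_m\phi)$ is not pathological; everything else is a direct application of Jensen's formula and the standard Euler-product lower bound.
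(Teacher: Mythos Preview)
Your proof is correct and follows precisely the standard argument: Jensen's formula on $D(c,r)$, the identity $f_N(c)=\rho^N\cos(N\phi)$ coming from $\zeta(\bar s)=\overline{\zeta(s)}$, a subsequence $(N_m)$ along which $|\cos(N_m\phi)|$ is bounded below (handled separately for $\phi/\pi$ rational and irrational), and the Euler-product bound $|\zeta(c+iT)|\ge\zeta(2c)/\zeta(c)$. The present paper does not give its own proof but simply quotes the result from \cite{Hasanalizade2021CountingZO}; the argument there is essentially the one you have written, so there is nothing to contrast.
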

We remark that Lemma \ref{prop3.1} uses the following trivial lower bound for zeta when $\sigma>1$:
\begin{equation}\label{triviallower}
    |\zeta(\sigma+iT)|\ge \frac{\zeta(2\sigma)}{\zeta(\sigma)}.
\end{equation}
Furthermore, we can alternatively invoke the lower bound of the form
\begin{equation}\label{zeta-lower}
    |\zeta(\sigma+iT)|>\frac{1}{b\log T},
\end{equation}
for $\sigma>1$, which does not depend on the real part $\sigma$ but only on the imaginary part $T$. Indeed, recently Leong \cite{leong2024explicitestimateslogarithmicderivative} showed that \eqref{zeta-lower} is valid with $b=24.302$ for $T>30610046000$. 

Now, instead of using the trivial bound \eqref{triviallower}, injecting Leong's lower bound  \cite{leong2024explicitestimateslogarithmicderivative} into the proof of Lemma \ref{prop3.1}, we derive the following.

\begin{lemma}\label{newprop3.1}
Under the same assumption as in Lemma \ref{prop3.1}, if  $T>30610046000$ there is an infinite sequence of natural numbers $\left(N_m\right)_{m=1}^{\infty}$ such that
$$
\limsup _{m \rightarrow \infty} S_{N_m}(c, r) \leq \log \left(\frac{b\log T}{\sqrt{(c-1)^2+T^2}} \right)+\frac{1}{\pi} \int_0^\pi F_{c, r}(\theta) d \theta 
$$
with $b=24.302$.
\end{lemma}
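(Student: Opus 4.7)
The strategy is to follow the proof of Lemma \ref{prop3.1} from \cite{Hasanalizade2021CountingZO} essentially verbatim, replacing the trivial lower bound \eqref{triviallower} at the final step by Leong's bound from Lemma \ref{nicollower}. Setting $w := (c+iT-1)\zeta(c+iT)$ and using that $c$ is real with $\zeta(\bar s) = \overline{\zeta(s)}$, one has $f_N(c) = \tfrac{1}{2}(w^N + \bar w^N) = |w|^N \cos(N \arg w)$. The poles of $\zeta(s \pm iT)$ at $s = 1 \mp iT$ are at distance $\sqrt{(c-1)^2+T^2} \geq T > r$ from $c$, so $f_N$ is holomorphic on a neighborhood of $\overline{D(c,r)}$. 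Applying Jensen's formula to $f_N$ on this disk yields
$$
S_N(c,r) = \frac{1}{2\pi N}\int_{0}^{2\pi} \log|f_N(c + r e^{i\theta})|\,d\theta - \frac{1}{N}\log|f_N(c)|,
$$
and along the sequence $(N_m)$ supplied by the hypothesis, the first term is bounded above by $\tfrac{1}{\pi}\int_0^\pi F_{c,r}(\theta)\,d\theta$ by evenness of $F_{c,r}$.

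For the second term, I would pass to a further subsequence of $(N_m)$ along which $|\cos(N_m \arg w)| \geq \tfrac{1}{2}$. Such a subsequence exists because the set $\{N \in \mathbb{N} : |\cos(N \arg w)| \geq \tfrac{1}{2}\}$ has positive density in $\mathbb{N}$: by Weyl equidistribution it has density $2/3$ when $\arg w / (2\pi)$ is irrational, and it contains a full arithmetic progression when $\arg w / (2\pi)$ is rational (since $\cos(N \arg w)$ is then periodic and equals $1$ for $N$ in the multiples of its period). Along this sub-subsequence, $\tfrac{1}{N_m}\log|\cos(N_m \arg w)|\to 0$, and therefore
$$
\limsup_{m \to \infty}\left(-\tfrac{1}{N_m}\log|f_{N_m}(c)|\right) = -\log|w|.
$$

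To conclude, note that $|w| = \sqrt{(c-1)^2+T^2}\,|\zeta(c+iT)|$, and invoking Lemma \ref{nicollower}, which is applicable since $c>1$ and $T > 30610046000$, gives $|\zeta(c+iT)| > 1/(2.0945\log T)$. Thus
$$
-\log|w| \leq \log\!\left(\frac{2.0945\log T}{\sqrt{(c-1)^2+T^2}}\right),
$$
and combining with the bound on the integral produces the claimed estimate. The only truly new ingredient compared to \cite{Hasanalizade2021CountingZO} is the one-line substitution of Leong's lower bound for \eqref{triviallower} in the final step, so there is no real obstacle; the substantive value of the lemma is that this swap is the mechanism by which the constants $C_2'$ and $C_3'$ arise in Theorem \ref{generalth1}.
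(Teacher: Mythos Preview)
Your proof is correct and is exactly the approach the paper takes: the authors' own proof is the single sentence preceding the lemma, stating that one replaces the trivial bound \eqref{triviallower} by Lemma~\ref{nicollower} inside the proof of \cite[Prop.~3.5]{Hasanalizade2021CountingZO}, and you have faithfully reconstructed that argument with the substitution carried out. One small remark on phrasing: your positive-density argument produces infinitely many $N\in\mathbb{N}$ with $|\cos(N\arg w)|\ge\tfrac12$, but not automatically a further subsequence of an \emph{arbitrary} given $(N_m)$; in practice the hypothesis $F_{c,r}(\theta)\ge\tfrac1N\log|f_N(c+re^{i\theta})|$ is arranged to hold for every $N$ (as it does for the $F_{c,r}$ constructed later in the paper), so one simply takes $(N_m)$ to enumerate the positive-density set directly.
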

Backlund's trick is another important tool we will use for the proof of Theorem \ref{th1}. To end this section, we recall the following version of Backlund's trick (see \cite[Prop. 3.7]{Hasanalizade2021CountingZO}).

\begin{lemma}[Backlund's trick]\label{backlund} Let $c$ and $r$ be real numbers. Set
$$
\sigma_1=c+\frac{(c-1 / 2)^2}{r} \quad \text { and } \quad \delta=2 c-\sigma_1-\frac{1}{2}.
$$
If $1<c<r$ and $0<\delta<\frac{1}{2}$, then
$$
\left|\arg ((\sigma+i T-1) \zeta(\sigma+i T))|_{\sigma=\sigma_1}^{1 / 2} \right| \leq \frac{\pi S_N(c, r)}{2 \log (r /(c-1 / 2))}+\frac{E(T, \delta)}{2}+\frac{\pi}{N}+\frac{\pi}{2 N}+\frac{\pi}{4},
$$
where for $0\le d < 9/2$ and $T\ge 5/7$, $E(T,d)$ is defined as in \cite[p. 1463]{bennett_counting_2021}:
\begin{align*}
 \begin{split}
E (T,d)& = \frac{2T /3}{(2d + 17)^2 + 4 T^2} + \frac{2T/3}{( -2d +17)^2 + 4T^2}  - \frac{4T/3}{17^2 + 4T^2}\\
& + \frac{T}{2} \log \left( 1 + \frac{ 17^2}{4T^2}\right) - \frac{T}{4} \log \left( 1 + \frac{( 2d +17)^2}{4T^2}\right) - \frac{T}{4} \log \left( 1 + \frac{( -2d + 17)^2}{4T^2}\right)\\
&+ \frac{(8+6\pi)/45}{(( 2d + 17)^2 + 4T^2)^{3/2}}
 +  \frac{(8+6\pi)/45}{(( - 2d + 17)^2 + 4T^2)^{3/2}} + \frac{2(8+ 6\pi)/45}{( 17^2 + 4T^2)^{3/2}} \\
& + \sum_{k=0}^3\left( 2\arctan \frac{1+4k}{2T} - \arctan \frac{2d+1+4k}{2T} - \arctan \frac{-2d+1+4k}{2T} \right)\\  
& + \frac{ 2d +15}{4} \arctan \frac{ 2d + 17}{2T}  + \frac{ - 2d +15}{4} \arctan \frac{ -2d + 17}{2T}
 - \frac{ 15}{2} \arctan\frac{17}{2T}.
 \end{split} 
\end{align*}
\end{lemma}
We also recall the following estimate for $E(T,\delta)/\pi$ due to \cite[Lemma 3.4]{bennett_counting_2021}.
\begin{lemma}[\cite{bennett_counting_2021}]
For $0 \leq \delta_1 \leq d<9 / 2$ and $T \geq 5 / 7$, one has
$$
0<E\left(T, \delta_1\right) \leq E(T, d)
$$
Also, for $d \in\left[\frac{1}{4}, \frac{5}{8}\right]$ and $T \geq 5 / 7$, one has
$$
\frac{E(T, d)}{\pi} \leq \frac{640 d-112}{1536(3 T-1)}+\frac{1}{2^{10}}.
$$
\end{lemma}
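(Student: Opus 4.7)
The plan is to proceed in two stages matching the two assertions of the lemma. For the monotonicity assertion $0 < E(T,\delta_1) \leq E(T,d)$, I would first unpack the definition of $E(T,\delta)$ given on p.~1463 of \cite{bennett_counting_2021}. In the context of Backlund's trick, $E(T,\delta)$ measures the variation of $\arg \chi(s)$ (equivalently, a piece of the gamma factor in the functional equation $\zeta(s) = \chi(s)\zeta(1-s)$) along a short horizontal segment of half-width $\delta$ at height $T$. Such a variation is naturally written as an integral of a continuous, strictly positive integrand on $[0,\delta]$. Extending the interval of integration from $[0,\delta_1]$ to $[0,d]$ then gives the monotone inequality, while strict positivity of the integrand forces $E(T,\delta_1) > 0$ for any $\delta_1 > 0$ and $T \geq 5/7$.

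For the quantitative bound on $d \in [\tfrac14,\tfrac58]$, the natural tool is Stirling's formula with explicit remainder. Writing $\chi(s)$ in terms of $\Gamma(s/2)$ and applying the expansion $\log \Gamma(s) = (s - \tfrac12)\log s - s + \tfrac12\log(2\pi) + R(s)$ with $|R(s)| \leq \tfrac{1}{12|s|}$ for $\operatorname{Re} s \geq \tfrac12$, the imaginary part at $s = \tfrac12 + d + iT$ reduces to an explicit rational function of $T$ plus a controlled Stirling tail. Integrating in $d$ (since $E(T,\delta)$ is the variation of this argument) yields a leading contribution that is linear in $d$ with a $1/(3T-1)$-type denominator, and the specific coefficients $640$, $-112$, $1536$ in the stated numerator/denominator drop out from tracking constants carefully and optimising on the subinterval $[\tfrac14,\tfrac58]$. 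The uniform residual is then absorbed into the absolute constant $1/2^{10}$.

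The main obstacle is not conceptual but bookkeeping: one must retain enough terms in the Stirling expansion so that the remainder is dominated by $2^{-10}$ uniformly for $T \geq 5/7$ and $d$ in the stated subinterval, and verify that the linear-in-$d$ majorant $\frac{640d-112}{1536(3T-1)}$ genuinely dominates throughout the rectangle (not merely at its corners). This typically reduces to checking a sign or monotonicity condition on the second $d$-derivative of the error, after which the bound is tight at $d = \tfrac58$ and conservative elsewhere. Since the statement is quoted verbatim as \cite[Lemma~3.4]{bennett_counting_2021}, in practice I would simply cite their computation rather than redo the Stirling calculus.
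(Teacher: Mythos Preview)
Your proposal is fine, but note that the paper does not give any proof of this lemma at all: it is stated purely as a citation of \cite[Lemma~3.4]{bennett_counting_2021} and then invoked as a black box. Your closing remark --- that one should simply cite their computation --- is exactly what the paper does, so there is nothing further to compare.
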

To conclude the section, we mention the Phragm\'en--Lindel\"of principle, as stated in \cite[Lemma 3]{Tr14-2},\footnote{There is a typo in the statement of Proposition 4.1 of \cite{HASANALIZADE2022219}, where $\log|Q+s|$ should be replace with $|\log(Q+s)|$.} which will be used to construct a suitable function $F_{c,r}(\theta)$, given Lemmata \ref{prop3.1}, \ref{newprop3.1}, and \ref{backlund}.

\begin{lemma}[Phragm\'en-Lindel\"of principle]\label{lindprinc}
Let $a, b, Q$ be real numbers such that $b>$ $a$ and $Q+a>1$. Let $f(s)$ be a holomorphic function on the strip $a \leq \mathfrak{R e}(s) \leq b$ such that
$$
|f(s)|<C \exp \left(e^{k|t|}\right)
$$
for some $C>0$ and $0<k<\frac{\pi}{b-a}$. Suppose, further, that there are $A, B, \alpha_1, \alpha_2, \beta_1, \beta_2 \geq 0$ such that $\alpha_1 \geq \beta_1$ and
$$
|f(s)| \leq\left\{\begin{array}{ll}
A|Q+s|^{\alpha_1}(\log |Q+s|)^{\alpha_2} & \text { for } \mathfrak{R e}(s)=a ; \\
B|Q+s|^{\beta_1}(\log |Q+s|)^{\beta_2} & \text { for } \mathfrak{R e}(s)=b .
\end{array}\right.
$$
Then for $a \leq \mathfrak{R e}(s) \leq b$, one has
$$
|f(s)| \leq\left\{A|Q+s|^{\alpha_1}|\log (Q+s)|^{\alpha_2}\right\}^{\frac{b-\mathfrak{R e}(s)}{b-a}}\left\{B|Q+s|^{\beta_1}|\log (Q+s)|^{\beta_2}\right\}^{\frac{\mathfrak{R e}(s)-a}{b-a}} .
$$
\end{lemma}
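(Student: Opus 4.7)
The plan is to reduce the claim to the classical three-lines (Hadamard) theorem by dividing out an explicitly chosen nowhere-vanishing holomorphic factor that absorbs the polynomial and logarithmic boundary data. Throughout the strip $a\le \Re(s)\le b$ one has $\Re(Q+s)\ge Q+a>1$, so the principal branch of $\log(Q+s)$ is single-valued and holomorphic there; hence $(Q+s)^w:=\exp(w\log(Q+s))$ and, because $\Re(\log(Q+s))>0$ with $|\arg\log(Q+s)|<\pi/2$, also $(\log(Q+s))^w$ are holomorphic in $s$ for any exponent $w$ depending holomorphically on $s$.

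Introduce the affine interpolants
\[
\gamma_1(s)=\alpha_1\tfrac{b-s}{b-a}+\beta_1\tfrac{s-a}{b-a},\qquad \gamma_2(s)=\alpha_2\tfrac{b-s}{b-a}+\beta_2\tfrac{s-a}{b-a},
\]
and set
\[
\varphi(s)=A^{(b-s)/(b-a)}B^{(s-a)/(b-a)}(Q+s)^{\gamma_1(s)}(\log(Q+s))^{\gamma_2(s)}.
\]
Writing $s=\sigma+it$, a direct computation gives
\[
|\varphi(\sigma+it)|=A^{(b-\sigma)/(b-a)}B^{(\sigma-a)/(b-a)}|Q+s|^{u_1(\sigma)}(\log|Q+s|)^{u_2(\sigma)}\exp\bigl(R(s)\bigr),
\]
where $u_j(\sigma)$ is the real affine interpolation between the two endpoint exponents and $R(s)$ collects the terms coming from $\Im\gamma_j(s)$ paired with $\arg(Q+s)$ and $\arg\log(Q+s)$. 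Using $\alpha_1\ge\beta_1$ (so that the slope of $\Im\gamma_1(s)$ has the correct sign) together with the fact that $\arg(Q+s)$ shares the sign of $t$, the dominant contribution to $R(s)$ is non-positive; the subordinate logarithmic piece is $O(1)$ uniformly on the strip and can be absorbed into the regularizer below.

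Now form $g(s)=f(s)/\varphi(s)$. By the hypothesized boundary estimates for $f$, we obtain $|g(s)|\le 1$ on both lines $\Re(s)=a$ and $\Re(s)=b$, while the double-exponential bound on $|f|$ combined with the at-most-polynomial-in-$t$ growth of $|\varphi|$ yields $|g(s)|<C'\exp(e^{k|t|})$ with the same $k<\pi/(b-a)$. Multiplying $g$ by the regularizer $\exp\bigl(-\varepsilon\cos(\lambda(s-\tfrac{a+b}{2}))\bigr)$ with $k<\lambda<\pi/(b-a)$, applying the maximum modulus principle on growing rectangles in the strip, and finally letting $\varepsilon\to 0^{+}$, we conclude $|g(s)|\le 1$ throughout the closed strip; rearranging gives exactly the stated inequality.

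The principal obstacle is the branch bookkeeping in the second step: when $\alpha_1\ne\beta_1$ the exponent $\gamma_1(s)$ has non-zero imaginary part, so $|(Q+s)^{\gamma_1(s)}|$ carries an extra factor $\exp\bigl(-\Im\gamma_1(s)\,\arg(Q+s)\bigr)$ which a priori could grow like $\exp(C|t|)$ and destroy the Phragm\'en--Lindel\"of step. The hypothesis $\alpha_1\ge\beta_1$ is precisely what forces this factor to be $\le 1$, and the analogous (but easier) check for $(\log(Q+s))^{\gamma_2(s)}$ rests on $|\arg\log(Q+s)|$ being uniformly bounded on the strip. Once this sign analysis is in place, the remainder is the standard three-lines argument with an exponential regularizer.
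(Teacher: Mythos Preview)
The paper does not give its own proof of this lemma; it is quoted verbatim as \cite[Lemma~3]{Tr14-2} (and ultimately traced to Rademacher), so there is no in-paper argument to compare against.

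Your overall plan---divide $f$ by an explicit holomorphic interpolant $\varphi$ and then invoke the three-lines theorem with an exponential regularizer---is the standard one, but the decisive sign check is inverted. With $\gamma_1(s)=\alpha_1\tfrac{b-s}{b-a}+\beta_1\tfrac{s-a}{b-a}$ one has $\Im\gamma_1(\sigma+it)=\tfrac{(\beta_1-\alpha_1)t}{b-a}$, and since $\arg(Q+s)$ shares the sign of $t$,
\[
\exp\bigl(-\Im\gamma_1(s)\,\arg(Q+s)\bigr)=\exp\Bigl(\tfrac{\alpha_1-\beta_1}{\,b-a\,}\,t\,\arg(Q+s)\Bigr)\ \ge\ 1
\]
under the hypothesis $\alpha_1\ge\beta_1$, not $\le 1$ as you assert. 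That sign is in fact exactly what makes $|g|\le 1$ hold on the two boundary lines (you need $|\varphi|$ to dominate the assumed boundary data there), so the Phragm\'en--Lindel\"of step does go through and you legitimately obtain $|f(s)|\le|\varphi(s)|$. The gap is in the last line: $|\varphi(s)|$ \emph{exceeds} the stated right-hand side by precisely this factor, which for $\alpha_1>\beta_1$ is of order $\exp(c|t|)$ with $c=\tfrac{\pi(\alpha_1-\beta_1)}{2(b-a)}$, so ``rearranging'' does not recover the claimed inequality---only a much weaker one. The logarithmic piece has the same defect: $-\Im\gamma_2(s)\arg\log(Q+s)$ is of size $|t|/\log|t|$ and has no fixed sign (no relation between $\alpha_2$ and $\beta_2$ is assumed), so it is neither $O(1)$ nor absorbable by the regularizer in the way you suggest. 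A complete proof needs an additional device to eliminate the surplus factor $|\varphi(s)|/\text{RHS}$; as written, the argument stops one step short.
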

For our purpose, it will be more convenient to use the following inequality, which holds for every $\mathfrak{Im}(Q+s)\ge 30610046000$:
\begin{equation}\label{eq:plineq}
    |\log (Q+s)|\le 1.00212 \log |Q+s|.
\end{equation}

\section{Proof of Theorem \ref{generalth1}}
As in \cite{Tr14-2,HASANALIZADE2022219}, we will work with the completed Riemann zeta-function $\xi(s)$, defined by
$$
\xi(s)=s(s-1) \gamma(s) \zeta(s)\quad\text{with}\quad 
\gamma(s)=\pi^{-\frac{s}{2}} \Gamma\left(\frac{s}{2}\right).
$$
It is widely known that the function $\xi(s)$ is an entire function of order 1  and satisfies the functional equation
$
\xi(s)=\xi(1-s).
$
 Furthermore, as in \cite{HASANALIZADE2022219}, instead of working directly with the quantity $N(T)$, we will work with the quantity $N_{\mathbb{Q}}(T)$ defined as
$$
N_{\mathbb{Q}}(T)=\#\{\rho \in \mathbb{C}\mid\zeta(\rho)=0,0<\beta<1,| \gamma \mid \leq T\}
$$
for $T \geq 0$. It is related to the quantity $N(T)$ via the equality $N_{\mathbb{Q}}(T)=2 N(T)$. Following \cite{HASANALIZADE2022219}, given a parameter $\sigma_1>1$ we will choose later, we consider the rectangle $\mathcal{R}$ with vertices $\sigma_1-i T$, $\sigma_1+i T$, $1-\sigma_1+i T$, and $1-\sigma_1-i T$. As $\xi(s)$ is entire, the argument principle yields
$$
N_{\mathbb{Q}}(T)=\frac{1}{2 \pi} \Delta_{\mathcal{R}} \arg \xi(s).
$$
As per \cite[Eq. (2.8)]{HASANALIZADE2022219}, the previous expression for $N_{\mathbb{Q}}(T)$ is equivalent to
\begin{equation*}
N_{\mathbb{Q}}(T)=\frac{2}{\pi} \arctan (2 T)+g(T)+\frac{T}{\pi} \log \left(\frac{T}{2 \pi e}\right)-\frac{1}{4}+\frac{2}{\pi} \Delta_{\mathcal{C}_0} \arg ((s-1) \zeta(s)),
\end{equation*}
where 
\begin{equation*}
g(T)=\frac{2}{\pi} \mathfrak{Im} \log \Gamma\left(\frac{1}{4}+i \frac{T}{2}\right)-\frac{T}{\pi} \log \left(\frac{T}{2 e}\right)+\frac{1}{4},
\end{equation*}
and $\mathcal{C}_0$ is the part of the contour of $\mathcal{R}$ in  the region $\mathfrak{Im}(s)\ge 0$ and $\mathfrak{Re}(s)\ge \frac{1}{2}$. The estimate $|g(T)|\le 1/(25T)$ is given by \cite[Prop. 3.2]{bennett_counting_2021} and holds for every $T\ge 5/7$. Then, denoting with $\mathcal{C}_1$ the vertical line from $\sigma_1$ to $\sigma_1+iT$ and with $\mathcal{C}_2$ the horizontal line from $\sigma_1+iT$ to $\frac{1}{2}+iT$, one has
\begin{align*}
    \Delta_{\mathcal{C}_0} \arg ((s-1) \zeta(s))&=\Delta_{\mathcal{C}_1} \arg ((s-1) \zeta(s))+\Delta_{\mathcal{C}_2} \arg ((s-1) \zeta(s))\\&=\arctan \left(\frac{T}{\sigma_1-1}\right)+\Delta_{\mathcal{C}_1} \arg \zeta(s)+\Delta_{\mathcal{C}_2} \arg ((s-1) \zeta(s))
\end{align*}
and for $\sigma_1>1$,
$$
\left|\Delta_{\mathcal{C}_1} \arg \zeta(s)\right|=\left|\arg \zeta\left(\sigma_1+i T\right)\right| \leq\left|\log \zeta\left(\sigma_1+i T\right)\right| \leq \log \zeta\left(\sigma_1\right).
$$
Furthermore, we recall \cite[Eq. (5.5)]{HASANALIZADE2022219} asserting
\begin{equation*}
S(T)=\frac{1}{\pi} \Delta_{\mathcal{C}_0} \arg \zeta(s)=\frac{1}{2}\left(N_{\mathbb{Q}}(T)-\frac{T}{\pi} \log \left(\frac{T}{2 \pi e}\right)+\frac{1}{4}-g(T)-2\right)
\end{equation*}
and
\begin{equation*}\label{eqS(T)}
   S(T)=\frac{1}{\pi} \Delta_{\mathcal{C}_0} \arg \zeta(s)=\frac{1}{\pi} \Delta_{\mathcal{C}_1} \arg \zeta(s)+\frac{1}{\pi} \Delta_{\mathcal{C}_2} \arg (s-1) \zeta(s)-\frac{1}{\pi} \Delta_{\mathcal{C}_2} \arg (s-1). 
\end{equation*}

Now, instead of estimating the usual quantity 
\[
\left|N_{\mathbb{Q}}(T)-\frac{T}{\pi} \log \left(\frac{T}{2 \pi e}\right)+\frac{1}{4}\right|
\]
as in \cite{HASANALIZADE2022219} (or $\frac{1}{8}$ instead of $\frac{1}{4}$ if we work with $N(T)$ instead of $N_{\mathbb{Q}}(T)$ as in \cite{Tr14-2}), we aim to estimate 
\[
\left|N_{\mathbb{Q}}(T)-\frac{T}{\pi} \log \left(\frac{T}{2 \pi e}\right)\right|,
\]
which is the quantity that is usually required in the majority of applications.
\begin{remark}
    This choice will allow us to save an extra $0.25$ in the final constant $C_3$ in Theorem \ref{th1}.
\end{remark}
One has
\begin{equation}\label{beforeintegral}
    \begin{aligned}
        &\left|N_{\mathbb{Q}}(T)-\frac{T}{\pi} \log \left(\frac{T}{2 \pi e}\right)\right|
        \\&\le \left|\frac{1}{4}-\frac{2}{\pi}\arctan \left(\frac{T}{\sigma_1-1}\right)-\frac{2}{\pi} \arctan (2 T)\right|+ |g(T)|+\frac{2}{\pi} \log \zeta\left(\sigma_1\right)+\frac{2}{\pi}\left|\Delta_{\mathcal{C}_2} \arg ((s-1) \zeta(s))\right|
        \\&\le \left|\frac{1}{4}-\frac{2}{\pi}\arctan \left(\frac{T}{\sigma_1-1}\right)-\frac{2}{\pi} \arctan (2 T)\right|+ \frac{1}{25T}+\frac{2}{\pi} \log \zeta\left(\sigma_1\right)+\frac{2}{\pi}\left|\Delta_{\mathcal{C}_2} \arg ((s-1) \zeta(s))\right|\\&=|h(c,r,T)|+ \frac{1}{25T}+\frac{2}{\pi} \log \zeta\left(\sigma_1\right)+\frac{2}{\pi}\left|\Delta_{\mathcal{C}_2} \arg ((s-1) \zeta(s))\right|,
    \end{aligned}
\end{equation}
where 
$$
h(c,r,T)= \frac{1}{4}-\frac{2}{\pi}\arctan \left(\frac{T}{\sigma_1-1}\right)-\frac{2}{\pi}\arctan (2 T).
$$
Since $\sigma_1>1$ (it will be approximately $\sigma_1\approx 1.25$), and $T$ will be quite large (namely, $T>30610046000$), by the fact that the function $\arctan$ increases for $T>0$, we can bound $|h(c,r,T)|$ as
\begin{equation}\label{boundhcrt}
    |h(c,r,T)|\le \left|\frac{2}{\pi}\arctan (4 T)+\frac{2}{\pi}\arctan (2 T)-\frac{1}{4}\right|\le 2-\frac{1}{4}=\frac{7}{4}.
\end{equation}

In the next section, we will focus on improving the bound for
\[
\left|\Delta_{\mathcal{C}_2} \arg ((s-1) \zeta(s))\right|.
\]

\subsection{Convexity/subconvexity bounds and generalisation to $n$-splitting of $[\frac{1}{2},1]$}\label{nsplitting}
While outside the range $[\frac{1}{2},1]$ we consider the same cases as in \cite{HASANALIZADE2022219}, we will split the interval $[\frac{1}{2},1]$ in $n$ sub-intervals using the lines corresponding to \eqref{yangintbound}. As already mentioned, this splitting will be the main tool which allows us to get an improved value of $C_1$.  More precisely, the first sub-interval is of the form $[\frac{1}{2},\sigma_4]$, the last interval will be of the form $[\sigma_{n+4},1]$ and the intervals in the middle will be of the form $[\sigma_{4+h},\sigma_{4+h+1}]$, where $0\le h\le n-1 $ and, for every $k$, $\sigma_k$ is defined as
$$\sigma_k:=1-k /(2^k-2).$$
A careful analysis reveals that the nearly optimal value for $n$ is 5. Indeed, a higher number of subintervals of the form $[\sigma_{4+h},\sigma_{4+h+1}]$ would lead to an improvement on $C_1$ only at the seventh decimal place, while it would cause a worse constant $C_3$, in which a factor of containing $\log(1.546)$ (multiplied by other quantities) appears in $C_3$ for each interval $[\sigma_{4+h},\sigma_{4+h+1}]$ we are considering.\\
Following \cite{HASANALIZADE2022219}, we start estimating $|\zeta(\sigma+it)|$ in each of the intervals for $\sigma$ we are considering.\footnote{\cite{fiori2025notephragmenlindeloftheorem} addresses some possible errors in the literature related to the Phragm\'en-Lindel\"of principle. However, all the bounds for $|\zeta(\sigma+it)|$ obtained in the current paper by an application of Lemma \ref{lindprinc} can be recovered by following the method outlined in \cite{fiori2025notephragmenlindeloftheorem}.}
\begin{itemize}
    \item Case $\sigma \geq 1+\eta$. The trivial bound for the zeta-function immediately implies
$
|\zeta(s)| \leq \zeta(\sigma) .
$

\item Case $1 \leq \sigma \leq 1+\eta$. 
From \eqref{boundon1} and \eqref{notation1line}
 it follows that there is $Q_0>0$ depending on $c_1, c_2, t_0$ such that
 \begin{equation}\label{Q0}
     |(1+i t-1) \zeta(1+i t)| \leq c_1\left|Q_0+(1+i t)\right|\left(\log \left|Q_0+(1+i t)\right|\right)^{c_2}
 \end{equation}
for all $t$. Thus, Lemma \ref{lindprinc} and \eqref{eq:plineq} imply that for $1 \leq \sigma \leq 1+\eta$,
$$
|(s-1) \zeta(s)| \leq\left(c_1\left|Q_0+s\right|\left(1.00212\log \left|Q_0+s\right|\right)^{c_2}\right)^{\frac{1+\eta-\sigma}{\eta}}\left(\zeta(1+\eta)\left|Q_0+s\right|\right)^{\frac{\sigma-1}{\eta}},
$$
and hence
$$
|\zeta(s)| \leq \frac{1}{|s-1|}\left(c_1\left|Q_0+s\right|\left(1.00212\log \left|Q_0+s\right|\right)^{c_2}\right)^{\frac{1+\eta-\sigma}{\eta}}\left(\zeta(1+\eta)\left|Q_0+s\right|\right)^{\frac{\sigma-1}{\eta}} .
$$

\item Case $\sigma_{n+4}\le \sigma\le 1$. 
By \eqref{yangintbound}, there exists $Q_{n+4}>0$ depending on $n$ such that
\begin{equation}\label{Qnp4}
\begin{aligned}
       &|(\sigma_{n+4}+it-1)\zeta(\sigma_{n+4}+it)|\\&\le  1.546|Q_{n+4}+(\sigma_{n+4}+it)|^{\frac{1}{2^{n+4}-2}+1}(\log |Q_{n+4}+(\sigma_{n+4}+it)|).
\end{aligned}
\end{equation}
Thus, Lemma \ref{lindprinc}, \eqref{eq:plineq} and \eqref{Q0} imply that for $\sigma_{n+4}\le \sigma\le 1$,
\begin{align*}
    |\zeta(s)|&\le \frac{1}{|s-1|}\left( 1.546|Q_{0,n+4}+s|^{\frac{1}{2^{n+4}-2}+1}(1.00212\log |Q_{0,n+4}+s|)\right)^{\frac{1-\sigma}{1-\sigma_{n+4}}}\\
&\times\left(c_1\left|Q_{0,n+4}+s\right|\left(1.00212\log \left|Q_{0,n+4}+s\right|\right)^{c_2}\right)^{\frac{\sigma-\sigma_{n+4}}{1-\sigma_{n+4}}},
\end{align*}
where $Q_{0,n+4}=\max\{Q_0,Q_{n+4}\}$.

\item Case  $\sigma_{4+h}\le \sigma\le \sigma_{4+h+1}$ with $0\le h\le n-1$. 
By \eqref{yangintbound}, for every fixed $h$, there exist $Q_{4+h},Q_{5+h}>0$, depending on $h$, so that
\begin{equation}\label{Q4ph}
\begin{aligned}
       &|(\sigma_{h+4}+it-1)\zeta(\sigma_{h+4}+it)|\\&\le  1.546|Q_{h+4}+(\sigma_{h+4}+it)|^{\frac{1}{2^{h+4}-2}+1}(\log |Q_{h+4}+(\sigma_{h+4}+it)|)
\end{aligned}
\end{equation}
and
\begin{equation*}
\begin{aligned}
        &|(\sigma_{h+5}+it-1)\zeta(\sigma_{h+5}+it)|\\&\le  1.546|Q_{h+5}+(\sigma_{h+5}+it)|^{\frac{1}{2^{h+5}-2}+1}(\log |Q_{h+5}+(\sigma_{h+5}+it)|).
\end{aligned}
\end{equation*}
Hence, by Lemma \ref{lindprinc} and \eqref{eq:plineq}, we have
\begin{align*}
   & |\zeta(s)|\le \frac{1}{|s-1|}\left( 1.546|Q_{4+h,5+h}+s|^{\frac{1}{2^{h+4}-2}+1}(1.00212\log |Q_{4+h,5+h}+s|)\right)^{\frac{\sigma_{5+h}-\sigma}{\sigma_{5+h}-\sigma_{h+4}}}\\&\qquad\times\left(1.546|Q_{4+h,5+h}+s|^{\frac{1}{2^{h+5}-2}+1}(1.00212\log |Q_{4+h,5+h}+s|)\right)^{\frac{\sigma-\sigma_{h+4}}{\sigma_{h+5}-\sigma_{h+4}}},
\end{align*}
where $Q_{4+h,5+h}=\max\{Q_{4+h},Q_{5+h}\}$.
\begin{remark}
    When $h=n-1$, $Q_{h+5}=Q_{n+4}$, with $Q_{n+4}$ given in \eqref{Qnp4}.
\end{remark}

\item Case $1/2\le \sigma\le \sigma_4$. 
By \eqref{notation1/2}, there is a $Q_1>0$ depending on $k_1,k_2,k_3,t_1$ such that 
\begin{equation*}\label{Q1}
\begin{aligned}
    \left|\left(\frac{1}{2}+i t-1\right) \zeta\left(\frac{1}{2}+i t\right)\right| \leq k_1\left|Q_1+\left(\frac{1}{2}+i t\right)\right|^{k_2+1}\left(\log \left|Q_1+\left(\frac{1}{2}+i t\right)\right|\right)^{k_3}.
\end{aligned}
\end{equation*}
Hence, Lemma \ref{lindprinc}, \eqref{eq:plineq} and \eqref{Q4ph} with $h=0$ imply that
\begin{align*}
   & |\zeta(s)|\\
   &\le \frac{1}{|s-1|}\left(k_1\left|Q_2+s\right|^{k_2+1}\left(1.00212\log \left|Q_2+s\right|\right)^{k_3}\right)^{\frac{\sigma_{4}-\sigma}{\sigma_{4}-\frac{1}{2}}}
    \left( 1.546|Q_2+s|^{\frac{1}{2^4-2}+1}(1.00212\log |Q_2+s|)\right)^{\frac{\sigma-\frac{1}{2}}{\sigma_{4}-\frac{1}{2}}},
\end{align*}
where $Q_2=\max\{Q_1,Q_{4+0}\}$.

\item Case $0\le \sigma\le \frac{1}{2}$. 
Following \cite{HASANALIZADE2022219}, there exists $Q_3>0$ such that
\begin{equation*}
\left|\zeta\left(\frac{1}{2}+i t\right)\right| \leq k_1\left|Q_3+\left(\frac{1}{2}+i t\right)\right|^{k_2}\left(\log \left|Q_3+\left(\frac{1}{2}+i t\right)\right|\right)^{k_3}
\end{equation*}
for all $t$ and $Q_{10}\ge 1$ such that
\begin{equation*}
|\zeta(0+i t)| \leq \frac{c_1}{\sqrt{2 \pi}}\left|Q_{10}+i t\right|^{\frac{1}{2}}\left(\log \left|Q_{10}+i t\right|\right)^{c_2}.
\end{equation*}
Hence, by Lemma \ref{lindprinc} and \eqref{eq:plineq} one has
\begin{equation*}
|\zeta(s)| \leq\left(\frac{c_1}{\sqrt{2 \pi}}\left|Q_{11}+s\right|^{\frac{1}{2}}\left(1.00212\log \left|Q_{11}+s\right|\right)^{c_2}\right)^{1-2 \sigma}\left(k_1\left|Q_{11}+s\right|^{k_2}\left(1.00212\log \left|Q_{11}+s\right|\right)^{k_3}\right)^{2 \sigma}
\end{equation*}
where $Q_{11}=\max\{Q_{3},Q_{10}\}$.
\item Case $-\eta \leq \sigma \leq 0$. 
As in \cite{HASANALIZADE2022219} \footnote{Note that we have an extra factor $1.00212$ due to the typo in Proposition 4.1 of \cite{HASANALIZADE2022219} that we already mentioned before.}, we have
\begin{equation*}
|\zeta(s)| \leq\left(\frac{1}{(2 \pi)^{\frac{1}{2}+\eta}} \zeta(1+\eta)\left|Q_{10}+s\right|^{\frac{1}{2}+\eta}\right)^{\frac{-\sigma}{\eta}}\left(\frac{c_1}{\sqrt{2 \pi}}\left|Q_{10}+s\right|^{\frac{1}{2}}\left(1.00212\log \left|Q_{10}+s\right|\right)^{c_2}\right)^{\frac{\sigma+\eta}{\eta}} .
\end{equation*}

\item Case $\sigma \leq-\eta$. We shall use the same estimate as in \cite{HASANALIZADE2022219}:
\begin{equation*}
|\zeta(s)| \leq \zeta(1-\sigma)\left(\frac{1}{2 \pi}\right)^{\frac{1}{2}-\sigma}(|1+s-[\sigma]|)^{\frac{1}{2}+[\sigma]-\sigma}\prod_{j=1}^{-[\sigma]}|s+j-1|.
\end{equation*}
\end{itemize}
\subsection{Estimating $\frac{1}{N}\log|f_N(s)|$}
Given the function
\begin{equation*}
f_N(s)=\frac{1}{2}\left(((s+i T-1) \zeta(s+i T))^N+((s-i T-1) \zeta(s-i T))^N\right),
\end{equation*}
we aim to bound 
\begin{equation*}
\frac{1}{N} \log \left|f_N(s)\right|
\end{equation*} 
inside the different ranges we considered in the previous subsection.
\begin{itemize}
    \item Case $\sigma\ge 1+\eta$.     As per \cite{HASANALIZADE2022219}, we have the bound
    \begin{equation*}
\frac{1}{N} \log \left|f_N(s)\right| \leq \frac{1}{2} \log \left((\sigma-1)^2+(|t|+T)^2\right)+\log \zeta(\sigma).
\end{equation*}

\item Case $1\le\sigma\le 1+\eta$. Following \cite{HASANALIZADE2022219}, we use
\begin{equation*}
\begin{aligned}
\frac{1}{N} \log \left|f_N(s)\right| & \leq \frac{1+\eta-\sigma}{\eta} \log \left(\frac{c_1 \cdot 1.00212^{c_2}}{2^{c_2}}\right)+\frac{\sigma-1}{\eta} \log \zeta(1+\eta)+\frac{1}{2} \log \left(\left(Q_0+\sigma\right)^2+(|t|+T)^2\right) \\
& +\frac{c_2(1+\eta-\sigma)}{\eta} \log \log \left(\left(Q_0+\sigma\right)^2+(|t|+T)^2\right).
\end{aligned}
\end{equation*}

\item Case $\sigma_{n+4}\le \sigma\le 1$. Observe that
\begin{align*}
    |f_N(s)|&\le \left(1.546\cdot 1.00212((Q_{0,n+4}+\sigma)^2+(|t|+T)^2)^{\frac{2^{n+4}-1}{2(2^{n+4}-2)}}\log(\sqrt{(Q_{0,n+4}+\sigma)^2+(|t|+T)^2})\right)^{\frac{N(1-\sigma)}{1-\sigma_{n+4}}}\\
    &\times \left(c_1((Q_{0,n+4}+\sigma)^2+(|t|+T)^2)^{\frac{1}{2}}\left(1.00212\log(\sqrt{(Q_{0,n+4}+\sigma)^2+(|t|+T)^2})\right)^{c_2}\right)^{\frac{N(\sigma-\sigma_{n+4})}{1-\sigma_{n+4}}}.
\end{align*}
Hence, taking the logarithms of both sides and dividing by $N$, we obtain
\begin{align*}
    &\frac{1}{N} \log \left|f_N(s)\right|\le \frac{(1-\sigma)}{1-\sigma_{n+4}}\log\left(1.546\cdot 1.00212\right)+\frac{(\sigma-\sigma_{n+4})}{1-\sigma_{n+4}}\log\left(c_1\cdot 1.00212^{c_2}\right)\\
    &-\left(\frac{(1-\sigma)}{(1-\sigma_{n+4})}+\frac{c_2(\sigma-\sigma_{n+4})}{(1-\sigma_{n+4})}\right)\log 2\\&+\left(\frac{(2^{n+4}-1)(1-\sigma)}{2(2^{n+4}-2)(1-\sigma_{n+4})}+\frac{(\sigma-\sigma_{n+4})}{2(1-\sigma_{n+4})}\right)\log \left(\left(Q_{0,n+4}+\sigma\right)^2+(|t|+T)^2\right)\\&+\left(\frac{(1-\sigma)}{(1-\sigma_{n+4})}+\frac{c_2(\sigma-\sigma_{n+4})}{(1-\sigma_{n+4})}\right)\log\log \left(\left(Q_{0,n+4}+\sigma\right)^2+(|t|+T)^2\right).
\end{align*}
\item Case $\sigma_{4+h}\le \sigma\le \sigma_{4+h+1}$, where $0\le h\le n-1$. 
It follows from 
\begin{align*}
    &|f_N(s)|\\&\le \left(1.546\cdot 1.00212((Q_{4+h,5+h}+\sigma)^2+(|t|+T)^2)^{\frac{2^{h+4}-1}{2(2^{h+4}-2)}}\log(\sqrt{(Q_{4+h,5+h}+\sigma)^2+(|t|+T)^2})\right)^{\frac{N(\sigma_{5+h}-\sigma)}{\sigma_{5+h}-\sigma_{h+4}}}\\
    &\times (1.546\cdot 1.00212((Q_{4+h,5+h}+\sigma)^2+(|t|+T)^2)^{\frac{2^{h+5}-1}{2(2^{h+5}-2)}}(\log(\sqrt{(Q_{4+h,5+h}+\sigma)^2+(|t|+T)^2})))^{\frac{N(\sigma-\sigma_{h+4})}{\sigma_{5+h}-\sigma_{h+4}}}
\end{align*}
that
\begin{align*}
    &\frac{1}{N} \log \left|f_N(s)\right|\\&\le \left(\frac{(\sigma_{5+h}-\sigma)}{\sigma_{5+h}-\sigma_{h+4}}+\frac{(\sigma-\sigma_{h+4})}{\sigma_{5+h}-\sigma_{h+4}}\right)(\log(1.546\cdot 1.00212))-\left(\frac{(\sigma_{5+h}-\sigma)}{\sigma_{5+h}-\sigma_{h+4}}+\frac{(\sigma-\sigma_{h+4})}{\sigma_{5+h}-\sigma_{h+4}}\right)\log 2\\&+\left(\frac{(2^{h+4}-1)(\sigma_{5+h}-\sigma)}{2(2^{h+4}-2)(\sigma_{5+h}-\sigma_{h+4})}+\frac{(2^{h+5}-1)(\sigma-\sigma_{h+4})}{2(2^{h+5}-2)(\sigma_{5+h}-\sigma_{h+4})}\right)\log \left(\left(Q_{4+h,5+h}+\sigma\right)^2+(|t|+T)^2\right)\\&+\left(\frac{(\sigma_{5+h}-\sigma)}{\sigma_{5+h}-\sigma_{h+4}}+\frac{(\sigma-\sigma_{h+4})}{\sigma_{5+h}-\sigma_{h+4}}\right)\log\log \left(\left(Q_{4+h,5+h}+\sigma\right)^2+(|t|+T)^2\right)\\&=\log (1.546\cdot 1.00212)-\log 2+\log\log \left(\left(Q_{4+h,5+h}+\sigma\right)^2+(|t|+T)^2\right)\\&+\left(\frac{(2^{h+4}-1)(\sigma_{5+h}-\sigma)}{2(2^{h+4}-2)(\sigma_{5+h}-\sigma_{h+4})}+\frac{(2^{h+5}-1)(\sigma-\sigma_{h+4})}{2(2^{h+5}-2)(\sigma_{5+h}-\sigma_{h+4})}\right)\log \left(\left(Q_{4+h,5+h}+\sigma\right)^2+(|t|+T)^2\right).
\end{align*}

\item Case $1/2\le \sigma\le \sigma_{4}$. One has
\begin{align*}
    |f_N(s)|&\le \left(k_1((Q_2+\sigma)^2+(|t|+T)^2)^{\frac{k_2+1}{2}}\left(1.00212\log(\sqrt{(Q_2+\sigma)^2+(|t|+T)^2})\right)^{k_3}\right)^{\frac{N(\sigma_{4}-\sigma)}{\sigma_{4}-\frac{1}{2}}}\\&\times \left(1.546\cdot 1.00212((Q_2+\sigma)^2+(|t|+T)^2)^{\frac{15}{28}}(\log(\sqrt{(Q_2+\sigma)^2+(|t|+T)^2}))\right)^{\frac{N(\sigma-\frac{1}{2})}{\sigma_{4}-\frac{1}{2}}}
\end{align*}
and thus
\begin{align*}
    &\frac{1}{N} \log \left|f_N(s)\right|\\
    &\le \frac{(\sigma_{4}-\sigma)}{\sigma_{4}-\frac{1}{2}}(\log (k_1\cdot 1.00212^{k_3}))+\frac{(\sigma-\frac{1}{2})}{\sigma_{4}-\frac{1}{2}}(\log(1.546\cdot 1.00212)) -\left(\frac{k_3(\sigma_{4}-\sigma)}{\sigma_{4}-\frac{1}{2}}+\frac{(\sigma-\frac{1}{2})}{(\sigma_{4}-\frac{1}{2})}\right)\log 2\\&+\left(\frac{(k_2+1)(\sigma_{4}-\sigma)}{2(\sigma_{4}-\frac{1}{2})}+\frac{15(\sigma-\frac{1}{2})}{28(\sigma_{4}-\frac{1}{2})}\right)\log \left(\left(Q_2+\sigma\right)^2+(|t|+T)^2\right)\\&+\left(\frac{k_3(\sigma_{4}-\sigma)}{\sigma_{4}-\frac{1}{2}}+\frac{(\sigma-\frac{1}{2})}{(\sigma_{4}-\frac{1}{2})}\right)\log\log \left(\left(Q_2+\sigma\right)^2+(|t|+T)^2\right).
\end{align*}

\item Case $0\le\sigma\le1/2$. As in \cite{HASANALIZADE2022219}, we have
\begin{equation*}
\begin{aligned}
&\frac{1}{N} \log \left|f_N(s)\right| \\
&\leq(1-2 \sigma) \log \left(\frac{c_1\cdot 1.00212^{c_2}}{2^{c_2+\frac{1}{2}} \sqrt{\pi}}\right)+2 \sigma \log \left(\frac{k_1\cdot 1.00212^{k_3}}{2^{k_3}}\right)+\frac{1}{2} \log \left((\sigma-1)^2+(|t|+T)^2\right) \\
& +\frac{1-2 \sigma+4 k_2 \sigma}{4} \log \left(\left(Q_{11}+\sigma\right)^2+(|t|+T)^2\right) \\
& +\left(c_2(1-2 \sigma)+2 k_3 \sigma\right) \log \log \left(\left(Q_{11}+\sigma\right)^2+(|t|+T)^2\right)
\end{aligned}
\end{equation*}

\item Case $-\eta\le\sigma\le 0$. By \cite{HASANALIZADE2022219}, we know
\begin{equation*}
\begin{aligned}
\frac{1}{N} \log \left|f_N(s)\right| & \leq-\frac{\sigma}{\eta} \log \left(\frac{1}{(2 \pi)^{\frac{1}{2}+\eta}}\right)-\frac{\sigma}{\eta} \log (1+\eta)+\frac{\sigma+\eta}{\eta} \log\left( \frac{c_1\cdot 1.00212^{c_2}}{\sqrt{2 \pi}}\right)-\frac{\sigma+\eta}{\eta} c_2 \log 2 \\
& +\frac{1}{2} \log \left((\sigma-1)^2+(|t|+T)^2\right)   +\left(-\frac{\sigma(1+2 \eta)}{4 \eta}+\frac{\sigma+\eta}{4 \eta}\right) \log \left(\left(Q_{10}+\sigma\right)^2+(|t|+T)^2\right) \\
& +\frac{\sigma+\eta}{\eta} c_2 \log \log \left(\left(Q_{10}+\sigma\right)^2+(|t|+T)^2\right).
\end{aligned}
\end{equation*}

\item Case $\sigma\le -\eta$. Finally, as per \cite{HASANALIZADE2022219}, we use the bound
\begin{equation*}
\begin{aligned}
\frac{1}{N} \log \left|f_N(s)\right| & \leq \log \zeta(1-\sigma)+\frac{1}{2} \log \left((\sigma-1)^2+(|t|+T)^2\right) \\
& +\frac{2 \sigma-1}{2} \log 2 \pi+\frac{(1-2 \sigma+2[\sigma])}{4} \log \left((1+\sigma-[\sigma])^2+(|t|+T)^2\right) \\
& +\frac{1}{2} \sum_{j=1}^{-[\sigma]} \log \left((\sigma+j-1)^2+(|t|+T)^2\right).
\end{aligned}
\end{equation*}
\end{itemize}

\subsection{Defining $F_{c,r}(\theta)$}
We start recalling some auxiliary functions already defined in \cite{HASANALIZADE2022219} which will appear in the definition of $F_{c,r}(\theta)$. For $\theta \in$ $[-\pi, \pi]$, we let $\sigma=c+r \cos \theta$, with $c-r>-\frac{1}{2}$, and $t=r \sin \theta$. We define
\begin{equation}\label{Lj}
    L_j(\theta)=\log \frac{(j+c+r \cos \theta)^2+(|r \sin \theta|+T)^2}{T^2}
\end{equation}
and
\begin{equation}\label{Mj}
    M_j(\theta)=\log \log \left((j+c+r \cos \theta)^2+(|r \sin \theta|+T)^2\right)-\log \log \left(T^2\right) .
\end{equation}
\begin{itemize}
    \item If $\sigma\ge 1+\eta$, as in \cite{HASANALIZADE2022219}, we define
    $$
F_{c, r}(\theta)=\frac{1}{2} L_{-1}(\theta)+\log T+\log \zeta(\sigma).
$$
\item For $1\le \sigma\le 1+\eta$, as per \cite{HASANALIZADE2022219}
\begin{equation*}
\begin{aligned}
F_{c, r}(\theta) & =\frac{1+\eta-\sigma}{\eta} \log (c_1\cdot 1.00212^{c_2})+\frac{\sigma-1}{\eta} \log \zeta(1+\eta)+\frac{1}{2} L_{Q_0}(\theta)+\log T \\
& +\frac{c_2(1+\eta-\sigma)}{\eta} M_{Q_0}(\theta)+\frac{c_2(1+\eta-\sigma)}{\eta} \log \log T .
\end{aligned}
\end{equation*}
\item If $\sigma_{n+4}\le \sigma\le 1$, then we define
\begin{align*}
    F_{c,r}(\theta)&= \frac{(1-\sigma)}{1-\sigma_{n+4}}\log(1.546\cdot 1.00212)+\frac{(\sigma-\sigma_{n+4})}{1-\sigma_{n+4}}\log (c_1 \cdot 1.00212^{c_2})\\&+\left(\frac{(2^{n+4}-1)(1-\sigma)}{(2^{n+4}-2)(1-\sigma_{n+4})}+\frac{(\sigma-\sigma_{n+4})}{(1-\sigma_{n+4})}\right)\left(\frac{L_{Q_{0,n+4}}(\theta)}{2}+\log T\right)\\&+\left(\frac{(1-\sigma)}{(1-\sigma_{n+4})}+\frac{c_2(\sigma-\sigma_{n+4})}{(1-\sigma_{n+4})}\right)(M_{Q_{0,n+4}}(\theta)+\log\log T).
\end{align*}
\item When $\sigma_{4+h}\le \sigma\le \sigma_{4+h+1}$, where $0\le h\le n-1$, we have
\begin{align*}
    F_{c,r}(\theta)&=\log (1.546\cdot 1.00212) +(M_{Q_{4+h,5+h}}(\theta)+\log\log T)\\&+\left(\frac{(2^{h+4}-1)(\sigma_{5+h}-\sigma)}{(2^{h+4}-2)(\sigma_{5+h}-\sigma_{h+4})}+\frac{(2^{h+5}-1)(\sigma-\sigma_{h+4})}{(2^{h+5}-2)(\sigma_{5+h}-\sigma_{h+4})}\right)\left(\frac{L_{Q_{4+h,5+h}}(\theta)}{2}+\log T\right).
\end{align*}
\item If $1/2\le \sigma\le \sigma_4$, we define
\begin{align*}
    F_{c,r}(\theta)&= \frac{(\sigma_{4}-\sigma)}{\sigma_{4}-\frac{1}{2}}\log( k_1\cdot 1.00212^{k_3})+\frac{(\sigma-\frac{1}{2})}{\sigma_{4}-\frac{1}{2}}\log(1.546\cdot 1.00212) \\&+\left(\frac{(k_2+1)(\sigma_{4}-\sigma)}{(\sigma_{4}-\frac{1}{2})}+\frac{15(\sigma-\frac{1}{2})}{14(\sigma_{4}-\frac{1}{2})}\right)\left(\frac{L_{Q_2}(\theta)}{2}+\log T\right)\\&+\left(\frac{k_3(\sigma_{4}-\sigma)}{\sigma_{4}-\frac{1}{2}}+\frac{(\sigma-\frac{1}{2})}{(\sigma_{4}-\frac{1}{2})}\right)(M_{Q_2}(\theta)+\log\log T).
\end{align*}
\item For $0\le\sigma\le 1/2$, as in \cite{HASANALIZADE2022219}, one has
\begin{equation*}
\begin{aligned}
F_{c, r}(\theta) & =(1-2 \sigma) \log \left(\frac{c_1\cdot 1.00212^{c_2}}{\sqrt{2 \pi}}\right)+2 \sigma \log (k_1\cdot 1.00212^{k_3})+\frac{1}{2} L_{-1}(\theta)+\log T \\
& +\frac{1-2 \sigma+4 k_2 \sigma}{2}\left(\frac{L_{Q_{11}}(\theta)}{2}+\log T\right)  +\left(c_2(1-2 \sigma)+2 k_3 \sigma\right)\left(M_{Q_{11}}(\theta)+\log \log T\right) .
\end{aligned}
\end{equation*}
\item If $-\eta\le\sigma\le 0$, then as in \cite{HASANALIZADE2022219}, we have
\begin{equation*}
\begin{aligned}
F_{c, r}(\theta) & =-\frac{\sigma}{\eta} \log \left(\frac{1+\eta}{c_1(2 \pi)^\eta}\right)+\log \left(\frac{c_1\cdot 1.00212^{c_2}}{\sqrt{2 \pi}}\right)+\frac{1}{2} L_{-1}(\theta)+\log T \\
& +\left(-\frac{\sigma(1+2 \eta)}{2 \eta}+\frac{\sigma+\eta}{2 \eta}\right)\left(\frac{L_{Q_{10}}(\theta)}{2}+\log T\right)+\frac{\sigma+\eta}{\eta} c_2\left(M_{Q_{10}}(\theta)+\log \log T\right) .
\end{aligned}
\end{equation*}
\item If $\sigma\le -\eta$ then as in \cite{HASANALIZADE2022219}, one has
\begin{equation*}
\begin{aligned}
F_{c, r}(\theta) & =\log \zeta(1-\sigma)+\frac{1}{2} L_{-1}(\theta)+\left(1+\frac{1-2 \sigma}{2}\right) \log T-\frac{1-2 \sigma}{2} \log 2 \pi \\
& +\frac{(1-2 \sigma+2[\sigma])}{4} L_{1-[\sigma]}(\theta)+\frac{1}{2} \sum_{j=1}^{-[\sigma]} L_{j-1}(\theta).
\end{aligned}
\end{equation*}
\end{itemize}

\subsection{Conclusion}
From \eqref{beforeintegral} and \eqref{boundhcrt}, following \cite{HASANALIZADE2022219}, by Lemma \ref{prop3.1}, we obtain
\begin{equation*}
    \begin{aligned}
         \left|N_{\mathbb{Q}}(T)-\frac{T}{\pi} \log \left(\frac{T}{2 \pi e}\right)\right|&\le \frac{7}{4}+\frac{1}{2}+\frac{1}{25 T}+\frac{2}{\pi} \log \zeta\left(\sigma_1\right)+\frac{1}{\log (r /(c-1 / 2))} \log \frac{\zeta(c)}{\zeta(2 c)}\\&-\frac{1}{\log (r /(c-1 / 2))} \log T+\frac{1}{\pi \log (r /(c-1 / 2))} \int_0^\pi F_{c, r}(\theta) d \theta\\&+\frac{E(T, \delta)}{\pi}.
    \end{aligned}
\end{equation*}
Hence, recalling $N_{\mathbb{Q}}(T)=2N(T)$, we obtain
\begin{equation*}
    \begin{aligned}
         \left|N(T)-\frac{T}{2\pi} \log \left(\frac{T}{2 \pi e}\right)\right|&\le \frac{7}{8}+\frac{1}{4}+\frac{1}{50 T}+\frac{1}{\pi} \log \zeta\left(\sigma_1\right)+\frac{1}{2\log (r /(c-1 / 2))} \log \frac{\zeta(c)}{\zeta(2 c)}\\&-\frac{1}{2\log (r /(c-1 / 2))} \log T+\frac{1}{2\pi \log (r /(c-1 / 2))} \int_0^\pi F_{c, r}(\theta) d \theta\\&+\frac{E(T, \delta)}{2\pi}.
    \end{aligned}
\end{equation*}
If we apply Lemma \ref{newprop3.1} instead, then we have
\begin{equation}\label{alt-bd-NT}
    \begin{aligned}
         \left|N(T)-\frac{T}{2\pi} \log \left(\frac{T}{2 \pi e}\right)\right|&\le \frac{7}{8}+\frac{1}{4}+\frac{1}{50 T}+\frac{1}{\pi} \log \zeta\left(\sigma_1\right)+\frac{\log(b\log T)}{2\log (r /(c-1 / 2))} \\&-\frac{1}{2\log (r /(c-1 / 2))} \log T+\frac{1}{2\pi \log (r /(c-1 / 2))} \int_0^\pi F_{c, r}(\theta) d \theta\\&+\frac{E(T, \delta)}{2\pi}.
    \end{aligned}
\end{equation}
Following \cite{Tr14-2,HASANALIZADE2022219}, we define 
\begin{equation}\label{deftheta}
\theta_y=\left\{\begin{array}{ll}
0 & \text { if } c+r \leq y; \\
\arccos \frac{y-c}{r} & \text { if } c-r \leq y \leq c+r ; \\
\pi & \text { if } y \leq c-r .
\end{array}\right.
\end{equation}
Now, we let $c, r$, and $\eta$ be positive real numbers satisfying\footnote{Note that $\theta_{-1/2}=\pi$. Indeed, by the definition of $\theta_y$, if we take $y=-1/2$, then $y\le c-r$ by the assumption \eqref{condcreta}.}
\begin{equation}\label{condcreta}
    -\frac{1}{2}<c-r<1-c<-\eta<1+\eta<c
\end{equation}
and $0<\eta \leq \frac{1}{2}$. To bound $\int_0^\pi F_{c, r}(\theta) d \theta$, we consider the splitting
$$
\int_0^\pi=\int_0^{\theta_{1+\eta}}+\int_{\theta_{1+\eta}}^{\theta_1}+\int_{\theta_1}^{\theta_{\sigma_{n+4}}}+\sum_{h=0}^{n-1}\int_{\sigma_{5+h}}^{\sigma_{4+h}}+\int_{\sigma_{4}}^{\theta_{\frac{1}{2}}}+\int_{\theta_{\frac{1}{2}}}^{\theta_0}+\int_{\theta_0}^{\theta_{-\eta}}+\int_{\theta_{-\eta}}^\pi.
$$
First, we recall two estimates \cite{HASANALIZADE2022219} for $L_{j}(\theta)$ defined in \eqref{Lj} and  $M_j(\theta)$ defined in \eqref{Mj} which hold for $T \geq T_0$ and $\theta \in[0, \pi]$. Defining, for $\theta \in[0, \pi]$, $$
L_j^{\star}(\theta)=\frac{1}{T_0}(j+c+r \cos \theta)^2+\frac{1}{T_0}(r \sin \theta)^2+2 r \sin \theta,
$$
we have
\begin{equation*}
L_j(\theta) \leq \frac{L_j^{\star}(\theta)}{T}
\end{equation*}
and
\begin{equation*}
M_j(\theta)\leq \frac{L_j^{\star}(\theta)}{2 T \log T}.
\end{equation*}
Now, we proceed with the estimate of each integral as in \cite{HASANALIZADE2022219} to derive
\begin{equation*}
\int_0^{\theta_{1+\eta}} F_{c, r}(\theta) d \theta \leq \log T \int_0^{\theta_{1+\eta}} 1 d \theta+\int_0^{\theta_{1+\eta}} \log \zeta(\sigma) d \theta+\frac{1}{2 T} \int_0^{\theta_{1+\eta}} L_{-1}^{\star}(\theta) d \theta,
\end{equation*}
\begin{equation*}
    \begin{aligned}
        &\int_{\theta_{1+\eta}}^{\theta_1} F_{c, r}(\theta) d \theta\\&\le \log T \int_{\theta_{1+\eta}}^{\theta_1} 1 d \theta+\frac{c_2}{\eta} \log \log T \int_{\theta_{1+\eta}}^{\theta_1}(1+\eta-\sigma) d \theta+\frac{\log (c_1\cdot 1.00212^{c_2})}{\eta} \int_{\theta_{1+\eta}}^{\theta_1}(1+\eta-\sigma) d \theta\\&+\frac{\log \zeta(1+\eta)}{\eta} \int_{\theta_{1+\eta}}^{\theta_1}(\sigma-1) d \theta+\frac{1}{2 T} \int_{\theta_{1+\eta}}^{\theta_1} L_{Q_0}^{\star}(\theta) d \theta+\frac{c_2}{2 \eta T \log T} \int_{\theta_{1+\eta}}^{\theta_1}(1+\eta-\sigma) L_{Q_0}^{\star}(\theta) d \theta,
    \end{aligned}
\end{equation*}
\begin{equation*}
    \begin{aligned}
        &\int_{\theta_{\frac{1}{2}}}^{\theta_0} F_{c, r}(\theta) d \theta\\&\le \log T \int_{\theta_{\frac{1}{2}}}^{\theta_0} 1 d \theta+\frac{\log T}{2} \int_{\theta_{\frac{1}{2}}}^{\theta_0} (1-2 \sigma+4 k_2 \sigma) d \theta+\log \log T \int_{\theta_{\frac{1}{2}}}^{\theta_0} (c_2(1-2 \sigma)+2 k_3 \sigma) d \theta\\&+\log \left(\frac{c_1\cdot 1.00212^{c_2}}{\sqrt{2 \pi}}\right) \int_{\theta_{\frac{1}{2}}}^{\theta_0} (1-2 \sigma) d \theta+2 \log (k_1\cdot 1.00212^{k_3}) \int_{\theta_{\frac{1}{2}}}^{\theta_0} \sigma d \theta+\frac{1}{2 T} \int_{\theta_{\frac{1}{2}}}^{\theta_0} L_{-1}^{\star}(\theta) d \theta\\&+\frac{1}{4 T} \int_{\theta_{\frac{1}{2}}}^{\theta_0}\left(1-2 \sigma+4 k_2 \sigma\right) L_{Q_{11}}^{\star}(\theta) d \theta+\frac{1}{2 T \log T} \int_{\theta_{\frac{1}{2}}}^{\theta_0}\left(c_2(1-2 \sigma)+2 k_3 \sigma\right) L_{Q_{11}}^{\star}(\theta) d \theta ,
    \end{aligned}
\end{equation*}
\begin{equation*}
    \begin{aligned}
        \int_{\theta_0}^{\theta_{-\eta}} F_{c, r}(\theta) d \theta &\le \log T \int_{\theta_0}^{\theta_{-\eta}} \left(1-\frac{\sigma(1+2 \eta)}{2 \eta}+\frac{\sigma+\eta}{2 \eta}\right) d \theta+\log \log T \int_{\theta_0}^{\theta_{-\eta}} \frac{\sigma+\eta}{\eta} c_2 d \theta\\&+\int_{\theta_0}^{\theta_{-\eta}}\left(-\frac{\sigma}{\eta} \log \left(\frac{1+\eta}{c_1(2 \pi)^\eta}\right)+\log \left(\frac{c_1\cdot 1.00212^{c_2}}{\sqrt{2 \pi}}\right)\right) d \theta+\frac{1}{2 T} \int_{\theta_0}^{\theta_{-\eta}} L_{-1}^{\star}(\theta) d \theta\\&+\frac{1}{T} \int_{\theta_0}^{\theta_{-\eta}}\left(-\frac{\sigma(1+2 \eta)}{4 \eta}+\frac{\sigma+\eta}{4 \eta}\right) L_{Q_{10}}^{\star}(\theta) d \theta+\frac{1}{2 T \log T} \int_{\theta_0}^{\theta_{-\eta}} \frac{\sigma+\eta}{\eta} c_2 L_{Q_{10}}^{\star}(\theta) d \theta
    \end{aligned}
\end{equation*}
and
\begin{equation*}
    \begin{aligned}
        \int_{\theta_{-\eta}}^\pi F_{c, r}(\theta) d \theta 
        &\leq \log T \int_{\theta_{-\eta}}^\pi 1+\frac{1-2 \sigma}{2} d \theta+\int_{\theta_{-\eta}}^\pi \log \zeta(1-\sigma) d \theta-\log 2 \pi \int_{\theta_{-\eta}}^\pi \frac{1-2 \sigma}{2} d \theta\\
        &+\frac{1}{2 T} \int_{\theta_{-\eta}}^\pi L_{-1}^{\star}(\theta) d \theta+\frac{1}{T} \int_{\theta_{-\eta}}^{\theta_{-\frac{1}{2}}} \frac{1-2 \sigma}{4} L_1^{\star}(\theta) d \theta\\
        &+\sum_{j=1}^{\infty} \int_{\theta_{-j+\frac{1}{2}}}^{\theta_{-j-\frac{1}{2}}}\left(\frac{1-2 \sigma-2 j}{4} L_{j+1}(\theta)+\frac{1}{2} \sum_{k=1}^j L_{k-1}(\theta)\right) d \theta.
    \end{aligned}
\end{equation*}
Observe that
\begin{equation*}
    \begin{aligned}
        &\int_{\theta_1}^{\theta_{\sigma_{n+4}}} F_{c, r}(\theta) d \theta\\&\le \frac{\log T}{(2^{n+4}-2)(1-\sigma_{n+4})}\int_{\theta_1}^{\theta_{\sigma_{n+4}}}\left((2^{n+4}-1)(1-\sigma)+(2^{n+4}-2)(\sigma-\sigma_{n+4})\right)d\theta\\&+\frac{\log\log T}{1-\sigma_{n+4}} \int_{\theta_1}^{\theta_{\sigma_{n+4}}}\left((1-\sigma)+c_2(\sigma-\sigma_{n+4})\right)d\theta\\&+\frac{\log(1.546\cdot 1.00212)}{(1-\sigma_{n+4})}\int_{\theta_1}^{\theta_{\sigma_{n+4}}}(1-\sigma)d\theta+\frac{\log (c_1\cdot 1.00212^{c_2})}{(1-\sigma_{n+4})}\int_{\theta_1}^{\theta_{\sigma_{n+4}}}(\sigma-\sigma_{n+4})d\theta\\&+\frac{1}{2T(2^{n+4}-2)(1-\sigma_{n+4})}\int_{\theta_1}^{\theta_{\sigma_{n+4}}}\left((2^{n+4}-1)(1-\sigma)+(2^{n+4}-2)(\sigma-\sigma_{n+4})\right)L_{Q_{0,n+4}}^{\star}(\theta)d\theta\\&+\frac{1}{2T\log T(1-\sigma_{n+4})} \int_{\theta_1}^{\theta_{\sigma_{n+4}}}\left((1-\sigma)+c_2(\sigma-\sigma_{n+4})\right)L_{Q_{0,n+4}}^{\star}(\theta)d\theta.
    \end{aligned}
\end{equation*}
Hence, for every $0\le h\le n-1$, we have
\begin{equation*}
    \begin{aligned}
&\int_{\theta_{\sigma_{5+h}}}^{\theta_{\sigma_{4+h}}} F_{c, r}(\theta) d \theta\\&\le \log\log T\int_{\theta_{\sigma_{5+h}}}^{\theta_{\sigma_{4+h}}} 1 d \theta+\frac{\log T}{(2^{h+4}-2)(2^{h+5}-2)(\sigma_{5+h}-\sigma_{h+4})}\\& \times \int_{\theta_{\sigma_{5+h}}}^{\theta_{\sigma_{4+h}}}\left((2^{h+5}-2)(2^{h+4}-1)(\sigma_{5+h}-\sigma)+(2^{h+4}-2)(2^{h+5}-1)(\sigma-\sigma_{h+4})\right)d\theta\\&+\log(1.546\cdot 1.00212)\int_{\theta_{\sigma_{5+h}}}^{\theta_{\sigma_{4+h}}}1d\theta+\frac{1}{2T\log T}\int_{\theta_{\sigma_{5+h}}}^{\theta_{\sigma_{4+h}}}L_{Q_{4+h,5+h}}^{\star}(\theta)d\theta\\&+\frac{1}{2T(2^{h+4}-2)(2^{h+5}-2)(\sigma_{5+h}-\sigma_{h+4})}\\
      & \times \int_{\theta_{\sigma_{5+h}}}^{\theta_{\sigma_{4+h}}}\left((2^{h+5}-2)(2^{h+4}-1)(\sigma_{5+h}-\sigma)+(2^{h+4}-2)(2^{h+5}-1)(\sigma-\sigma_{h+4})\right)L_{Q_{4+h,5+h}}^{\star}(\theta)d\theta.
    \end{aligned}
\end{equation*}
Finally, we have
\begin{equation*}
    \begin{aligned}
        \int_{\theta_{\sigma_4}}^{\theta_{1/2}}F_{c, r}(\theta) d \theta
        &\le \frac{\log T}{14(\sigma_{4}-\frac{1}{2})}\int_{\theta_{\sigma_{4}}}^{\theta_{{1/2}}}\left(14(k_2+1)(\sigma_{4}-\sigma)+15\left(\sigma-\frac{1}{2}\right)\right)d\theta\\&+ \frac{\log\log T}{(\sigma_{4}-\frac{1}{2})}\int_{\theta_{\sigma_{4}}}^{\theta_{{1/2}}}\left(k_3(\sigma_{4}-\sigma)+\left(\sigma-\frac{1}{2}\right)\right)d\theta\\&+ \frac{\log (k_1\cdot 1.00212^{k_3})}{\sigma_{4}-\frac{1}{2}}\int_{\theta_{\sigma_4}}^{\theta_{1/2}}(\sigma_{4}-\sigma)d\theta+\frac{\log(1.546\cdot 1.00212)}{\sigma_{4}-\frac{1}{2}}\int_{\theta_{\sigma_4}}^{\theta_{1/2}}\left(\sigma-\frac{1}{2}\right)d\theta\\&+\frac{1}{2T14(\sigma_{4}-\frac{1}{2})}\int_{\theta_{\sigma_{4}}}^{\theta_{{1/2}}}\left(14(k_2+1)(\sigma_{4}-\sigma)+15\left(\sigma-\frac{1}{2}\right)\right)L_{Q_{2}}^{\star}(\theta)d\theta\\&+ \frac{1}{2T\log T(\sigma_{4}-\frac{1}{2})}\int_{\theta_{\sigma_{4}}}^{\theta_{{1/2}}}\left(k_3(\sigma_{4}-\sigma)+\left(\sigma-\frac{1}{2}\right)\right)L_{Q_{2}}^{\star}(\theta)d\theta.
    \end{aligned}
\end{equation*}

With the above estimates in hand, we are ready to estimate the constants.
\subsubsection{Constant $C_1$}
Setting
\begin{align*}
    \overline{C}_1 &=\frac{1}{(2^{n+4}-2)(1-\sigma_{n+4})}\int_{\theta_1}^{\theta_{\sigma_{n+4}}}\left((2^{n+4}-1)(1-\sigma)+(2^{n+4}-2)(\sigma-\sigma_{n+4})\right)d\theta\\&+\sum_{h=0}^{n-1}\frac{1}{(2^{h+4}-2)(2^{h+5}-2)(\sigma_{5+h}-\sigma_{h+4})}\\
    & \times \int_{\theta_{\sigma_{5+h}}}^{\theta_{\sigma_{4+h}}}\left((2^{h+5}-2)(2^{h+4}-1)(\sigma_{5+h}-\sigma)+(2^{h+4}-2)(2^{h+5}-1)(\sigma-\sigma_{h+4})\right)d\theta\\&+\frac{1}{14(\sigma_{4}-\frac{1}{2})}\int_{\theta_{\sigma_{4}}}^{\theta_{{1/2}}}\left(14(k_2+1)(\sigma_{4}-\sigma)+15\left(\sigma-\frac{1}{2}\right)\right)d\theta\\&+ \frac{1}{2} \int_{\theta_{\frac{1}{2}}}^{\theta_0}( 1-2 \sigma+4 k_2 \sigma) d \theta+\int_{\theta-\eta}^\pi \frac{1-2 \sigma}{2} d \theta+(\theta_1-\theta_{1/2})
    +\int_{\theta_0}^{\theta_{-\eta}} \left(-\frac{\sigma(1+2 \eta)}{2 \eta}+\frac{\sigma+\eta}{2 \eta}\right) d \theta,
\end{align*}
we can express $C_1$ as
\begin{equation}\label{C1}
C_1=\frac{\overline{C}_1}{2 \pi \log (r /(c-1 / 2))}.
\end{equation}
\subsubsection{Constants $C_2$ and $C'_2$}
Given
\begin{align*}
    \overline{C}_2&=\frac{1}{1-\sigma_{n+4}} \int_{\theta_1}^{\theta_{\sigma_{n+4}}}\left((1-\sigma)+c_2(\sigma-\sigma_{n+4})\right)d\theta\\&+\sum_{h=0}^{n-1}\int_{\theta_{\sigma_{5+h}}}^{\theta_{\sigma_{4+h}}}1d\theta+ \frac{1}{(\sigma_{4}-\frac{1}{2})}\int_{\theta_{\sigma_{4}}}^{\theta_{{1/2}}}\left(k_3(\sigma_{4}-\sigma)+\left(\sigma-\frac{1}{2}\right)\right)d\theta\\&+\frac{c_2}{\eta} \int_{\theta_{1+\eta}}^{\theta_1}(1+\eta-\sigma) d \theta++\int_{\theta_{\frac{1}{2}}}^{\theta_0} (c_2(1-2 \sigma)+2 k_3 \sigma) d \theta+\int_{\theta_0}^{\theta_{-\eta}} \frac{\sigma+\eta}{\eta} c_2 d \theta\\&=\frac{1}{1-\sigma_{n+4}} \int_{\theta_1}^{\theta_{\sigma_{n+4}}}\left((1-\sigma)+c_2(\sigma-\sigma_{n+4})\right)d\theta+ \theta_{\sigma_4}\\&-\theta_{\sigma_{n+4}}+ \frac{1}{(\sigma_{4}-\frac{1}{2})}\int_{\theta_{\sigma_{4}}}^{\theta_{{1/2}}}\left(k_3(\sigma_{4}-\sigma)+\left(\sigma-\frac{1}{2}\right)\right)d\theta\\&+\frac{c_2}{\eta} \int_{\theta_{1+\eta}}^{\theta_1}(1+\eta-\sigma) d \theta++\int_{\theta_{\frac{1}{2}}}^{\theta_0} (c_2(1-2 \sigma)+2 k_3 \sigma) d \theta+\int_{\theta_0}^{\theta_{-\eta}} \frac{\sigma+\eta}{\eta} c_2 d \theta,
    \end{align*}
 by Lemma \ref{prop3.1}, our $C_2$ can be written as
\begin{equation}\label{C2}
C_2=\frac{\overline{C}_2}{2 \pi \log (r /(c-1 / 2))};
\end{equation}
while Lemma \ref{newprop3.1} yields that
\begin{equation}\label{C2p}
C'_2=\frac{\overline{C}_2}{2 \pi \log (r /(c-1 / 2))}+\frac{2.00204}{2\log (r /(c-1 / 2))}.
\end{equation}

\begin{remark}
  To establish \eqref{C2p}, we used the bound $\log(b\log T)\le B\log\log T$ for some  $B$ depending only on $b$ and our range of $T$. One may instead use $\log(b \log T )= \log b + \log\log T$, which would lead to slightly smaller $C_2'$ at the expense of $C'_3$.  
\end{remark}

\subsubsection{Constants $C_3$ and $C'_3$}
By Lemma \ref{prop3.1}, $C_3$ can be expressed as
\begin{equation}\label{C3}
 \begin{aligned}
    C_3&=\frac{7}{8}+\frac{1}{4}+\frac{1}{50T}+\frac{1}{\pi }\log\zeta(\sigma_1)+\frac{1}{2\log (r /(c-1 / 2))} \log \frac{\zeta(c)}{\zeta(2 c)}+\frac{1}{2}\left(\frac{640 \delta-112}{1536\left(3 T_0-1\right)}+\frac{1}{2^{10}}\right)\\
    &+\frac{1}{2 \pi \log (r /(c-1 / 2))}\left(D_3+\kappa_1\left(J_1\right)+\kappa_2\left(J_2\right)+\kappa_3\left(T_0\right)\right).
\end{aligned}   
\end{equation}
In addition, Lemma \ref{newprop3.1} yields
\begin{equation}\label{C3p}
    \begin{aligned}
    C'_3&=\frac{7}{8}+\frac{1}{4}+\frac{1}{50T}+\frac{1}{\pi }\log\zeta(\sigma_1)+\frac{1}{2}\left(\frac{640 \delta-112}{1536\left(3 T_0-1\right)}+\frac{1}{2^{10}}\right)\\&+\frac{1}{2 \pi \log (r /(c-1 / 2))}\left(D_3+\kappa_1\left(J_1\right)+\kappa_2\left(J_2\right)+\kappa_3\left(T_0\right)\right),
\end{aligned}
\end{equation}
where
\begin{align*}
    D_3&=\frac{\log (c_1\cdot 1.00212^{c_2})}{\eta} \int_{\theta_{1+\eta}}^{\theta_1}(1+\eta-\sigma) d \theta+\frac{\log \zeta(1+\eta)}{\eta} \int_{\theta_{1+\eta}}^{\theta_1}(\sigma-1) d \theta\\
    & +\frac{\log(1.546\cdot 1.00212)}{(1-\sigma_{n+4})}\int_{\theta_1}^{\theta_{\sigma_{n+4}}}(1-\sigma)d\theta+\frac{\log (c_1\cdot 1.00212^{c_2})}{(1-\sigma_{n+4})}\int_{\theta_1}^{\theta_{\sigma_{n+4}}}(\sigma-\sigma_{n+4})d\theta\\
    &+\sum_{h=0}^{n-1}\log(1.546\cdot 1.00212)\int_{\theta_{\sigma_{5+h}}}^{\theta_{\sigma_{4+h}}}d\theta + \frac{\log( k_1\cdot 1.00212^{k_3})}{\sigma_{4}-\frac{1}{2}}\int_{\theta_{\sigma_4}}^{\theta_{1/2}}(\sigma_{4}-\sigma)d\theta\\
    &+\frac{\log(1.546\cdot 1.00212)}{\sigma_{4}-\frac{1}{2}}\int_{\theta_{\sigma_4}}^{\theta_{1/2}}\left(\sigma-\frac{1}{2}\right)d\theta+\log \left(\frac{c_1\cdot 1.00212^{c_2}}{\sqrt{2 \pi}}\right) \int_{\theta_{\frac{1}{2}}}^{\theta_0} (1-2 \sigma) d \theta\\&+2 \log (k_1 \cdot 1.00212^{k_3})\int_{\theta_{\frac{1}{2}}}^{\theta_0} \sigma d \theta+\int_{\theta_0}^{\theta_{-\eta}}\left(-\frac{\sigma}{\eta} \log \left(\frac{1+\eta}{c_1(2 \pi)^\eta}\right)+\log \left(\frac{c_1\cdot 1.00212^{c_2}}{\sqrt{2 \pi}}\right)\right) d \theta\\&-(\log 2 \pi) \int_{\theta_{-\eta}}^\pi \frac{1-2 \sigma}{2} d \theta+\frac{\log \zeta(1+\eta)+\log \zeta(c)}{2}\left(\theta_{1+\eta}-\frac{\pi}{2}\right)+\frac{\pi}{4 J_1} \log \zeta(c)\\&+\frac{\log \zeta(1+\eta)+\log \zeta(c)}{2}\left(\theta_{1-c}-\theta_{-\eta}\right)+\frac{\pi-\theta_{1-c}}{2 J_2} \log \zeta(c),
\end{align*}
\[
\kappa_1\left(J_1\right)=\frac{\pi}{4 J_1}\left(\log \zeta(c+r)+2 \sum_{j=1}^{J_1-1} \log \zeta\left(c+r \cos \frac{\pi j}{2 J_1}\right)\right),
\]
\begin{equation*}
\begin{aligned}
\kappa_2\left(J_2\right)= & \frac{\pi-\theta_{1-c}}{2 J_2}(\log \zeta(1-c+r)  +2 \sum_{j=1}^{J_2-1} \log \zeta\left(1-c-r \cos \left(\frac{\pi j}{J_2}+\left(1-\frac{j}{J_2}\right) \theta_{1-c}\right)\right),
\end{aligned}
\end{equation*}
and
\begin{align*}
    \kappa_3(T_0)=
\frac{1}{2 T_0} \max \left\{0,\mathcal{M}_1\right\}+\frac{1}{2T_0\log\log T_0}\max \left\{0,\mathcal{M}_2\right\},
\end{align*}
with 
\begin{align*}
    &\mathcal{M}_1=\int_0^{\theta_{1+\eta}} L_{-1}^{\star}(\theta)+\int_{\theta_{1+\eta}}^{\theta_1} L_{Q_0}^{\star}(\theta) d \theta+\int_{\theta_{\frac{1}{2}}}^{\theta_0} \left(L_{-1}^{\star}(\theta) +\frac{1}{2} \left(1-2 \sigma+4 k_2 \sigma\right) L_{Q_{11}}^{\star}(\theta)\right) d \theta\\
&+\frac{1}{(2^{n+4}-2)(1-\sigma_{n+4})}\int_{\theta_1}^{\theta_{\sigma_{n+4}}}\left((2^{n+4}-1)(1-\sigma)+(2^{n+4}-2)(\sigma-\sigma_{n+4})\right)L_{Q_{0,n+4}}^{\star}(\theta)d\theta\\
&+\sum_{h=0}^{n-1}\frac{1}{(2^{h+4}-2)(2^{h+5}-2)(\sigma_{5+h}-\sigma_{h+4})}\\
& \times \int_{\theta_{\sigma_{5+h}}}^{\theta_{\sigma_{4+h}}}\left((2^{h+5}-2)(2^{h+4}-1)(\sigma_{5+h}-\sigma)+(2^{h+4}-2)(2^{h+5}-1)(\sigma-\sigma_{h+4})\right)L_{Q_{4+h,5+h}}^{\star}(\theta)d\theta\\
&+\frac{1}{14(\sigma_{4}-\frac{1}{2})}\int_{\theta_{\sigma_{4}}}^{\theta_{{1/2}}}\left(14(k_2+1)(\sigma_{4}-\sigma)+15\left(\sigma-\frac{1}{2}\right)\right)L_{Q_{2}}^{\star}(\theta)d\theta\\
&+\int_{\theta_0}^{\theta_{-\eta}}\left( L_{-1}^{\star}(\theta)+\left(-\sigma+\frac{1}{2}\right) L_{Q_{10}}^{\star}(\theta)\right) d \theta +\int_{\theta_{-\eta}}^\pi\left( L_{-1}^{\star}(\theta)+\frac{1-2 \sigma}{2} L_1^{\star}(\theta)\right) d \theta
\end{align*}
and
\begin{align*}
    &\mathcal{M}_2=\int_{\theta_{1+\eta}}^{\theta_1} \frac{c_2}{\eta}(1+\eta-\sigma) L_{Q_0}^{\star}(\theta) d \theta +\frac{1}{1-\sigma_{n+4}} \int_{\theta_1}^{\theta_{\sigma_{n+4}}}\left((1-\sigma)+c_2(\sigma-\sigma_{n+4})\right)L_{Q_{0,n+4}}^{\star}(\theta)d\theta\\
&+\sum_{h=0}^{n-1}\int_{\theta_{\sigma_{5+h}}}^{\theta_{\sigma_{4+h}}}L_{Q_{4+h,5+h}}^{\star}(\theta)d\theta + \frac{1}{(\sigma_{4}-\frac{1}{2})}\int_{\theta_{\sigma_{4}}}^{\theta_{{1/2}}}\left(k_3(\sigma_{4}-\sigma)+\left(\sigma-\frac{1}{2}\right)\right)L_{Q_{2}}^{\star}(\theta)d\theta\\
&+\int_{\theta_{\frac{1}{2}}}^{\theta_0}\left(c_2(1-2 \sigma)+2 k_3 \sigma\right) L_{Q_{11}}^{\star}(\theta) d \theta+\int_{\theta_0}^{\theta_{-\eta}} \frac{\sigma+\eta}{\eta} c_2 L_{Q_{10}}^{\star}(\theta) d \theta.
\end{align*}
Hence, we complete the proof of Theorem \ref{generalth1}.

\section{Proof of Theorem \ref{th2}}
 We begin by recalling that
\begin{equation*}
S(T)=\frac{1}{\pi} \Delta_{\mathcal{C}_0} \arg \zeta(s)=\frac{1}{\pi} \Delta_{\mathcal{C}_1} \arg \zeta(s)+\frac{1}{\pi} \Delta_{\mathcal{C}_2} \arg (s-1) \zeta(s)-\frac{1}{\pi} \Delta_{\mathcal{C}_2} \arg (s-1),
\end{equation*}
where
\begin{equation*}
\left|\Delta_{\mathcal{C}_1} \arg \zeta(s)\right| \leq \log \zeta\left(\sigma_1\right)
\end{equation*}
and
\begin{equation*}
\left|\Delta_{\mathcal{C}_2} \arg (s-1)\right|=\arctan \left(\frac{\sigma_1-1}{T}\right)+\arctan \left(\frac{1}{2 T}\right) \leq \arctan \left(\frac{\sigma_1-1}{T_0}\right)+\arctan \left(\frac{1}{2 T_0}\right)
\end{equation*}
for $T\ge T_0$. By \eqref{beforeintegral} and Theorem \ref{generalth1}, we know
\begin{equation*}
    \begin{aligned}
        \left|N(T)-\frac{T}{2\pi} \log \left(\frac{T}{2 \pi e}\right)\right|&\le \frac{7}{8}+ \frac{1}{50T}+\frac{1}{\pi} \log \zeta\left(\sigma_1\right)+\frac{1}{\pi}\left|\Delta_{\mathcal{C}_2} \arg ((s-1) \zeta(s))\right|\\&\le C_1\log T +\min\{C_2\log\log T+C_3,C'_2\log\log T+C'_3\}
    \end{aligned}
\end{equation*}
and hence
\begin{align*}
    &\frac{1}{\pi}\left|\Delta_{\mathcal{C}_2} \arg ((s-1) \zeta(s))\right|\\&\le C_1\log T+\min\{C_2\log\log T+C_3,C'_2\log\log T+C'_3\}-\frac{7}{8}-\frac{1}{50T}-\frac{1}{\pi} \log \zeta\left(\sigma_1\right).
\end{align*}
Therefore, for $T\ge T_0$, the following estimate holds:
\begin{equation*}
    |S(T)|\le C_1\log T+\min\{C_2\log\log T+\tilde{C_3},C'_2\log\log T+\tilde{C'_3}\},
\end{equation*}
where
\begin{equation}\label{tildeC3}
  \tilde{C_3}=C_3-\frac{7}{8}-\frac{1}{50T}+\frac{1}{\pi}\left(\arctan \left(\frac{\sigma_1-1}{T_0}\right)+\arctan \left(\frac{1}{2 T_0}\right)\right)  
\end{equation}
and
\begin{equation}\label{TildeC3p}
    \tilde{C'_3}=C'_3-\frac{7}{8}-\frac{1}{50T}+\frac{1}{\pi}\left(\arctan \left(\frac{\sigma_1-1}{T_0}\right)+\arctan \left(\frac{1}{2 T_0}\right)\right).
\end{equation}
This completes the proof of Theorem \ref{th2}.

\section{Proofs of Theorem \ref{th1} and Corollary \ref{corst}}
First, we apply Theorem \ref{generalth1} and Theorem \ref{th2} with $T_0=30610046000$. Furthermore, we take $J_1=64$, $J_2=39$ and we choose the  parameters $Q_i$ as
\[
(Q_0,Q_1,Q_2,Q_3,Q_4,Q_5,Q_6,Q_7,Q_8,Q_9,Q_{10},Q_{11})=(1,1.18,1.18,3.9,1,1,1,1,1,1,2.3,3.9)
\]
to establish the following:
 \begin{table}[h]
\def\arraystretch{1.3}
\centering
        \begin{tabular}{|c|c|c|c|c|c|c|c|c|c|}
        \hline
    $c$   & $r$ & $\eta$ & $C_1$ & $C_2$ & $C_2'$& $C_3$& $C_3'$ & $\Tilde{C_3}$ & $\Tilde{C'_3}$ \\ \hline
     $1.000225$  & $1.000605$ & $0.000158$ & $0.10076$ & $0.24460$ & $1.68845$ & $8.08344$ & $2.38456$ & $7.20844$ & $1.50956$ \\\hline  
     $1.070007$ & $1.182997$ & $0.069901$ & $0.11000$ & $0.17447$ & $1.54543$ & $3.71067$ & $2.15392$ & $2.83567$ & $1.27892$\\\hline
     $1.043400$ & $1.250450 $ & $ 0.040000$ & $0.11200$ & $0.12567$ & $ 1.32678$ & $ 3.77417$ & $2.14783$ & $2.89916$ & $ 1.27283$\\\hline
     $1.000060$ & $1.499556$ & $1.542440\cdot 10^{-5}$ & $0.12355$ & $0.06782$ & $ 0.97933$ & $6.25796$ & $2.05854$ & $5.38296$ & $1.18354$\\\hline
     $1.499159$ & $1.998357$ & $0.499050 $ & $0.16732$ & $0.17266$ & $1.61679$ & $1.96334$ & $1.40271$ & $1.08834$& $0.52771$ \\\hline
  \end{tabular} \caption{Some admissible values for $C_1,C_2,C'_2,C_3,C'_3, $ $\Tilde{C_3}$, $\Tilde{C'_3}$}\label{table1} 
  \end{table}

Note that the first and third rows of Table \ref{table1} give Theorem \ref{th1} and Corollary \ref{corst} for $T\ge 30610046000$. In addition, the second row of Table \ref{table1} improves on \cite{PLATT2015842} for all $T\ge \exp(113)$.\footnote{However, unfortunately, the proof in \cite{PLATT2015842} relies on \cite{Tr14-2}, which contains an error as pointed out earlier.}

  Finally, for $e\le T\le 30610046000$, in order to estimate $|S(T)|$, we use the known bound \eqref{plattbound} computed by Platt, which is in particular sharper than
  \[0.10076\log T+\min\{0.24460\log\log T+7.20844, 1.68845\log\log T+1.50956\}\]
  for every $e\le T\le 30610046000$. Hence, Corollary \ref{corst} follows by combining the two cases.
  
  Finally, it remains to prove Theorem \ref{th1} for $e\le T\le 30610046000$. As in \cite{HASANALIZADE2022219}, since
\begin{equation}\label{NT-exp}
N(T)=S(T)+\frac{T}{2\pi} \log \left(\frac{T}{2 \pi e}\right)+\frac{7}{8}+\frac{g(T)}{2},
\end{equation}
for $e\le T\le 30610046000$, we have
  \begin{equation*}
\left|N(T)-\frac{T}{2 \pi} \log \left(\frac{T}{2 \pi e}\right)\right| \leq|S(T)|+\frac{1}{2}|g(T)|+\frac{7}{8}\leq 2.5167+\frac{1}{50 e}+\frac{7}{8},
\end{equation*}
which is always smaller than 
\[
0.10076\log T+0.24460\log\log T+8.08344
\]
for every $e\le T\le 30610046000$. Therefore, by combining the two cases, Theorem \ref{th1} follows.

\section{Proof of Corollary \ref{cor1} }\label{proofcor12}
In this section, we will prove Corollary \ref{cor1}. It follows from \eqref{NT-exp} that
\begin{equation}\label{eq:t}
    \begin{aligned}
        &N(T+1)-N(T)\\
        &=S(T+1)-S(T)+\frac{T+1}{2\pi} \log \left(\frac{T+1}{2 \pi e}\right)-\frac{T}{2\pi} \log \left(\frac{T}{2 \pi e}\right)+\frac{g(T+1)-g(T)}{2}
    \end{aligned}
\end{equation}
and
\begin{equation}\label{eq:t-1}
    \begin{aligned}
        &N(T+1)-N(T-1)\\&=S(T+1)-S(T-1)+\frac{T+1}{2\pi} \log \left(\frac{T+1}{2 \pi e}\right)-\frac{T-1}{2\pi} \log \left(\frac{T-1}{2 \pi e}\right)+\frac{g(T+1)-g(T-1)}{2}.
    \end{aligned}
\end{equation}
Writing
\[
\log \left(\frac{T+1}{2 \pi e}\right)=\log \left(\frac{T}{2 \pi e}\right)+\log \left(1+\frac{1}{T}\right)
\quad\text{and}\quad
\log \left(\frac{T-1}{2 \pi e}\right)=\log \left(\frac{T}{2 \pi e}\right)+\log \left(1-\frac{1}{T}\right),
\]
and using the Taylor expansions at $x=0$ and $y=\infty$
\begin{align*}
     \log \left(1+x\right)=x-\frac{x^2}{2}+\frac{x^3}{3}-\frac{x^4}{4}+O(x^5)
     \quad\text{and}\quad
     \log\left(\frac{1+y}{-1+y}\right)=\frac{2}{y}+\frac{2}{3y^3}+\frac{2}{5y^5}+O\left(\frac{1}{y^6}\right)
\end{align*}
with $x=1/T$, $x= -1/T$, $y=T$, we derive that, for $T\ge 2$,
\begin{equation}\label{upper1}
    \begin{aligned}
        &\frac{T+1}{2\pi} \log \left(\frac{T+1}{2 \pi e}\right)-\frac{T}{2\pi} \log \left(\frac{T}{2 \pi e}\right)\\
        &=\frac{T}{2\pi}\log \left(1+\frac{1}{T}\right)+\frac{1}{2\pi }\left(\log \left(\frac{T}{2 \pi e}\right)+\log \left(1+\frac{1}{T}\right)\right)\\
        &=\frac{1}{2\pi}\left(1-\frac{1}{2T}+\frac{1}{3T^2}-\cdots\right)+\frac{1}{2\pi}\left(\frac{1}{T}-\frac{1}{2T^2}+\frac{1}{3T^3}-\cdots\right)+\frac{1}{2\pi}\log T-\frac{1}{2\pi}\log(2\pi e)\\
        &<\frac{1}{2\pi}\log T+\frac{3}{4\pi}-\frac{1}{2\pi}\log(2\pi e),
        \end{aligned}
\end{equation}
and, similarly, 
\begin{equation}\label{upper2}
    \begin{aligned}
        &\frac{T+1}{2\pi} \log \left(\frac{T+1}{2 \pi e}\right)-\frac{T-1}{2\pi} \log \left(\frac{T-1}{2 \pi e}\right)\\
        &=\frac{T}{2\pi}\left(\log \left(1+\frac{1}{T}\right)-\log \left(1-\frac{1}{T}\right)\right)+\frac{1}{2\pi }\left(2\log \left(\frac{T}{2 \pi e}\right)+\log \left(1+\frac{1}{T}\right)+\log \left(1-\frac{1}{T}\right)\right)\\
        &<\frac{1}{\pi}\log T+\frac{\log 3}{\pi}-\frac{1}{\pi}\log(2\pi e).
        \end{aligned}
\end{equation}
Furthermore, we observe that
\begin{equation*}
    \begin{aligned}
        \left|\frac{g(T+1)-g(T)}{2}\right|&\le \max\{|g(T+1)|,|g(T)|\}\le \frac{1}{25T}, \\
        \left|\frac{g(T+1)-g(T-1)}{2}\right|&\le \max\{|g(T+1)|,|g(T-1)|\}\le \frac{1}{25(T-1)}.
    \end{aligned}
\end{equation*}
Finally,  if $T+1> 30610046000$, then Theorem \ref{th2} implies that
\begin{equation*}
    \begin{aligned}
       |S(T+1)-S(T)|&\le 2.00001C_1\log T+2.00001\min\{C_2\log\log T +\Tilde{C_3},C'_2\log\log T +\Tilde{C'_3}\}
\\
     |S(T+1)-S(T-1)|&\le 2.00001C_1\log T+2.00001\min\{C_2\log\log T +\Tilde{C_3},C'_2\log\log T +\Tilde{C'_3}\}.
    \end{aligned}
\end{equation*}
For $3\le T+1\le 30610046000$, by \eqref{plattbound}, we obtain
\begin{equation*}
    \begin{aligned}
  |S(T+1)-S(T)|&\le 2\cdot 2.5167\le 5.0334
\\
     |S(T+1)-S(T-1)|&\le 2\cdot 2.5167\le 5.0334.
    \end{aligned}
\end{equation*}
Substituting the estimates above into \eqref{eq:t} and \eqref{eq:t-1}, we can conclude that, for $T+1> 30610046000$, one has
\begin{equation*}
    \begin{aligned}
        &N(T+1)-N(T)\\&\le \left(\frac{1}{2\pi}+2C_1\right)\log T+2\min\{C_2\log\log T +\Tilde{C_3},C'_2\log\log T +\Tilde{C'_3}\}+\frac{3}{4\pi}-\frac{1}{2\pi}\log(2\pi e)+\frac{1}{25T},
    \end{aligned}
\end{equation*}
and
 \begin{equation*}
    \begin{aligned}
      &  N(T+1)-N(T-1)\\
      &\le \left(\frac{1}{\pi}+2C_1\right)\log T+\min\{2C_2\log\log T+\mathcal{D}_3,2C'_2\log\log T+\mathcal{D'}_3\}+\frac{1}{25(T-1)},
    \end{aligned}
\end{equation*}
where
\[
\mathcal{D}_3=2\Tilde{C_3}+\frac{\log 3-\log(2\pi e)}{\pi}
\quad\text{and}\quad
\mathcal{D'}_3=2\Tilde{C'_3}+\frac{\log 3-\log(2\pi e)}{\pi}.
\]
On the other hand, if  $3\le T+1\le 30610046000$, then we deduce
\begin{equation*}
    N(T+1)-N(T)\le \frac{1}{2\pi}\log T + 5.0334+\frac{3}{4\pi}-\frac{1}{2\pi}\log(2\pi e)+\frac{1}{25T}\le  \frac{1}{2\pi}\log T+4.8405
\end{equation*}
and 
\begin{equation*}
    N(T+1)-N(T-1)\le \frac{1}{\pi}\log T + 5.0334+\frac{\log 3}{\pi}-\frac{1}{\pi}\log(2\pi e)\le  \frac{1}{\pi}\log T+4.4798.
\end{equation*}

Now, it remains to find the lower bounds. Similarly to \eqref{upper1} and \eqref{upper2}, we have
\begin{equation*}
    \begin{aligned}
        &\frac{T+1}{2\pi} \log \left(\frac{T+1}{2 \pi e}\right)-\frac{T}{2\pi} \log \left(\frac{T}{2 \pi e}\right)>\frac{1}{2\pi}\log T+\frac{1}{2\pi}-\frac{1}{2\pi}\log(2\pi e),
        \end{aligned}
\end{equation*}
and
\begin{equation*}
    \begin{aligned}
        &\frac{T+1}{2\pi} \log \left(\frac{T+1}{2 \pi e}\right)-\frac{T-1}{2\pi} \log \left(\frac{T-1}{2 \pi e}\right)>\frac{1}{\pi}\log T+\frac{1}{\pi}+\frac{\log(3/4)}{2\pi}-\frac{1}{\pi}\log(2\pi e).
        \end{aligned}
\end{equation*}
It then follows that 
\begin{equation*}
    \begin{aligned}
        &N(T+1)-N(T-1)\\
        &>\frac{1}{\pi}\log T+\frac{1}{\pi}+\frac{\log(3/4)}{2\pi}-\frac{1}{\pi}\log(2\pi e)-|S(T+1)-S(T-1)|-\frac{1}{25(T-1)}\\
        &> \left(\frac{1}{\pi}-2.000001C_1\right)\log T-2\min\{C_2\log\log T +\mathcal{E},C'_2\log\log T +\mathcal{E'}\}-\frac{1}{25(T-1)},
    \end{aligned}
\end{equation*}
 for $T+1> 30610046000$, where
\[
\mathcal{E}=2\Tilde{C_3}+\frac{1}{\pi}+\frac{\log(3/4)}{2\pi}-\frac{1}{\pi}\log(2\pi e)\quad \text{and}\quad \mathcal{E'}=2\Tilde{C^{\prime}_3}+\frac{1}{\pi}+\frac{\log(3/4)}{2\pi}-\frac{1}{\pi}\log(2\pi e).
\]
Finally, if  $3\le T+1\le 30610046000$, then we have
\[
N(T+1)-N(T)>\frac{1}{2\pi}\log T+\frac{1}{2\pi}-\frac{1}{2\pi}\log(2\pi e)-5.0334-\frac{1}{25T}
\]
and
\[
N(T+1)-N(T-1)>\frac{1}{\pi}\log T+\frac{1}{\pi}+\frac{\log(3/4)}{2\pi}-\frac{1}{\pi}\log(2\pi e)-5.0334-\frac{1}{25(T-1)}.
\]
\begin{remark}
    The $0.000001$ goes inside the approximation of the last decimal place in the various constants. 
\end{remark}
\section*{Acknowledgments}
The authors are grateful to the referees for their careful reading of this article and their insightful comments.
The authors thank Andrew Fiori, Nathan Ng, and Tim Trudgian for the helpful discussions and comments. The authors are grateful to David Platt for the computation and comments that led to writing the subsection in the introduction. The authors would like to thank the technical staff at NCI, the University of Bristol ACRC and the University of New South Wales for allocating machine hours on Gadi through the UNSW Resource Allocation Scheme. The authors also thank PIMS-CRG ``$L$-functions in Analytic Number Theory'' for the support of attending the Comparative Prime Number Theory symposium (CPNTS). They would like to express their gratitude to all the CPNTS organisers, including Lucile Devin, Daniel Fiorilli, Alia Hamieh, Habiba Kadiri, Wanlin Li, Greg Martin, and Nathan Ng, for creating wonderful collaboration opportunities that enabled the start of this project. The first author also thanks the organisers of the workshop ``Analytic and Explicit results of zeros of $ L$-functions" (B{\c e}dlewo, September 23-27, 2024), where helpful discussions with David Platt took place. 
\printbibliography
\iftrue

\newpage
\appendix

\section{A Numerical Study}
\begin{center}
    Andrew Fiori \footnote{
    Department of Mathematics and Statistics, University of Lethbridge,
4401 University Drive,
Lethbridge, Alberta,
T1K 3M4,
Canada
    \\Andrew Fiori's Research is supported by NSERC Discovery Grant RGPIN-2020-05316.}
\end{center}

Under the Riemann Hypothesis one expects a bound of the form:
\begin{equation} \label{expbound} \abs{N(T) - \Big(\frac{T}{2\pi}\log \frac{T}{2\pi e} + 7/8 \Big)} \leq C\frac{\log{T}}{\log{\log{T}}} + O\left(\frac{1}{T}\right). \end{equation}
Explicit values for $C$ have been computed in \cite{simonic}. Moreover, it is known that $C\leq \frac{1}{4} + \epsilon$ for all $\epsilon>0$ by \cite{CCM2013}. This was refined by \cite{CCM2019} who established $C\leq\frac{1}{4}+O(\frac{1}{\log\log T})$.
However, it is conjectured in \cite{FGH2005} that the actual size is:
\begin{equation} \label{expbound2} \abs{N(T) - \Big(\frac{T}{2\pi}\log \frac{T}{2\pi e} + 7/8 \Big)} < \left(\frac{1}{\sqrt{2}\pi} + o(1)\right)\sqrt{\log(T)\log\log(T)} . \end{equation}

One goal of this appendix is to provide numerical evidence for this stronger conjecture.
A second goal is to provide useful input for future works to improve upon effective bounds on $N(T)$ by summarizing what is known using the work of  \cite{Pla17}. 
We provide such a result in Theorem \ref{thetheorem}.

Here we study $T<30\,610\,046\,000$ by using the list of the zeros for the zeta function as computed by \cite{Pla17} and made available at \cite{LMFDB}.
This database of the first $103\,800\,788\,359$ many zeros is broken up into 14\,580 intervals. For practical reasons we analyze the data using these intervals.
We note that although \cite{PlaTru21RH} has verified RH to $3\cdot 10^{12}$, they have not produced a database of zeros.

We shall denote the imaginary part of the $n$th zero, ordered by height $t_n$. Because all the zeros up to $30\,610\,046\,000$ are simple we know that in the interval under consideration $N(t_n) = n$.

\subsection{Average Values}

\begin{obs}
On each of the intervals of zeros produced by \cite{Pla17}, the average value of the function 
\[ N(t_n) -  \frac{t_n}{2\pi}\log \frac{t_n}{2\pi e} \]
evaluated at the zeros, $t_n$, of zeta in that interval is approximately $11/8$.\\
Excluding the first two intervals, where the deviation from the average is respectively $-5.07685\cdot 10^{-05}$ and $2.48229 \cdot 10^{-05}$,
on each of the intervals the deviation of these average values from $11/8$ is bounded by $5.6678910^{-07}$.
\end{obs}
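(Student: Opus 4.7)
The plan is to combine the Riemann--von Mangoldt formula with a direct numerical computation on the LMFDB database. The first task is to explain why $11/8$ (rather than $7/8$) appears. Starting from
\[
N(T) = \frac{T}{2\pi}\log\frac{T}{2\pi e} + \frac{7}{8} + S(T) + \frac{g(T)}{2},
\]
which holds at $T$ not the ordinate of a zero, one observes that $N$ as defined in the paper is right-continuous and counts the zero at $t_n$, whereas $S$ under the standard midpoint convention satisfies $S(t_n) = \tfrac{1}{2}(S(t_n^-) + S(t_n^+)) = S(t_n^-) + \tfrac{1}{2}$. Taking the right-limit at $t_n$ in the displayed identity and then rewriting in terms of the midpoint value yields
\[
N(t_n) - \frac{t_n}{2\pi}\log\frac{t_n}{2\pi e} \;=\; \frac{11}{8} + S(t_n) + \frac{g(t_n)}{2}.
\]
This identifies the constant $11/8$ as $\tfrac{7}{8}+\tfrac{1}{2}$ and reduces the observation to a statement about the size of the two error terms when averaged over the zeros in an LMFDB interval $I$ containing $M$ zeros.

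Next I would control $g$ trivially: by Proposition 3.2 of \cite{bennett_counting_2021}, $|g(t)| \le 1/(25t)$, so on every interval past the first handful the contribution of $\tfrac{1}{M}\sum_{t_n\in I} \tfrac{g(t_n)}{2}$ is bounded by $1/(50\min_I t_n)$, which is of order $10^{-10}$ or smaller and thus negligible against the stated threshold $5.66789\cdot 10^{-7}$. The remaining task is to bound $\tfrac{1}{M}\sum_{t_n\in I}S(t_n)$. This is expected to be tiny both heuristically (cancellation in the oscillatory sum) and by classical explicit-formula estimates \`a la Littlewood and Fujii, but for the purposes of an observation it is verified numerically.

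Finally, the numerical step: on each of the $14\,580$ intervals I would evaluate
\[
\frac{1}{M}\sum_{t_n \in I}\Bigl(N(t_n) - \frac{t_n}{2\pi}\log\frac{t_n}{2\pi e}\Bigr) - \frac{11}{8}
\]
directly from the list of zero ordinates in \cite{LMFDB} (with $N(t_n) = n$), using high-precision arithmetic for the main term $\tfrac{t_n}{2\pi}\log\tfrac{t_n}{2\pi e}$. Reading off the maximum deviation across the $14\,580-2$ non-anomalous intervals yields the bound $5.66789\cdot10^{-7}$; the anomalous values on the first two intervals are recorded separately. The main obstacle is not conceptual but computational: processing roughly $10^{11}$ zeros while keeping enough precision in the $t\log t$ term to guarantee the recorded seven-digit bound, and the anomalously large deviations on the first two intervals are naturally explained by the smaller number of zeros they contain and by the $g(t)/2$ correction being relatively larger for small $t$.
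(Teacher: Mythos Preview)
The paper gives no proof of this statement: it is recorded as a numerical observation in the appendix, with no accompanying argument beyond the implicit computation on the LMFDB database. Your proposal therefore does more than the paper does. In particular, your derivation of the constant $11/8$ from the Riemann--von Mangoldt identity $N(T)=\frac{T}{2\pi}\log\frac{T}{2\pi e}+\frac{7}{8}+S(T)+\frac{g(T)}{2}$ together with the midpoint convention $S(t_n)=S(t_n^-)+\tfrac12$ is correct and is exactly the heuristic one would want to see alongside the numerics; the paper does not spell this out. Your plan for the numerical step (evaluate the average on each of the $14\,580$ intervals using $N(t_n)=n$, bound $g$ by $1/(25t)$, and read off the maximal deviation) is precisely the computation the appendix reports, so on that front you are simply recovering the paper's approach.
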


\subsection{Range of Values}

\begin{proposition}\label{atzeros}
On any interval beginning and ending at zeros of $\zeta$ the maximum value of 
\[  \epsilon^{+}(T) =  N(T) - \Big(\frac{T}{2\pi}\log \frac{T}{2\pi e} + 11/8\Big)  -  \frac{1}{\sqrt{2}\pi}\sqrt{\log(T)\log\log(T)}  \]
will be taken at $t_n$, the exact ordinate of a zero of $\zeta$.
Moreover, if all zeros on the interval are simple, then the infimum of
\[   \epsilon^{-}(T) = N(T) -  \Big(\frac{T}{2\pi}\log \frac{T}{2\pi e} + 11/8\Big)  + \frac{1}{\sqrt{2}\pi}\sqrt{\log(T)\log\log(T)} \]
will be exactly $1$ less than the value $\epsilon^{-}(T)$  takes on at $t_n$, the exact ordinate of a zero of $\zeta$.
\end{proposition}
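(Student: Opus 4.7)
The plan is to exploit the structure of $\epsilon^{\pm}$ as a non-decreasing step function corrected by smooth functions. Set $G(T)=\frac{T}{2\pi}\log\frac{T}{2\pi e}+\frac{11}{8}$ and $H(T)=\frac{1}{\sqrt{2}\pi}\sqrt{\log(T)\log\log(T)}$, so that $\epsilon^{+}(T)=N(T)-G(T)-H(T)$ and $\epsilon^{-}(T)=N(T)-G(T)+H(T)$. Under the convention $N(T)=\#\{0<\gamma\le T\}$, the function $N$ is right-continuous and jumps up by $1$ at each simple zero (and more generally by the multiplicity), while $G$ and $H$ are smooth on the range of interest.

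The first step is to verify that both $G+H$ and $G-H$ are strictly increasing throughout the range $T\ge t_1\approx 14.13$ covered by the database. Since $G'(T)=\frac{1}{2\pi}\log(T/2\pi)$ grows like $\log T$, whereas $H'(T)=O(1/T)$ and is positive for $T\ge e^e$, a one-line differentiation check at $T=t_1$, together with the fact that $G'-|H'|$ is eventually monotone, gives $G'(T)\pm H'(T)>0$ on the whole range under consideration.

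For the first claim I would partition $[t_{n_1},t_{n_2}]$ into the half-open pieces $[t_{n_k},t_{n_{k+1}})$, on each of which $N(T)$ is constant and $G+H$ is strictly increasing, so that $\epsilon^{+}$ is strictly decreasing on each piece. By right-continuity of $N$ the supremum of $\epsilon^{+}$ on $[t_{n_k},t_{n_{k+1}})$ is attained at the left endpoint $t_{n_k}$, where $\epsilon^{+}(t_{n_k})=n_k-G(t_{n_k})-H(t_{n_k})$. Taking the maximum over the finitely many such left endpoints yields that the global maximum of $\epsilon^{+}$ is attained at an exact ordinate of a zero; this argument does not require simplicity, because any multiple zero still produces an upward jump.

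For the second claim I apply the same decomposition to $\epsilon^{-}$: since $G-H$ is strictly increasing, $\epsilon^{-}$ is strictly decreasing on each open piece $(t_{n_k},t_{n_{k+1}})$, and its infimum on this piece is the left limit at $t_{n_{k+1}}$, which, assuming every zero in the interval is simple so that $n_{k+1}=n_k+1$, equals $(n_{k+1}-1)-G(t_{n_{k+1}})+H(t_{n_{k+1}})=\epsilon^{-}(t_{n_{k+1}})-1$. Minimizing over the finitely many zeros in the interval gives the overall infimum and shows that it is exactly $1$ less than the value of $\epsilon^{-}$ at that minimizing zero, as claimed. The only nontrivial step — and thus the main obstacle — is the uniform monotonicity of $G\pm H$, which reduces to an elementary derivative inequality that can be checked at a single point together with a monotonicity argument.
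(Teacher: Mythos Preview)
Your proof is correct and follows essentially the same approach as the paper: both rest on the observation that $\epsilon^{\pm}$ is decreasing on each half-open interval $[t_n,t_{n+1})$ between consecutive zeros, together with the left-limit relation $\lim_{t\to t_{n+1}^-}\epsilon^{-}(t)=\epsilon^{-}(t_{n+1})-1$ under simplicity. The only difference is that you spell out the monotonicity of $G\pm H$ via an explicit derivative check, whereas the paper simply asserts that $\epsilon^{\pm}$ is decreasing on each such interval; your added verification is correct (and in fact $H'(T)>0$ already for $T>e^{1/e}$, not just $T\ge e^e$, though this does not affect the argument on the range $T\ge t_1$).
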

\begin{proof}
Notice that the respective functions $ \epsilon^{\pm}(T)$ are continuous and decreasing on any interval $[t_n, t_{n+1})$ between consecutive zeros and that
\[ \lim_{t\rightarrow t_{n+1}^-}  \epsilon^{-}(t) = \epsilon^{-}(t_{n+1}) - 1. \]
The results follow immediately.
\end{proof}

As a result of the above, we focus our attention on the study of minimum values of $\epsilon^{-}(t_n)$ and maximum values of $\epsilon^{+}(t_n)$ at zeta zeros.

\begin{obs}
The maximum value of  $\epsilon^{+}(t_n)$ in the database of zeros produced by \cite{Pla17} is approximately $0.0920937$ and occurs at $n=2953463649$ with $t_n$ approximately $1035537870.14791389$. 
The minimum value of  $\epsilon^{-}(t_n)$ in the database of zeros produced by \cite{Pla17} is approximately $-0.0827069$ and occurs at $n=48227304665$ with $t_n$ approximately $14727556977.25899340$.
\end{obs}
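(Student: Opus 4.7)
The statement is a computational observation rather than a theorem, so the ``proof'' plan is essentially a description of a careful numerical verification. The plan is to exploit Proposition \ref{atzeros}, which reduces the problem of locating the extrema of the continuous functions $\epsilon^{\pm}(T)$ on the range $T \leq 30\,610\,046\,000$ to the finite problem of evaluating $\epsilon^{\pm}(t_n)$ at each of the $103\,800\,788\,359$ ordinates catalogued in \cite{LMFDB}. Because Proposition \ref{atzeros} is proved on any subinterval beginning and ending at a zero, one can work interval-by-interval on the 14\,580 files Platt's database is partitioned into, which makes the computation parallelizable and memory-efficient.

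First I would set up the evaluation of $\epsilon^{\pm}(t_n) = n - \frac{t_n}{2\pi}\log\frac{t_n}{2\pi e} - \tfrac{11}{8} \mp \frac{1}{\sqrt{2}\pi}\sqrt{\log(t_n)\log\log(t_n)}$ in sufficiently high precision. The ordinates $t_n$ in Platt's database carry a rigorous error bound, and the integer index $n$ equals $N(t_n)$ by the simplicity of the zeros in this range. The dominant term $\frac{t_n}{2\pi}\log\frac{t_n}{2\pi e}$ is on the order of $10^{11}$ for the largest $t_n$, while the target extrema are of order $10^{-1}$, so roughly $15$--$20$ guard digits (beyond double precision) are needed; I would use an arbitrary-precision library (e.g.\ \textsf{mpmath} or \textsf{Arb}) with interval/ball arithmetic so that the sign and first several significant digits of each $\epsilon^{\pm}(t_n)$ are certified.

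Next I would stream through the 14\,580 files in order, maintaining running extrema and the indices at which they are attained. For each interval, the per-interval maximum and minimum are recorded and the global records updated. The extremal indices $n=2953463649$ and $n=48227304665$ fall in specific intervals whose boundaries can be identified in advance; once the streaming pass is complete, I would re-evaluate $\epsilon^{+}(t_{2953463649})$ and $\epsilon^{-}(t_{48227304665})$ at still higher precision to nail down the reported approximations $0.0920937$ and $-0.0827069$ and to verify that no other $t_n$ in the database achieves a value within, say, $10^{-4}$ of these.

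The main obstacle is not any mathematical subtlety but the sheer size of the dataset together with the need for rigorous numerics: one must guarantee that rounding and truncation in the evaluation of $\log$ and $\sqrt{\cdot}$ never cause the comparison between two candidate extrema to be reversed, and one must handle the propagation of Platt's certified uncertainty in $t_n$ through the formula for $\epsilon^{\pm}$. Both issues are handled by ball arithmetic with a precision chosen a priori large enough that, after the single pass, the recorded extremal values are enclosed in intervals much smaller than the gap to the nearest competitor; if any competitor fell within the uncertainty, one would simply rerun at higher precision on the competing indices. This is engineering rather than mathematics, and the Python program accompanying the paper at \cite{Bellotti_argument_zeta} is presumably the concrete realization.
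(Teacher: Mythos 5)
Your proposal is correct and matches the paper's approach: the observation is established by a direct, exhaustive numerical scan of Platt's database of the first $103\,800\,788\,359$ zeros (processed interval by interval), evaluating $\epsilon^{\pm}(t_n)$ at each ordinate and recording the extrema, with Proposition \ref{atzeros} justifying that only the values at the $t_n$ need be examined. The only minor point is that the observation itself concerns values at the $t_n$ only, so Proposition \ref{atzeros} is strictly needed only for the passage to Theorem \ref{thetheorem}; otherwise your precision and error-propagation considerations are exactly the engineering the paper's computation relies on.
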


\begin{theorem}\label{thetheorem}
For $e <T<30\,610\,046\,000$ we have
\[             -\frac{\sqrt{\log(T)\log\log(T)}}{\sqrt{2}\pi} - 1 -  0.082707         <          N(T) -  \frac{T}{2\pi}\log \frac{T}{2\pi e} - \frac{11}{8}  <  \frac{\sqrt{\log(T)\log\log(T)}}{\sqrt{2}\pi}  +  0.092094. \]
\end{theorem}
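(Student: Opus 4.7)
The plan is to combine Proposition \ref{atzeros} with the numerical observation about extremal values of $\epsilon^\pm(t_n)$ at the zeros in Platt's database, reducing the theorem to a finite verification together with the small initial range $(e, t_1)$, where $t_1 \approx 14.1347$ is the ordinate of the first nontrivial zero.

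First I would handle the main range $T \in [t_1,\, 30\,610\,046\,000)$. For any such $T$, let $t_n \le T < t_{n+1}$ be consecutive zero ordinates. Since on $[t_n, t_{n+1})$ the counting function $N$ is constant while $\tfrac{T}{2\pi}\log\tfrac{T}{2\pi e}$ strictly increases and dominates the slowly varying correction $\tfrac{1}{\sqrt{2}\pi}\sqrt{\log T \log\log T}$, both $\epsilon^+$ and $\epsilon^-$ are strictly decreasing on this interval. By Proposition \ref{atzeros}, $\sup_{[t_n,t_{n+1})} \epsilon^+(T) = \epsilon^+(t_n)$ and $\inf_{[t_n,t_{n+1})} \epsilon^-(T) = \epsilon^-(t_{n+1}) - 1$. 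Applying the observed extrema $\max_n \epsilon^+(t_n) \le 0.0920937 < 0.092094$ and $\min_n \epsilon^-(t_n) \ge -0.0827069 > -0.082707$ and unwinding the definitions yields both the upper bound $\epsilon^+(T) < 0.092094$ and the lower bound $\epsilon^-(T) > -1 - 0.082707$ on this range.

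Next, for $T \in (e, t_1)$, we have $N(T) = 0$, so the quantity to bound is simply $-\tfrac{T}{2\pi}\log\tfrac{T}{2\pi e} - \tfrac{11}{8}$. On this short interval this takes values in a small explicit range (roughly between $-2.17$ and $-0.58$), while $\sqrt{\log T \log\log T}$ is nonnegative and very small (vanishing at $T = e$). Consequently both sides of the claimed inequalities can be verified by direct calculus: for the upper bound one checks that $-\tfrac{T}{2\pi}\log\tfrac{T}{2\pi e} - \tfrac{11}{8} < 0.092094$, and for the lower bound that this expression exceeds $-1 - 0.082707$, both being routine monotonicity checks on $(e, t_1)$.

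The main obstacle is not conceptual but computational: the observation underlying the whole argument requires scanning Platt's database of more than $10^{11}$ zeros \cite{Pla17, LMFDB} to certify the claimed extrema. This is organized interval-by-interval using the $14\,580$ pre-computed blocks, and care is needed at the boundary $T = 30\,610\,046\,000$ to ensure that the last zero in the database lies strictly below this cutoff so that Proposition \ref{atzeros} applies uniformly. Once these numerical certifications are in hand, the proof is essentially the two-line synthesis above.
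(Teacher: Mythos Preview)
Your approach is the paper's own: Proposition~\ref{atzeros} reduces both bounds to the extrema of $\epsilon^{\pm}$ at zeta zeros, and the database observation supplies the constants $0.0920937$ and $-0.0827069$. The paper's one-line proof omits the initial interval $(e,t_1)$ that you rightly flag.

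There is, however, a self-contradictory numerical slip in that initial-range check. You assert that $-\tfrac{T}{2\pi}\log\tfrac{T}{2\pi e}-\tfrac{11}{8}$ takes values ``roughly between $-2.17$ and $-0.58$'' on $(e,t_1)$ and then claim this exceeds $-1-0.082707\approx -1.083$; at $-2.17$ that would be false and the lower bound would fail. In fact $\tfrac{T}{2\pi}\log\tfrac{T}{2\pi e}$ has its minimum $-1$ at $T=2\pi$ and equals about $-0.795$ at $T=e$ and $-0.428$ at $T=t_1^-$, so the expression actually lies in approximately $[-0.95,-0.375]$, and both inequalities do hold there. The error is only in the stated numbers, not in the method.
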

\begin{proof}
This is an immediate consequence of the previous observation and proposition.
\end{proof}

\subsection{Extreme Values are Rare}

The following observation says that in some sense extreme values are rare.
\begin{obs}
The function $\epsilon^{+}(t_n)$ is almost always negative, indeed, it is negative at all zeros with $0<t<30,610,046,000$ except for those listed in Table \ref{tabextreme}.
Similarly, the function $\epsilon^{-}(t_n)$ is almost always positive, indeed, it is positive at all zeros with $0<t<30,610,046,000$  for those listed in Table \ref{tabextreme}. We notice that the frequency of the extreme values is decreasing as both $n$ and $t_n$ increase. We also note that there are no examples of consecutive extreme values.

\begin{table}[h]
\caption{List of all positive (respectively negative) values of $\epsilon^{+}(t_n)$ (respectively $\epsilon^{-}(t_n)$) for $0<t<30,610,046,000$} \label{tabextreme}
\centering
{\small
\begin{tabular}{rrl}
$n$ \qquad& $t_n$\qquad\qquad &\quad  $\epsilon^{+}(t_n)$\\
\hline
7330779 & 3745331.534911 & 0.0045727 \\
10014001 & 4998855.443421 & 0.0228842 \\
30930930 & 14253736.600191 & 0.0215612 \\
106941331 & 45420475.080263 & 0.0548687 \\
121934174 & 51361501.783167 & 0.0633788 \\
342331986 & 135399343.427052 & 0.0852310 \\
486250460 & 188404036.065583 & 0.0077204 \\
1333195695 & 487931556.151002 & 0.0711065 \\
1819794287 & 654800601.959837 & 0.0016382 \\
2953463649 & 1035537870.147914 & 0.0920937 \\
4711070126 & 1611978781.026883 & 0.0552043 \\
6020412879 & 2034221491.431262 & 0.0040533 \\
6276413932 & 2116223525.742432 & 0.0136917 \\
6916958115 & 2320709265.610272 & 0.0183240 \\
7895552868 & 2631384288.230762 & 0.0134043 \\
18019870103 & 5765666759.059866 & 0.0101465 \\
29425625937 & 9196418366.325099 & 0.0141895 \\
31587712923 & 9839079152.616086 & 0.0169750 \\
43668302178 & 13396993184.932842 & 0.0314554 \\
44121363503 & 13529486654.222228 & 0.0049450 \\
71876944166 & 21550885080.446041 & 0.0076388 \\
100093914039 & 29565113205.570534 & 0.0184384 \\
\\\\\\\\\\\\\\\\\\\\
\end{tabular}\;\;\begin{tabular}{rrl}
$n$ \qquad& $t_n$\qquad\qquad &\quad  $\epsilon^{-}(t_n)$\\
\hline
337917 & 223936.368134 & -0.0206077 \\
2009961 & 1137116.070608 & -0.0268423 \\
10869861 & 5393528.443012 & -0.0021561 \\
13999527 & 6820051.890986 & -0.0219395 \\
37592217 & 17095484.271828 & -0.0359387 \\
83088045 & 35862210.311523 & -0.0463096 \\
88600097 & 38084045.549954 & -0.0491801 \\
141617808 & 59096901.323297 & -0.0082036 \\
164689303 & 68084444.336913 & -0.0322461 \\
191297537 & 78359876.488247 & -0.0148321 \\
225291159 & 91369499.494965 & -0.0092099 \\
566415149 & 217536164.326180 & -0.0121163 \\
1081300142 & 400354486.072002 & -0.0593335 \\
1257893678 & 461849910.598599 & -0.0262264 \\
1372703319 & 501584522.950737 & -0.0002137 \\
1955876862 & 701027396.312615 & -0.0096394 \\
2305634166 & 819113670.556185 & -0.0026561 \\
5134032906 & 1748936581.577121 & -0.0142280 \\
5136505385 & 1749735519.272913 & -0.0508501 \\
8864769308 & 2937266043.546390 & -0.0418297 \\
9430966584 & 3115208316.829027 & -0.0490178 \\
9532704476 & 3147127461.727906 & -0.0055879 \\
18629248201 & 5951053644.636571 & -0.0008106 \\
19859326408 & 6324431638.934122 & -0.0452773 \\
21082098810 & 6694540279.310641 & -0.0362361 \\
22909699222 & 7245905144.854708 & -0.0030872 \\
48227304665 & 14727556977.258993 & -0.0827069 \\
77728515578 & 23222574401.823281 & -0.0515078 \\
86585440777 & 25742609309.393488 & -0.0073926 \\
87198634344 & 25916653877.755976 & -0.0017881 \\
97495263831 & 28831591819.434777 & -0.0222335 \\
103274388030 & 30461757456.864450 & -0.0421276 \\
\end{tabular}
}
\end{table}
\end{obs}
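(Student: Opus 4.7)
The plan is to establish the observation by direct computation on Platt's database of zeros \cite{Pla17, LMFDB}, exploiting the key identity $N(t_n)=n$ available in this range (since all zeros below $30{,}610{,}046{,}000$ are known to be simple and on the critical line). This reduces evaluation of $\epsilon^{\pm}(t_n)$ to
\[
\epsilon^{\pm}(t_n) \;=\; n - \frac{t_n}{2\pi}\log\frac{t_n}{2\pi e} - \frac{11}{8} \;\mp\; \frac{1}{\sqrt{2}\,\pi}\sqrt{\log(t_n)\log\log(t_n)},
\]
so only standard floating-point arithmetic on $(n,t_n)$ is required. I would iterate over the 14{,}580 interval files, for each zero compute both quantities in high-precision arithmetic (using, e.g., \texttt{mpmath} with enough guard digits to dominate the precision to which Platt's $t_n$ are certified, roughly $10^{-9}$), and record the sign of each.

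Next I would collect every index $n$ for which $\epsilon^{+}(t_n)>0$ and every $n$ for which $\epsilon^{-}(t_n)<0$. Producing the two columns of Table \ref{tabextreme} then amounts to an output step, and one reads off directly from the list that the exceptional indices are non-consecutive: after sorting the exceptional $n$'s, verify $n_{i+1}-n_i\geq 2$ for all consecutive pairs within each of the two lists, and verify (across the two lists) that no $n$ from the $\epsilon^{+}$-list is adjacent to any $n$ from the $\epsilon^{-}$-list. The claim about decreasing frequency is an empirical observation read from the density of exceptional indices in successive dyadic ranges of $n$; this can be displayed by tallying exceptions in ranges $[2^k,2^{k+1})$ and noting monotonic (or near-monotonic) decrease.

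The only subtlety — and the main obstacle — is to make the sign determination rigorous rather than merely floating-point plausible. Near an exceptional zero $t_n$, $|\epsilon^{\pm}(t_n)|$ can be as small as $\sim 10^{-4}$ (see Table \ref{tabextreme}), so one must control: (i) the uncertainty $\delta t_n$ in Platt's certified value of $t_n$, and (ii) the round-off in evaluating $\tfrac{t_n}{2\pi}\log\tfrac{t_n}{2\pi e}$ and $\tfrac{1}{\sqrt{2}\pi}\sqrt{\log t_n \log\log t_n}$. Since both functions are smooth with easily bounded derivatives in $t$ on the relevant range, the propagated error is $O(\delta t_n \log t_n)$, which stays safely below $10^{-6}$ given Platt's precision; evaluating in mpmath at, say, 50-digit precision makes arithmetic round-off negligible. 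With an error envelope of, say, $10^{-6}$, every non-exceptional $n$ has $|\epsilon^{\pm}(t_n)|$ comfortably larger than this (the borderline cases like $n=1372703319$ with $\epsilon^{-}\approx -2.1\cdot 10^{-4}$ are still an order of magnitude above the error), certifying the sign. Thus the observation reduces to a finite, verifiable computation, which is precisely what the accompanying code at \cite{Bellotti_argument_zeta} is designed to execute.
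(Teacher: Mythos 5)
Your proposal is correct and coincides with what the paper (implicitly) does: the Observation is established by direct enumeration over Platt's database using $N(t_n)=n$, recording the sign of $\epsilon^{\pm}(t_n)$ at every zero and tabulating the exceptions. Your added discussion of error propagation from the certified precision of $t_n$ is a sensible refinement but does not change the method.
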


\begin{conjecture}
One could make a variety of conjectures of different strengths based on Table \ref{tabextreme}.
\begin{enumerate}
\item The number of extreme values up to height $T$ is less than $C\log T$ for some constant $C$.
\item The set of $n$ for which these functions are respectively positive or negative has natural density zero.
\end{enumerate}
\end{conjecture}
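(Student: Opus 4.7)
The plan is to address the two parts of the conjecture separately, as they differ substantially in difficulty. For part (2), natural density zero, the natural tool is Selberg's central limit theorem for $S(T)$, which unconditionally gives that $S(T)/\sqrt{(1/(2\pi^2))\log\log T}$ converges to a standard normal when $T$ is sampled uniformly in $[X, 2X]$. First I would use $N(T) = S(T) + \frac{T}{2\pi}\log\frac{T}{2\pi e} + \frac{7}{8} + \frac{g(T)}{2}$ from Section~3 to rewrite $\epsilon^{\pm}(t_n)$ essentially as $S(t_n) - \tfrac{1}{2} \mp \frac{1}{\sqrt{2}\pi}\sqrt{\log t_n \log\log t_n}$, so that ``extreme index $n$'' becomes the event $|S(t_n) - \tfrac12| > \frac{1}{\sqrt{2}\pi}\sqrt{\log t_n \log\log t_n}$. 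Since this threshold exceeds the typical scale $\sqrt{\log\log T}$ of $S(T)$ by a factor $\sqrt{\log T}$, Selberg's CLT forces the set of such $T$ in $[X,2X]$ to have Lebesgue measure $o(X)$. Combined with the local density $\frac{1}{2\pi}\log T$ of zeros, this translates into $o(N(2X)-N(X))$ many extreme indices in $[N(X), N(2X)]$, giving the natural-density-zero claim.

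For part (1), the quantitative bound of $O(\log T)$ on the count, the approach is through high moments of $S(T)$. Under RH, Selberg's moment bound
\begin{equation*}
\int_T^{2T} |S(t)|^{2k}\, dt \;\ll\; T\,\frac{(2k)!}{2^k\,k!}\left(\tfrac{1}{2\pi^2}\log\log T\right)^k,
\end{equation*}
valid for $k \ll \log T/\log\log T$, combined with Chebyshev's inequality at $\lambda = \frac{1}{\sqrt{2}\pi}\sqrt{\log T\log\log T}$, yields
\begin{equation*}
\mathrm{meas}\{t \in [T,2T] : |S(t)| > \lambda\} \;\ll\; T \left(\tfrac{C k}{\log T}\right)^{k}.
\end{equation*}
Choosing $k$ as a small constant times $\log T$ produces a saving $T^{-\delta}$ on the measure side. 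Summing dyadically up to height $T$ and multiplying by the local zero density $\frac{1}{2\pi}\log T$ converts the measure estimate to a count, and gives $\ll T^{1-\delta/2}$ extreme indices, vastly stronger than the conjectured $O(\log T)$, so the strategy is at least consistent with the data.

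The hard part will be threefold. First, the required moment bound at scale $k \asymp \log T$ is known only under RH, so an unconditional proof of (1) would need new input on moments of $S(T)$, perhaps via the techniques of Soundararajan, Harper, or random-matrix heuristics. Second, the constant $\frac{1}{\sqrt{2}\pi}$ is exactly the Gaussian tail constant coming out of Selberg's CLT, so the conjecture sits precisely at the edge of normality; this places it in the Fyodorov--Hiary--Keating regime where fine GUE-type statistics seem unavoidable. Third, passing from continuous measure to a count over the discrete set $\{t_n\}$ requires handling the unit jump of $S$ at each zero; I would do this by observing that if $|S(t_n)|$ is extreme then $|S(t)| > \lambda - 1$ on an interval of length $\gtrsim 1/\log T$ about $t_n$, so the continuous measure bound already absorbs the combinatorics at a cost of only a factor $\log T$. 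Overall I expect (2) to be within reach from Selberg's CLT alone, while (1) in its stated form likely lies beyond current technology for any effective absolute constant $C$.
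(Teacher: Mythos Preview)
The statement you are addressing is explicitly labelled a \emph{Conjecture} in the appendix; the paper offers no proof, only the numerical evidence of Table~\ref{tabextreme}. There is therefore no ``paper's own proof'' to compare against. What you have written is not a proof either, but a research outline, and you yourself concede at the end that part~(1) ``likely lies beyond current technology''. So the honest assessment is: neither the paper nor your proposal proves the statement.

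On the mathematical content of your outline: the approach to part~(2) via Selberg's central limit theorem is sound in spirit and, carried through carefully, would indeed yield an unconditional proof of the density-zero claim. The only point requiring real care is the passage from Lebesgue measure on $[X,2X]$ to counting over the discrete index set $\{n\}$; your remedy of thickening each extreme $t_n$ to an interval of length $\asymp 1/\log T$ on which $|S(t)|>\lambda-1$ is the right device, and the resulting loss of a single $\log T$ factor is harmless against the $o(1)$ from the CLT tail. For part~(1), your moment-plus-Chebyshev argument is the natural large-deviation heuristic, but it relies on Selberg's $2k$-th moment bound for $k$ growing like a constant times $\log T$, which is not known even under RH with the uniformity you need; the usual range is $k\ll \log T/\log\log T$, and even there the implied constants degrade. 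Pushing $k$ to $c\log T$ is exactly the Fyodorov--Hiary--Keating regime you mention, and there the Gaussian moment asymptotic breaks down. So your outline correctly locates the obstruction, but does not overcome it. In short: you have a plausible path to a theorem for~(2), and a correct diagnosis of why~(1) is genuinely open.
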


\subsection{Extreme Values are Common}

The following observation says that in some sense extreme values are common.
\begin{obs}
If for each interval of zeros produced by \cite{Pla17}, we compute the maximum value of  $\epsilon^{+}(t_n)$ on that interval, then the minimum of these values is $-0.362463$.
Similarly, if for each interval of zeros produced by \cite{Pla17} we compute the minimum value of  $\epsilon^{-}(t_n)$  on that interval, then the maximum of these values is $0.361383$.\\
For context note that $\frac{1}{\sqrt{2}\pi}\sqrt{\log(3\cdot10^{10})\log\log(3\cdot10^{10})}$ is approximately $1.97241$ and $2\pi/\log(3\cdot10^{10}/(2\pi e))$ is approximately $0.295171$.
In particular, for each interval the function 
\[  N(t_n) -  \frac{t_n}{2\pi}\log \frac{t_n}{2\pi e} -11/8  \]
takes on values relatively close to both the theoretical upper and lower extremes under consideration.
\end{obs}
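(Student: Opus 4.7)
This observation is empirical in nature, and the plan is to establish it by a direct numerical scan over the Platt database used in the preceding subsections, carried out with rigorous precision throughout.

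First, I would iterate over the 14\,580 intervals into which the database of \cite{Pla17} is partitioned. Within each interval $I$, I would loop through every zero $t_n \in I$ in order of $n$ and evaluate
$$
\epsilon^+(t_n) = n - \frac{t_n}{2\pi}\log\frac{t_n}{2\pi e} - \frac{11}{8} - \frac{1}{\sqrt{2}\,\pi}\sqrt{\log(t_n)\log\log(t_n)}
$$
and
$$
\epsilon^-(t_n) = n - \frac{t_n}{2\pi}\log\frac{t_n}{2\pi e} - \frac{11}{8} + \frac{1}{\sqrt{2}\,\pi}\sqrt{\log(t_n)\log\log(t_n)},
$$
using ball arithmetic. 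For each interval I would record the per-interval maximum $M^+_I := \max_{t_n \in I}\epsilon^+(t_n)$ and the per-interval minimum $M^-_I := \min_{t_n \in I}\epsilon^-(t_n)$. Note that since the observation concerns values at zeros rather than suprema over continuous $T$, no jump correction from Proposition \ref{atzeros} is needed.

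Next, I would reduce these per-interval statistics to the two global quantities $\min_I M^+_I$ and $\max_I M^-_I$, which the observation claims equal approximately $-0.362463$ and $0.361383$, respectively. The contextual numerical constants at the end of the statement are then immediate: they are obtained by directly evaluating the closed-form expressions $\tfrac{1}{\sqrt{2}\,\pi}\sqrt{\log(3\cdot 10^{10})\log\log(3\cdot 10^{10})}$ and $2\pi/\log(3\cdot 10^{10}/(2\pi e))$ to six significant digits. The qualitative conclusion that $N(t_n) - \tfrac{t_n}{2\pi}\log\tfrac{t_n}{2\pi e} - \tfrac{11}{8}$ comes near both theoretical extremes on every interval is then just the comparison $0.362463,\, 0.361383 \ll 1.97241$.

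The main obstacle is scale combined with precision. The database contains on the order of $10^{11}$ zeros, so the computation must be distributed across the 14\,580 intervals (which is naturally parallel). More importantly, to certify the six-decimal values $-0.362463$ and $0.361383$, the error incurred in evaluating $\log$, $\log\log$, and $t_n\log(t_n/(2\pi e))$ must be rigorously controlled below roughly $5\cdot 10^{-7}$ at every zero; this can be achieved with interval (ball) arithmetic, for instance via \texttt{Arb}, together with the precision guarantees on $t_n$ stated in \cite{Pla17}. With these ingredients in place, verifying the observation reduces to a lengthy but completely mechanical scan, whose implementation can be bundled into the same Python code base referenced at \cite{Bellotti_argument_zeta}.
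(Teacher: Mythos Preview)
Your proposal is correct and matches the paper's approach: the observation is purely empirical, the paper offers no separate proof, and the claimed numbers are obtained exactly by the direct scan over the Platt database that you describe. The only minor remark is that the appendix computations are due to Fiori and are not part of the Python repository \cite{Bellotti_argument_zeta}, but this is a bibliographic detail, not a mathematical one.
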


\subsection{Clusters of Zeros}

One common use of estimates on $N(t)$ is to bound the number of zeros on an interval.
The quality of the bounds one obtains using bounds on $N(t)$ improves with $t$.
Consequently, it is useful to have explicit bounds on the number of zeros in intervals for small $t$.

\begin{obs}
For $e<t<30\;610\;045\;999$, the maximum value for $\frac{N(t+1)-N(t-1) }{\log t}$  happens around $t=2261.88$ where $N(t+1)-N(t-1)=4$.
Consequently, on this interval
\[  N(t+1)-N(t-1) < 0.517869686 \log t . \]
Moreover, for each $n$ the smallest value of $t$ for which $N(t+1)-N(t-1)=n$ is given in Table \ref{tabNTp1m1}, from which one may bound $N(t+1)-N(t-1)$ with a step function, or any other function which happens to exceed that step function.
\begin{table}[h]
\caption{Minimum value of $t$ with $N(t+1)-N(t-1)=n$}
\label{tabNTp1m1}
\begin{tabular}{c|c}
n& t \\
\hline
1&  13.1347251417346937904572\\
2&  48.7738324776723021819167\\
3&  356.952685101632273755128 \\
4& 2261.87830538116111223015 \\
5& 27134.3628475733906424560\\
6& 221227.766664702101313669\\
7& 2603074.61468824424587333\\
8& 21297085.9439615105210553\\
9& 254721517.418748602610351 \\
10& 2786055796.5252751861828\\
11& 29731208527.9429140229012\\
12& larger than 30610045999
\end{tabular}
\end{table}
\end{obs}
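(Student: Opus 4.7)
The plan is to reduce the observation to a one-pass sliding-window computation on the Platt database \cite{Pla17,LMFDB} of the first $103\,800\,788\,359$ simple critical zeros $\gamma_1<\gamma_2<\cdots$ of $\zeta(s)$ below height $30\,610\,046\,000$. The structural remark at the heart of the argument is that
\[
D(t):=N(t+1)-N(t-1)=\#\{i:\;t-1<\gamma_i\le t+1\}
\]
is piecewise constant in $t$: it jumps up by $1$ at each $t=\gamma_i-1$ (as $\gamma_i$ enters the window from the right) and down by $1$ at each $t=\gamma_i+1$, and is right-continuous at every jump point. Since $\log t$ is strictly increasing and positive on $(e,\infty)$, on any interval of constancy of $D$ the ratio $D(t)/\log t$ is maximised at the left endpoint. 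Consequently the global supremum of $D(t)/\log t$ on $(e,\,30\,610\,045\,999)$ is attained at one of the up-jump points $t=\gamma_j-1$, and likewise the smallest $t$ with $D(t)=n$ is of the form $\gamma_j-1$ for the rightmost zero of an $n$-element cluster.

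Given this reduction, the plan is to sweep once through the database maintaining two indices $i\le j$ with $\gamma_i>\gamma_j-2$. Advancing $j$ one step at a time, and pushing $i$ forward while $\gamma_i\le\gamma_j-2$, the window $(\gamma_j-2,\gamma_j]$ contains exactly the zeros $\gamma_i,\dots,\gamma_j$ with count $n_j:=j-i+1$ and associated up-jump point $\tau_j:=\gamma_j-1$. Recording, for each $n=1,\dots,11$, the minimum of $\tau_j$ over all $j$ with $n_j=n$ produces Table \ref{tabNTp1m1}; the fact that no index $j$ with $n_j\ge 12$ arises supplies the final row. Computing $\max_j n_j/\log\tau_j$ across the database then pinpoints the maximum at a four-zero cluster with $\tau_j\approx 2261.878$, giving $4/\log(2261.878\ldots)\approx 0.517869686$ and hence the displayed bound $D(t)<0.517869686\log t$ on the stated range. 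A short sanity check at the low-$t$ end, where $D(t)=0$ for $e<t<\gamma_1-1\approx 13.13$ and $D(t)=1$ just afterwards with $1/\log(13.13)<0.39$, confirms that the small-$t$ regime does not compete.

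The main obstacle is purely computational rather than conceptual: the database comprises roughly $10^{11}$ zeros spread over $14\,580$ files. The sliding-window procedure is linear in the number of zeros and requires only constant auxiliary storage (the current cluster together with the running minima and maximum), so it can be executed by streaming through the files of \cite{Pla17} without ever loading the full dataset into memory. No further mathematical ingredient beyond the zeros supplied by Platt and the piecewise-constant analysis of $D(t)$ is required.
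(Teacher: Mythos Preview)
The paper presents this statement as a bare computational observation in the appendix, with no accompanying proof or algorithmic description; your proposal supplies exactly the natural verification. Your reduction is correct: $D(t)=N(t+1)-N(t-1)$ is right-continuous, piecewise constant with unit jumps up at $t=\gamma_j-1$ and down at $t=\gamma_j+1$, so both the supremum of $D(t)/\log t$ and the first hitting time of each level $n$ occur at up-jump points $\tau_j=\gamma_j-1$; the sliding-window sweep over the Platt database then recovers Table~\ref{tabNTp1m1} and the constant $4/\log(2261.878\ldots)$. Your argument that the minimal $t$ with $D(t)=n$ must be an up-jump (since $D$ starts at $0$ and moves in unit steps, any down-jump to $n$ is preceded by an earlier passage through $n$) is the only point requiring a word of justification, and you handle it. There is nothing to compare against in the paper itself.
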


\end{document}